\documentclass[10pt]{amsart}

% !TEX root = master.tex

%%% Packages %%%

\usepackage{amsmath}
\usepackage{amsfonts}
\usepackage{amssymb}
\usepackage{amsthm,thmtools}
\usepackage{bm}
\usepackage{comment}
\usepackage{epsfig}
\usepackage{psfrag}
\usepackage{mathrsfs}
\usepackage{amscd}
\usepackage{rotating}
\usepackage{lscape}
\usepackage{amsbsy}
\usepackage{verbatim}
\usepackage{moreverb}
\usepackage{sseq}
\usepackage{pdfpages}
\usepackage{enumerate}
\usepackage{float}
\usepackage{stmaryrd}
\restylefloat{table}
\usepackage{tikz-cd}
\usetikzlibrary{decorations.pathmorphing}
\usepackage{url}
\usepackage[utf8]{inputenc}
%\usepackage[style = alphabetic, backend=bibtex, maxbibnames=99]{biblatex}

%remove abstract title
%\usepackage{abstract}
%\renewcommand{\abstractname}{}
%\renewcommand{\absnamepos}{empty}

\usepackage[color = teal!40]{todonotes}
\usepackage{graphicx}
\usepackage{color}
\usepackage[colorlinks]{hyperref}

%% hyperref colors %%
\def\tmp#1#2#3{%
  \definecolor{Hy#1color}{#2}{#3}%
  \hypersetup{#1color=Hy#1color}}
\tmp{file}{HTML}{800006}
\tmp{url}{HTML}{2E7E2A}
\tmp{link}{HTML}{131877}
\tmp{cite} {HTML}{8A0087}
\tmp{menu}{HTML}{727500}
\tmp{run} {HTML}{137776}
\def\tmp#1#2{%
  \colorlet{Hy#1bordercolor}{Hy#1color#2}%
  \hypersetup{#1bordercolor=Hy#1bordercolor}}
\tmp{link}{!60!white}
\tmp{cite}{!60!white}
\tmp{file}{!60!white}
\tmp{url} {!60!white}
\tmp{menu}{!60!white}
\tmp{run} {!60!white}

\usepackage[capitalise]{cleveref}
\numberwithin{equation}{subsection}

%% new mod
\newcommand{\mmod}[1]{ \mathrm{mod}\ #1}

%%% TheoremEnvs %%%

\theoremstyle{plain}
\newtheorem{thm}{Theorem}[subsection]
\newtheorem{cor}[thm]{Corollary}

\newtheorem{prop}[thm]{Proposition}
\newtheorem{lem}[thm]{Lemma}

\theoremstyle{definition}

\newtheorem{defn}[thm]{Definition}
%\declaretheorem[name=Definition,qed={\lower-0.3ex\hbox{$\diamond$}}]{defn}

\newtheorem{rmk}[thm]{Remark}

\newtheorem{quest}{Question}
\newtheorem{ass}{Assumption}

%Lettered theorem
\newtheorem{alphthm}{Theorem}

%Lettered corollary
\newtheorem{alphcor}{Corollary}

%Lettered conjecture

%LetteredQuest
\newtheorem{alphquest}{Question}

%LetteredQuest

%Section numbering
\theoremstyle{plain}
\newtheorem{sthm}{Theorem}[section]

\newtheorem{slem}[sthm]{Lemma}

\theoremstyle{definition}
\newtheorem{sdefn}[sthm]{Definition}
\newtheorem{srmk}[sthm]{Remark}
\newtheorem{sex}[sthm]{Example}

%no numbering
\newtheorem*{thm*}{Theorem}
\newtheorem*{cor*}{Corollary}
\newtheorem*{conj*}{Conjecture}
\newtheorem*{prop*}{Proposition}
\newtheorem*{lem*}{Lemma}
\newtheorem*{pred*}{Prediction}
\newtheorem*{defn*}{Definition}
\newtheorem*{guess*}{Guess}
\newtheorem*{fact*}{Fact}
\newtheorem*{notation*}{Notation}
\newtheorem*{note*}{Note}
\newtheorem*{ex*}{Example}
\newtheorem*{rmk*}{Remark}
\newtheorem*{warn*}{Warning}
\newtheorem*{claim*}{Claim}
\newtheorem*{goal*}{Goal}
\newtheorem*{convention*}{Convention}
\newtheorem*{quest*}{Question}
\newtheorem*{ass*}{Assumptions}

%%% MathOperators and commands %%%

\newcommand{\mK}{\mathrm{K}}

\newcommand{\stab}{ {\mathcal{O}^\times}}
\newcommand{\estab}{\mathbb{G}}

\newcommand{\mur}{\mathrm{MU}_\mathbb{R}}
\newcommand{\bpr}{\mathrm{BP}_\R}
\newcommand{\BPCfour}{\mathrm{BP}^{( (C_4) )}}
\newcommand{\MUCn}{\mathrm{MU}^{( (C_{2^n}) )}}
\newcommand{\BPCn}{\mathrm{BP}^{( (C_{2^n}) )}}
\newcommand{\ECnm}{E_{C_{2^n}}(m)}
\newcommand{\bpcfourone}{\mathrm{BP}^{( (C_4) )}\langle 1\rangle}
\newcommand{\bpcfourtwo}{\mathrm{BP}^{( (C_4) )}\langle 2\rangle}

\newcommand{\tgen}{\overline{t}}

\DeclareMathOperator{\spectra}{Sp}
\DeclareMathOperator{\cofib}{cofib}

\DeclareMathOperator{\slicess}{SliceSS}
\DeclareMathOperator{\hfpss}{HFPSS}
\DeclareMathOperator{\tatess}{TateSS}

\DeclareMathOperator{\res}{res}
\DeclareMathOperator{\trans}{tr}

\makeatletter
\let\c@equation=\c@thm
\let\c@lem=\c@thm
\let\c@cor=\c@thm
\let\c@conj=\c@thm
\let\c@prop=\c@thm
\let\c@lem=\c@thm
\let\c@defn=\c@thm
\let\c@notation=\c@thm
\let\c@note=\c@thm
\let\c@exmp=\c@thm
\let\c@ex=\c@thm
\let\c@exmps=\c@thm
\let\c@rem=\c@thm
\let\c@warn=\c@thm
\let\c@claim=\c@thm
\let\c@convention=\c@thm
\let\c@conventions=\c@thm
\let\c@quest=\c@thm
\let\c@thmintro=\c@thm
\let\c@conjintro=\c@thm
\let\c@thmbig=\c@thm
\let\c@conbig=\c@thm
\let\c@facts=\c@thm
\let\c@definition=\c@thm
\let\c@proposition=\c@thm
\let\c@lemma=\c@thm
\let\c@remark=\c@thm
\let\c@conjecture=\c@thm
\let\c@theorem=\c@thm
\let\c@prediction=\c@thm
\let\c@guess=\c@thm
\makeatother

\newcommand{\N}{\mathbb{N}}
\newcommand{\ox}{\otimes}

\DeclareMathOperator{\Mod}{Mod}
\DeclareMathOperator{\map}{map}

\DeclareMathOperator{\End}{End}

\DeclareMathOperator{\Gal}{Gal}
\newcommand{\smsh}{\wedge}

\DeclareMathOperator{\Aut}{Aut}
\DeclareMathOperator*\colim{colim}

\newcommand{\Z}{\mathbb{Z}}

\newcommand{\R}{\mathbb{R}}
\newcommand{\F}{\mathbb{F}}
\newcommand{\E}{\mathbb{E}}

\newcommand{\sphere}{\mathbb{S}}

%%%%%%

   %pretend its Greek
%\newcommand{\ep}{\epsilon}

%\newcommand{\xi}{\xi}

%\newcommand{\mu}{\mu}
%\newcommand{\nu}{\nu}
%\newcommand{\th}{\theta}

%\newcommand{\pi}{\pi}

%\newcommand{\TH}{\Theta}

% blackboardbold letters  small b then capital letter

% arrows
\renewcommand{\to}{\longrightarrow}

%\pagestyle{empty}
%%%%%

\newcommand{\lieg}{\mathfrak{g}}

\newcommand{\cH}{\mathcal{H}}

\newcommand{\cG}{\mathcal{G}}

\newcommand{\cO}{\mathcal{O}}

%\newcommand{\sC}{\mathsrc{C}}

%\addbibresource{mduals.bib}

\author[Moreno Del Angel]{Juan C. Moreno Del Angel}

\title[Duals of higher real $K$-theories]{Duals of higher real $K$-theories at $p=2$}
%\date{\today}

\begin{document}

\begin{abstract}
We study $\mK(h)$-local Spanier-Whitehead duality for $C_{2^n}$-equivariant Lubin-Tate spectra, 
$E_h$, at the prime $2$ and heights $h$ divisible by $2^{n-1}$. We determine a $C_{2^n}$-equivariant
equivalence $DE_h\simeq\Sigma^{-V_h} E_h$, for an explicit
$C_{2^n}$-representation, $V_h$.
We then study the $\mathrm{RO}(C_{2^n})$-periodicities of 
$E_h$ at some low heights. With these ingredients, we determine the self-duality of some higher 
real $K$-theories up to a specified suspension shift, at some low-heights. In particular,
we show that $DE_4^{hC_8}\simeq \Sigma^{112}E_4^{hC_8}$. 
\end{abstract}
\maketitle
\setcounter{tocdepth}{1}
\tableofcontents

% !TEX root = master.tex

\section{Introduction}\label{intro}

The purpose of this article is to study duality for certain higher chromatic analogs of 
topological $K$-theories. The story of $K$-theory is now classical and has resulted in 
many applications ranging from Adams' solution to the vector fields on spheres problem \cite{AdamsVectorfieldsonspheres}, 
to the classification of free fermion phases in condensed-matter physics \cite{KitaevPeriodicTable}. 
Climbing up one wrung in the chromatic ladder leads to theories like topological modular forms, which 
detects complex patterns in the stable homotopy groups of spheres \cite{BehrensMahowaldQuigley,BelmontShimomura},
and has a conjectural geometric description in terms of 2-dimensional supersymmetric field theories \cite{stolzteichner}. 
The higher versions of real $K$-theory that we will study are at the moment somewhat mysterious,
though steady progress is being made towards understanding them. For some examples of recent work, see
\cite{bbhs,picardHLS,PrasitHoodeo,CarrickWoodDefect,HoodHuMorgan}.

The duality we are interested in is $\mK(h)$-local Spanier-Whitehead duality. Working at the 
prime $2$ and height $1$, Hahn-Mitchell identify $2$-completed $\mathrm{KO}$ as self-dual up 
to a desuspenson \cite{HahnMitchellDKO}. Their proof makes use of the Adams-Baird-Ravenel fiber sequence 
\cite{Adams1974,Ravenel1984}, which consequently takes the form
\begin{equation*}
    \label{KOfiberseq}
    D(\mathrm{KO}_2)\to L_{\mK(1)}\sphere\to \mathrm{KO}_2.
\end{equation*}
The takeaway is that every element in the $\mK(1)$-local part of the stable homotopy groups of spheres is 
either detected by the unit map to $\mathrm{KO}_2$ or its dual. 

The analogous picture at height $2$ has recently been made clear thanks to much hard work.
In this case, one has a spectral sequence which collapses at a finite page and
computes the homotopy groups of a spectrum closely related to $L_{\mK(2)}\sphere$ \cite{ghmrResolution,bobkovagoerss,finiteresolutions}. 
The $E_1$-page is given by homotopy groups of variants of $\mathrm{TMF}$, and
the edge homomorphism is the dual of the unit map of $L_{\mK(2)}\mathrm{TMF}$. 
The dual in this case was identified by Behrens at the prime $3$ \cite{BehrensModularK2} and 
Bobkova at the prime $2$ \cite{Bobkova_dual}. 
In general, the theory suggests that the higher versions of real $K$-theory and their 
duals form building blocks for $L_{\mK(h)}\sphere$. 

We now introduce the higher versions of real $K$-theory. Fix a perfect field $k$ of characteristic 
$p>0$ and a formal group law $\Gamma$ of height $h$ over $k$. 
For every pair $(k,\Gamma)$, there is a $\mK(h)$-local commutative ring spectrum, 
the Lubin-Tate spectrum, $E_h$, which comes equipped with an action by the
(extended) Morava stabilizer group, $\estab_h = \Aut(k,\Gamma)$ 
\cite{rezkHopkinsMiller, Goerss_Hopkins_2004, LurieEllipticII}. We let $\mathcal{O}^\times_h$ 
denote the subgroup of $\estab_h$ consisting of automorphisms leaving $k$ fixed, also 
called the (small) stabilizer group.
For finite subgroups $G\leq\estab_h$, one can 
form the homotopy fixed point spectra, $E_h^{hG}$. These are the \emph{higher real $K$-theories}. 

We will work almost exclusively at the prime $2$ and heights divisible by $2^{n-1}$. 
Under these assumptions there is a cyclic group of order $2^n$, 
$C_{2^n}\leq\mathcal{O}^\times_h$\cite{HewettFiniteSubs}, so that $E_h$ has an action by $C_{2^n}$. 
This $C_{2^n}$-spectrum is our central object of interest. 
At height $1$, we recover the classical real $K$-theory,
so by \cite{HahnMitchellDKO}, \[DE_1^{hC_2}\simeq 
\Sigma^{-1}E_1^{hC_2}.\] 
At height $2$, $E_2^{hC_4}$ is related to a version of $\mathrm{TMF}$ with 
level structure, $\mathrm{TMF}_0(5)$. Then from Bobkova's Theorem $1$ in \cite{Bobkova_dual}, we can deduce that
\[DE_2^{hC_4}\simeq\Sigma^{44}E_2^{hC_4}.\]
One of our main results (\cref{De4hC8}) identifies the dual of the next higher real $K$-theory in 
this pattern, $E_4^{hC_8}$, which is closely related to the detecting spectrum, $\Omega$, of 
Hill-Hopkins-Ravenel \cite{hhr16}. Before stating it, we describe our method.

For any finite $G\leq\estab_h$, work of Beaudry-Goerss-Hopkins-Stojanoska \cite{dualizingspheres} 
together with a theorem of Clausen \cite{ClausenAduality}
allows us to identify the $G$-equivariant dual of $E_h$ as a suspension
by a $G$-representation sphere. Explicitly, if  $V_h$ denotes the $p$-adic Lie algebra of $\estab_h$ 
viewed as a $G$-representation via the adjoint action, then there is a $G$-equivariant equivalence
\[DE_h\simeq_G \Sigma^{-V_h}E_h.\]
In \cref{duality}
we review some of the theory laid out in \cite{dualizingspheres} and in \cref{therep} 
we identify the representation sphere in the case $h$ is divisible by $2^{n-1}$ and $G=C_{2^n}$. 
This leads to the first main result of this paper, 
which was known to the authors of \cite{dualizingspheres}, but not published.
The author is grateful to them for their permission to record it here.
See \cref{thedualizingrep} for a more explicit statement.

\begin{alphthm}[\cref{thedualizingrep}]
\label{therepthm}
Let $\estab_h$ be the Morava stabilizer group at the prime $2$ and height $h=2^{n-1}m$, for $m$ odd. 
Let $V_h$ be the $p$-adic lie algebra of $\estab_h$, viewed as 
a $C_{2^n}$-representation by the adjoint action of $C_{2^n}\leq\mathcal{O}^\times_h\leq \estab_h$. 
Then $V_h$ is isomorphic to a direct sum of $h^2/2^{n-1}$ copies of the regular 
representation of $C_{2^n}/C_2\cong C_{2^{n-1}}$, viewed as a $C_{2^n}$-representation 
by restriction along the quotient map.
\end{alphthm}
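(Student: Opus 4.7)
The plan is to prove the identification via a character calculation, exploiting that $\bQ_2[C_{2^n}]$ is semisimple with simple factors $\bQ_2(\zeta_d)$ for $d\mid 2^n$, each of which has a distinct character, so that two $\bQ_2$-representations of $C_{2^n}$ are isomorphic if and only if their characters agree as class functions on $C_{2^n}$. First I would identify $V_h$, as a $C_{2^n}$-representation, with the central division algebra $D$ of invariant $1/h$ over $\bQ_2$ (of $\bQ_2$-dimension $h^2$), on which $C_{2^n}\leq\mathcal{O}^\times_h\subseteq D^\times$ acts by conjugation. In particular the unique element of order $2$ in $C_{2^n}$ must be $-1\in Z(D^\times)=\bQ_2^\times$, since $D$ is a division algebra.

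The target character is immediate: the regular representation of $C_{2^{n-1}}$ has character $2^{n-1}$ at the identity and $0$ elsewhere, so its pullback along $C_{2^n}\twoheadrightarrow C_{2^{n-1}}=C_{2^n}/C_2$ takes value $2^{n-1}$ on $\{\pm 1\}\subseteq C_{2^n}$ and $0$ elsewhere; multiplying by $h^2/2^{n-1}$ gives $\chi_{\mathrm{tgt}}(g)=h^2$ for $g\in\{\pm 1\}$ and $0$ otherwise. For the character $\chi_{V_h}$ of the adjoint representation, I would extend scalars to $\overline{\bQ}_2$, so that $D\otimes_{\bQ_2}\overline{\bQ}_2\cong M_h(\overline{\bQ}_2)$ and $\mathrm{Ad}(g)$ becomes matrix conjugation. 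A direct eigenvalue calculation then yields the standard identity
\[
\chi_{V_h}(g)\;=\;\mathrm{tr}_{\mathrm{rd}}(g)\cdot\mathrm{tr}_{\mathrm{rd}}(g^{-1}),
\]
where $\mathrm{tr}_{\mathrm{rd}}$ is the reduced trace on $D$. For $g\in C_{2^n}$ of order $2^j\geq 4$, the minimal polynomial of $g$ over $\bQ_2$ is the irreducible cyclotomic polynomial $\Phi_{2^j}$ of degree $2^{j-1}$; hence $\bQ_2(g)$ is a field, $D$ is free over it of rank $h^2/2^{j-1}$, and the reduced characteristic polynomial of $g$ equals $\Phi_{2^j}(x)^{h/2^{j-1}}$. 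Summing eigenvalues then gives $\mathrm{tr}_{\mathrm{rd}}(g)=(h/2^{j-1})\cdot\mu(2^j)=0$. For $g=\pm 1$, the element is central, so $\mathrm{Ad}(g)$ is the identity and contributes $h^2$. These values match $\chi_{\mathrm{tgt}}$ term by term.

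The main technical point is the identity $P_g(x)=m_g(x)^{h/d}$ relating the reduced characteristic polynomial to the minimal polynomial of degree $d$. I would derive it by comparing $P_g(x)^h=\det_{\bQ_2}(x-L_g\mid D)$ (the standard relation for central simple algebras of degree $h$, where $L_g$ denotes left multiplication by $g$) with $m_g(x)^{h^2/d}$ (valid because $D$ is free of rank $h^2/d$ over the subfield $\bQ_2(g)$); the irreducibility of $m_g=\Phi_{2^j}$ over $\bQ_2$ is essential. A secondary bookkeeping matter is interpreting the $\bQ_2$-representation $V_h$ as a rational (or real) representation, as required to form representation spheres in the sense of \cref{duality}; since the character is $\bQ$-valued this descent is unobstructed but should be recorded explicitly.
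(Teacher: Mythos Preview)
Your argument is correct and takes a genuinely different route from the paper. The paper proceeds by choosing an explicit $L$-basis $\{e_\phi\}_{\phi\in\Gal(L/\bQ_2)}$ of $\mathbb{D}_h$ via Noether--Skolem (\cref{normalizerprop}), writing down the conjugation action of $\zeta$ on the lattice $\mathcal{E}_h$ in closed form, and then identifying each irreducible summand of $\rho_{2^n}-\mathrm{Ind}_{C_2}^{C_{2^n}}\sigma$ inside $V_h$ by hand. This requires a number-theoretic lemma (\cref{ntlem}) controlling $5^\ell\bmod 2^n$ and the $2$-adic valuation of $1-5^\ell$, together with a dimension count. Your approach bypasses all of this: once you know $\chi_{V_h}(g)=\mathrm{tr}_{\mathrm{rd}}(g)\,\mathrm{tr}_{\mathrm{rd}}(g^{-1})$ and that $\mathrm{tr}_{\mathrm{rd}}$ vanishes on elements of order $\geq 4$ (because $\Phi_{2^j}$ has no subleading term for $j\geq 2$), the character comparison is immediate and handles all $n$ uniformly, including $n=1$. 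The paper's method buys an explicit orbit decomposition of the lattice, which could in principle feed into later computations; your method is shorter, more conceptual, and makes clear that the result is really a statement about reduced traces in division algebras. Your closing remark about descent is on point: since each $\Phi_{2^j}$ remains irreducible over $\bQ_2$, the simple $\bQ_2[C_{2^n}]$-modules are base-changed from $\bQ$, so the character equality over $\bQ_2$ forces the desired isomorphism over $\bQ$ (hence over $\R$).
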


%\begin{alphcor}
%    \label{thedualtheorem}
%    Let $E_h$ be a good Lutin-Tate spectrum at height $h = 2^{n-1}m$, for $m$ odd, and let $\mathcal{O}^\times_h$ 
%    be the small stabilizer group. There is a $C_{2^n}$-equivariant equivalence
%    \[DE_h\simeq_{C_{2^n}} \Sigma^{-V_h} E_h,\]
%    where $C_{2^n}\leq\mathcal{O}^\times_h$ acts diagonally on the right, and 
%    $V_h$ is the $C_{2^n}$-representation of \cref{therepthm}. 
%\end{alphcor}
\setcounter{alphcor}{1}

Interestingly, one is often able to identify $E_h$ as equivariantly self-dual up to an integer suspension 
(See \cref{fig:shiftstable}). This is nice because it implies  
that the self-duality will descend to the higher real $K$-theory upon taking homotopy fixed points. 
As far as the author knows, there is no counterexample to this statement.
This leads us to the notion of an equivariant periodicity.
For any $G$-spectrum, $X$, we say $X$ is \emph{V-periodic} or that $V$ is an 
\emph{$\mathrm{RO}(G)$-periodicity} of $X$ if there is a $G$-equivariant 
equivalence $\Sigma^V X\simeq_G X$. 
Given \cref{therepthm}, the self-duality of a higher real $K$-theory equivalent to the 
existence of a $(V_h+s_h)$-periodicity of $E_h$.
\begin{alphquest}\label{intshiftq}
    Is there an $s_h\in\Z$ such that the $G$-spectrum $E_h$ is $(V_h+s_h)$-periodic?
\end{alphquest}
We review some theory around periodicities of equivariant ring spectra in \cref{orientability} 
along with some equivariant computational methods that can be used to study \cref{intshiftq} at some 
low heights. Our approach differs from that of \cite{dualizingspheres} in that there, the authors
make use of known vanishing lines on the $E_\infty$-page of the multipliciative homotopy 
fixed point spectral sequence computing the $\mathrm{RO}(G)$-image in $\mathrm{Pic}(E^{hG})$. 
We instead use some advancements in computational techniques \cite{hhr16, hhr17, hishiwax, bbhs} 
and the computationally convenient models of Lubin-Tate spectra from \cite{lubintatemodels}
to produce explicit classes realizing equivalences of the form $\Sigma^{|V|-V}E
\simeq_G E$ by multiplication. By choosing $V$ strategically, we avoid computing 
the full $\text{RO}(G)$-image in the Picard group while still studying \cref{intshiftq}. 
This leads to the other main results of this paper. The first of these follows from some
known periodicities that are consequences of the work of Hill-Hopkins-Ravenel in \cite{hhr16,hhr17}. 
See \cref{LTcond} for what we mean by a \emph{good} Lubin-Tate spectrum. 

\begin{alphthm}[{\cref{C4shifts}}]
    \label{theC4shiftstheorem}
    Let $E_h$ be a good Lubin-Tate spectrum at the prime $2$ and height $h$. Then for $C_4\leq\mathcal{O}^\times_h$, there are 
    $C_4$-equivalences
     \[DE_h\simeq_{C_4} \begin{cases} \Sigma^{12}E_h & h = 2\\
    \Sigma^{-h^2}E_h & h = 4,8\end{cases}.\] 
\end{alphthm}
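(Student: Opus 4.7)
The plan is to combine \cref{therepthm} with explicit $\mathrm{RO}(C_4)$-periodicities of $E_h$ deduced from the work of Hill--Hopkins--Ravenel.  By \cref{therepthm}, $DE_h\simeq_{C_4}\Sigma^{-V_h}E_h$, so identifying $DE_h$ with an integer suspension $\Sigma^{k}E_h$ is equivalent to producing a $C_4$-equivariant equivalence $\Sigma^{V_h+k}E_h\simeq_{C_4} E_h$, i.e., a $(V_h+k)$-periodicity of $E_h$.

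The first step is to unpack $V_h$ as a $C_4$-representation.  In each case $h\in\{2,4,8\}$ we have $h=2^{n-1}$ with $n=2,3,4$ respectively, and \cref{therepthm} presents $V_h$, as a $C_{2^n}$-representation, as $h^2/2^{n-1}=2^{n-1}$ copies of the regular representation of $C_{2^{n-1}}$ inflated along $C_{2^n}\twoheadrightarrow C_{2^{n-1}}$.  The composite $C_4\hookrightarrow C_{2^n}\twoheadrightarrow C_{2^{n-1}}$ has kernel exactly the unique $C_2\le C_4$ and image the unique $C_2\le C_{2^{n-1}}$, so restricting the regular representation of $C_{2^{n-1}}$ to this image and inflating back to $C_4$ gives $2^{n-2}(1+\sigma)$ as a $C_4$-representation.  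Combining, one finds
\[V_h\;\cong_{C_4}\;\tfrac{h^2}{2}(1+\sigma),\]
where $\sigma$ is the one-dimensional $C_4$-representation with kernel $C_2$.  The theorem now reduces to exhibiting $\tfrac{h^2}{2}(\sigma-1)$ as an $\mathrm{RO}(C_4)$-periodicity of $E_h$ when $h=4,8$, and $14+2\sigma$ as an $\mathrm{RO}(C_4)$-periodicity of $E_2$.

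For the second step, I would apply the computational framework of \cref{orientability} to construct these periodicities from HHR norm classes.  By the definition of a good Lubin-Tate spectrum (\cref{LTcond}), $E_h$ is a $\MUCn$-algebra in which the norm class $\vgen_h\in\pi^{C_{2^n}}_{(2^h-1)\rho_n}\MUCn$ maps to a unit; combined with the classical integer Bott element $v_h\in\pi_{2(2^h-1)}E_h$, this produces a $C_{2^n}$-equivariant periodicity by a virtual representation of underlying dimension zero, which after restriction to $C_4$ and taking appropriate powers (together with similar HHR relations at smaller subgroups needed to kill the $2$-dimensional rotation summands of the restricted regular representation) yields the periodicity $\tfrac{h^2}{2}(\sigma-1)$ for $h=4,8$.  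For $h=2$ the same strategy produces a representation-graded periodicity that becomes $14+2\sigma$ after being combined with the classical $16$-fold integer periodicity of $E_2$.

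The main technical hurdle is the $\mathrm{RO}(C_4)$-degree bookkeeping: ensuring that after restriction from $C_{2^n}$ and combining with integer Bott periodicities, one ends up with a virtual representation supported only on the trivial and $\sigma$-summands of $\mathrm{RO}(C_4)$.  This is exactly the sort of calculation the framework of \cref{orientability} is built to handle, and the existence of the underlying unit classes is a direct consequence of the slice computations of \cite{hhr16, hhr17}.
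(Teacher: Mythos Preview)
Your overall strategy matches the paper's: compute $V_h|_{C_4}=\tfrac{h^2}{2}(1+\sigma)$ and then produce an $\mathrm{RO}(C_4)$-periodicity turning $-V_h$ into an integer. That first step is correct and agrees with \cref{thec4rep}. The gap is in your second step, where you misidentify both the source and the shape of the needed periodicities.

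For $h=4,8$ your description is backwards. Since $V_h|_{C_4}=\tfrac{h^2}{2}(1+\sigma)$ already has \emph{no} $\lambda$-summands, there is nothing to ``kill the $2$-dimensional rotation summands'' of; the only thing you need is a pure $\sigma$-periodicity $\tfrac{h^2}{2}(1-\sigma)$. This does \emph{not} come from the norm class $N(\tgen_m)$ or the $\rho$-periodicity combined with Bott elements as you suggest---the $\rho$-periodicity is linearly independent from what is required. The actual input is \cref{sigpc} (equivalently \cite[Corollary~9.13]{hhr16}): the orientation class $u_{2\sigma}^{2^m}$ is a permanent cycle in $\slicess(N(\tgen_m)^{-1}\BPCn)$, giving a $(2^{m+1}-2^{m+1}\sigma)$-periodicity of $E_h$ at the $C_4$-level. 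For $h=4$ ($m=2$) this is exactly $8-8\sigma$, and for $h=8$ ($m=4$) it is $32-32\sigma$, matching $\tfrac{h^2}{2}(1-\sigma)$ on the nose; one line of arithmetic then yields $J(-V_h)=-h^2$.

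For $h=2$ the situation is the opposite of what you describe: the $\sigma$-periodicity alone is $4-4\sigma$, and no integer combination of this with $\rho_4$ takes $-2-2\sigma$ to an integer. An additional, genuinely $\lambda$-involving periodicity is required. The paper uses \cref{height2pc}: the class $u_{2\sigma}u_{4\lambda}$ is a permanent cycle (from the computations of \cite{hhr17,hishiwax}), giving a $(10-2\sigma-4\lambda)$-periodicity. One then writes $10-2\sigma-4\lambda=12+(2+2\sigma)-4\rho_4$ and uses $J(\rho_4)=0$ to conclude $J(-V_2)=12$. Your proposal does not name this class or this periodicity, and the phrase ``the same strategy\ldots combined with the classical $16$-fold integer periodicity'' does not produce it. Also, a minor point: being a good Lubin--Tate spectrum (\cref{LTass}) is about having all automorphisms over a finite field, not about being an $\MUCn$-algebra; the latter comes separately from the Hahn--Shi Real orientation.
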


\begin{alphcor}[{\cref{C4shiftshfpcor}}]
    If $G\leq\estab_h$ is a finite subgroup with $G\cap\mathcal{O}^\times_h\leq C_4$, then we have 
    equivalences \[DE_h^{hG}\simeq \begin{cases} \Sigma^{12}E_h^{hG} & h = 2\\
    \Sigma^{-h^2}E_h^{hG} & h = 4,8\end{cases}.\]  
\end{alphcor}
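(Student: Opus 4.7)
The plan is to promote the $C_4$-equivariant self-duality of \cref{C4shifts} to a $G$-equivariant statement and then descend it via homotopy fixed points. Write $H := G \cap \mathcal{O}^\times_h$, so $H \leq C_4$ by hypothesis, and the quotient $G/H$ embeds into $\Gal(k/\F_p)$.

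First I would invoke the general principle, built into the framework of \cite{dualizingspheres} via \cite{ClausenAduality}, that $\mK(h)$-local Spanier-Whitehead duality commutes with finite homotopy fixed points of subgroups of $\estab_h$: $D(E_h^{hG}) \simeq (DE_h)^{hG}$. Combined with \cref{therepthm}, which provides the $G$-equivariant identification $DE_h \simeq_G \Sigma^{-V_h}E_h$, this reduces the corollary to producing an integer-graded $G$-equivariant periodicity $\Sigma^{V_h + s_h}E_h \simeq_G E_h$, where $s_h$ is the integer appearing in the statement.

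Such a periodicity already exists after restricting to $C_4$ by \cref{C4shifts}; concretely, it is witnessed by multiplication by a unit class $u \in \pi_{V_h + s_h}^{C_4}(E_h)$ assembled from the generators $\bar{v}_k$ and $\bar{t}_k$ of the model of $E_h$ from \cite{lubintatemodels}. The next step is to argue that $u$ is Galois-invariant, so that it defines an equivalence of $G$-equivariant spectra. Since $H \leq C_4$, restriction to $H$ yields an $H$-equivariant periodicity, and Galois invariance of $u$ extends this across $G/H \leq \Gal(k/\F_p)$, producing the required $G$-equivariant equivalence. Taking $G$-homotopy fixed points of the resulting $DE_h \simeq_G \Sigma^{s_h} E_h$ then finishes the argument.

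The main obstacle is the Galois-invariance check for $u$. It should follow from inspecting the explicit form of the periodicity class in the proof of \cref{C4shifts}, together with the fact that the distinguished generators $\bar{v}_k, \bar{t}_k$ in the chosen model of $E_h$ are defined over $\F_p$ and therefore fixed by $\Gal(k/\F_p)$. Once this is established, the rest of the argument is formal, amounting to the standard interplay between homotopy fixed points, duality, and suspension by a virtual representation whose underlying integer shift has already been pinned down.
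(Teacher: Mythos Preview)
Your overall strategy---deduce the result from the $C_4$ case via Galois descent---is the right one, and it is what the paper does. But the implementation you outline has two genuine gaps, and the paper's route avoids both by working at the level of fixed-point spectra rather than at the level of $E_h$ itself.

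First, \cref{therepthm} does \emph{not} give a $G$-equivariant identification $DE_h\simeq_G\Sigma^{-V_h}E_h$. What is $\estab_h$-equivariant is $DE_h\simeq_{\estab_h} S^{-\lieg}\smsh E_h$ (\cref{dualofEclausen}); the further identification $S^\lieg\simeq S^{V_h}$ is only established for finite subgroups of the \emph{small} stabilizer group $\mathcal{O}^\times_h$ (\cref{RepProp}). For $G$ containing Galois elements, $S^\lieg$ need not be a representation sphere for $G$ in any evident way, so your reduction step does not go through as stated.

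Second, the Galois-invariance check you sketch is not grounded in how the periodicity is actually produced. The equivalence in \cref{C4shifts} is not witnessed by a monomial in the generators $\tgen_k$; it is assembled from orientation classes $u_V$ (permanent cycles pushed forward from $H\underline{\Z}$) together with the $\rho$-periodicity class, and it lives in $\pi^{C_4}_\star E_h$. Arguing that such a class is ``Galois-invariant'' and that this suffices to promote a $C_4$-equivalence to a $G$-equivalence would require substantially more work than you indicate: being fixed under the residual action on $\pi^{G_0}_\star$ is not the same as lifting to $\pi^G_\star$.

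The paper sidesteps both issues. Writing $G_0=G\cap\mathcal{O}^\times_h$ and $\widetilde{\Gal}=G/G_0$, it invokes \cite[Lemma~1.37]{bobkovagoerss} to obtain a $\widetilde{\Gal}$-equivariant splitting $E_h^{hG_0}\simeq\widetilde{\Gal}_+\smsh E_h^{hG}$. With this in hand the descent is purely formal: restrict the $C_4$-equivalence of \cref{C4shifts} to $G_0\leq C_4$ to get $DE_h^{hG_0}\simeq\Sigma^{s}E_h^{hG_0}$, then apply $(-)^{h\widetilde{\Gal}}$ and use the splitting on both sides. No inspection of any particular periodicity class is needed, and one never has to identify $S^\lieg$ as a $G$-representation sphere.
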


\begin{srmk}
    We note that the equivalence for $h=2$ follows from 
    \cite[Theorem~1]{Bobkova_dual}, and is also recovered in \cite{dualizingspheres}. 
\end{srmk}

\begin{alphthm}[\cref{height4p,C8shift}]
\label{theC8shifttheorem}
    Let $E_4$ be a good Lubin-Tate spectrum at the prime $2$ and height $h=4$. Then for $C_8\leq\mathcal{O}^\times_4$,
    $E_4$ is $V$-periodic for the virtual $C_8$-representation $V$ given in \cref{height4p}.
    Consequently, there is a $C_8$-equivalence\[DE_4\simeq_{C_8} \Sigma^{112}E_4.\]
\end{alphthm}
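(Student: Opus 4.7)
The plan is to apply the strategy developed around \cref{intshiftq}: exhibit an $\mathrm{RO}(C_8)$-graded unit $u\in \pi_V^{C_8}(E_4)$ realizing the claimed $V$-periodicity, where $V$ is the representation constructed in \cref{height4p}. Multiplication by $u$ then gives a $C_8$-equivalence $\Sigma^V E_4\simeq_{C_8}E_4$. By the design of $V$, one has $V = V_4+112$ in $\mathrm{RO}(C_8)$, so \cref{therepthm} combined with the periodicity yields
\[DE_4\simeq_{C_8}\Sigma^{-V_4}E_4\simeq_{C_8}\Sigma^{-V_4+V}E_4=\Sigma^{112}E_4,\]
which is the desired self-duality. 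Thus the bulk of the proof is the existence of the periodicity; the duality statement is a formal corollary of \cref{therepthm}.

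The substantive work is in constructing $V$ and the unit $u$. Following \cite{lubintatemodels}, I would fix a good Lubin-Tate model of $E_4$ presented as a quotient of a $\BPCn\langle m\rangle$-type spectrum, so that one has explicit equivariant generators $\tgen_i$ (equivalently $\vgen_i$) in prescribed $\mathrm{RO}(C_8)$-degrees, along with norms of classes defined on restriction to $C_2$ and $C_4$. The representation $V$ of \cref{height4p} is assembled as a $\Z$-linear combination of these degrees, chosen so that (i) its underlying dimension is $|V|=|V_4|+112=128$, (ii) the class of $V$ modulo $\Z$ matches $V_4\cong 4\cdot\mathrm{infl}^{C_8}_{C_4}\rho_{C_4}$, and (iii) the corresponding monomial $u$ in the $\tgen_i$ and $\vgen_i$ is realizable as an actual class in $\pi_V^{C_8}(E_4)$. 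A preliminary reduction via \cref{theC4shiftstheorem} and the norm-restriction Mackey structure pins down the $C_4$-level pieces of $V$ essentially for free, so one really only needs to produce the new $C_8$-equivariant factors.

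The main obstacle is verifying that $u$ is a unit at the $C_8$-equivariant level. The strategy is to detect $u$ on the $E_\infty$-page of the slice spectral sequence for the chosen good model, using the computational framework of \cite{hhr16,hhr17,hishiwax,bbhs}. This splits into two sub-tasks. First, one must show that $u$ is a permanent cycle, which via the Leibniz rule reduces to knowing the differentials on the slice generators at height $4$ for $C_8$, augmented by the fact that $\res^{C_8}_{C_4}u$ is a unit by \cref{theC4shiftstheorem} and that the underlying image of $u$ in $\pi_*E_4$ is manifestly invertible. Second, one must rule out hidden slice filtration on the would-be inverse, so that underlying invertibility promotes to a $C_8$-equivariant equivalence; here the vanishing lines and horizontal-vanishing properties of the slice spectral sequence at height $4$ in the relevant $\mathrm{RO}(C_8)$-degree do the heavy lifting. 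The technical core lies in the first sub-task: at $C_8$ the family of potentially nontrivial slice differentials is genuinely longer and more intertwined than in the $C_4$ case of \cref{theC4shiftstheorem}, so the bookkeeping of which differentials can and cannot hit classes in the slice filtration of $u$ is where I expect the argument to be most delicate.
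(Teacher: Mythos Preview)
Your high-level outline is right: the duality statement is a formal consequence of \cref{therepthm} once one exhibits an $\mathrm{RO}(C_8)$-periodicity of the form $V_4+112$ (modulo the $\rho_8$-periodicity of \cref{rhoperiodicity}; note that literally $V=V_4+112$ has $|V|=128\neq 0$, so the periodicity used is $W_1=112+V_4-16\rho_8$). However, the mechanism you propose for producing that periodicity is not what the paper does, and as written it may not be feasible.

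You propose to build $u$ as a monomial in the slice generators $\tgen_i,\vgen_i$ and then verify directly in the $C_8$-slice spectral sequence that it is a permanent cycle, anticipating a delicate bookkeeping of $C_8$-level differentials. The paper avoids $C_8$-level slice computations entirely. Its argument runs through \emph{orientation classes} $u_V$ (classes in $\pi^G_{|V|-V}P^0_0 R$, not $\tgen$-monomials), which are automatically invertible once they survive to $E_\infty$ in the HFPSS (\cref{uvinvert}, \cref{uvorientability}); there is no separate step ruling out hidden filtration on an inverse. The substantive input is purely at the $C_4$ level: a single Hill--Shi--Wang--Xu $d_{61}$-differential in $\slicess(\bpcfourtwo)$, pushed to the $C_4$-Tate spectral sequence where $a_\lambda$ is invertible, shows that $u_{4\sigma_4}u_{16\lambda}$ is a permanent cycle (\cref{height4pc}). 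Multiplying by the known survivor $u_{8\sigma_4}$ and then \emph{norming up} via $N_{C_4}^{C_8}$ produces the $C_8$-level permanent cycle $u_{12\sigma_8}u_{12\lambda_1}u_{32\lambda_0}$ (\cref{uW1pc}) with no further differential analysis.

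So the direction of your norm--restriction reduction is inverted: the $C_4$ level is where the hard computation happens, and the $C_8$ statement comes for free by norming, not the other way around. Your proposed direct $C_8$-slice approach would require differential information that is not currently available in the literature; the paper's Tate-plus-norm maneuver is precisely the device that sidesteps this.
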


\begin{alphcor}[{\cref{C8shiftshfpcor}}]\label{De4hC8}
    Let $E_4$ be a good Lubin-Tate spectrum and $G\leq\estab_4$ a finite subgroup 
    with $G\cap\mathcal{O}^\times_4 = C_8$. Then 
    \[DE_4^{hG}\simeq \Sigma^{112}E_4^{hG}.\]
\end{alphcor}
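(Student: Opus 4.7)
The plan is to deduce \cref{De4hC8} from \cref{theC8shifttheorem} via the standard two-stage descent: first through $C_8$-homotopy fixed points using $\mK(4)$-local ambidexterity, then through Galois descent for the residual quotient $G/C_8$. Since $\stab_4$ sits inside $\estab_4$ as the kernel of the projection to $\Gal(k/\F_2)$, the hypothesis $G\cap\stab_4=C_8$ forces $C_8\trianglelefteq G$ with $G/C_8$ a subgroup of $\Gal(k/\F_2)$. In particular $E_4^{hG}\simeq (E_4^{hC_8})^{h(G/C_8)}$, and $E_4^{hG}\to E_4^{hC_8}$ is a finite $\mK(4)$-local Galois extension with group $G/C_8$.

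First, I would apply $C_8$-homotopy fixed points to the $C_8$-equivariant equivalence $DE_4\simeq_{C_8}\Sigma^{112}E_4$ of \cref{theC8shifttheorem}. The right-hand side produces $\Sigma^{112}E_4^{hC_8}$. For the left, I would use that for any finite $H\leq\estab_4$, Tate vanishing makes the norm $(E_4)_{hH}\to (E_4)^{hH}$ a $\mK(4)$-local equivalence; since $D=F(-,L_{\mK(4)}\sphere)$ converts the colimit $(E_4)_{hH}$ to the limit $(DE_4)^{hH}$, we obtain $(DE_4)^{hH}\simeq D(E_4^{hH})$. Applied with $H=C_8$ this yields $D(E_4^{hC_8})\simeq \Sigma^{112}E_4^{hC_8}$.

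Second, I would take $h(G/C_8)$-fixed points of this equivalence. This requires that $D(E_4^{hC_8})\simeq\Sigma^{112}E_4^{hC_8}$ be $(G/C_8)$-equivariant with respect to the Galois action, which is the technical step: the equivalence of \cref{theC8shifttheorem} is realized by multiplication by an explicit periodicity class in $\pi_\star^{C_8}E_4$, and one must verify that this class can be chosen Galois-invariant. The ingredients of its construction — slice-filtration classes and norms from lower heights — are intrinsically invariant under the Galois action, so this reduces to routine bookkeeping. A second application of $\mK(4)$-local ambidexterity, now to the $(G/C_8)$-action on $E_4^{hC_8}$ (using Tate vanishing for the full finite subgroup $G\leq\estab_4$), then delivers
\begin{equation*}
D(E_4^{hG})\simeq D(E_4^{hC_8})^{h(G/C_8)}\simeq \Sigma^{112}E_4^{hG}.
\end{equation*}
The main obstacle is the Galois-invariance check for the periodicity class that realizes the $C_8$-equivariant self-duality; once that is in hand, the corollary follows formally from the $\mK(4)$-local ambidexterity machinery.
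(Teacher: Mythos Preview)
Your approach is correct and structurally the same as the paper's: a two-stage descent through $C_8$-fixed points (using Tate vanishing to commute $D$ past $(-)^{hC_8}$) followed by $(G/C_8)$-fixed points. The paper's proof simply says ``as in the proof of \cref{C4shiftshfpcor}'', and that proof differs from yours only in how it handles the Galois step: rather than arguing abstractly from ambidexterity, it invokes \cite[Lemma~1.37]{bobkovagoerss} to identify $E_4^{hC_8}\simeq \widetilde{\Gal}_+\smsh E_4^{hG}$ as an \emph{induced} $\widetilde{\Gal}$-spectrum, which makes the final fixed-point computation $(\widetilde{\Gal}_+\smsh\Sigma^{112}E_4^{hG})^{h\widetilde{\Gal}}\simeq\Sigma^{112}E_4^{hG}$ immediate.

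You are right to flag the $\widetilde{\Gal}$-equivariance of $DE_4^{hC_8}\simeq\Sigma^{112}E_4^{hC_8}$ as the substantive point. The paper's displayed chain of equivalences also passes through this step (going from $(DE^{hG_0})^{h\widetilde{\Gal}}$ to $(\Sigma^{s}E^{hG_0})^{h\widetilde{\Gal}}$) without comment; the Bobkova--Goerss lemma does not by itself circumvent it. Your justification---that the equivalence is assembled from the $\estab$-equivariant identification $DE\simeq S^{-\lieg}\smsh E$ of \cref{dualofEclausen} together with an orientation class built from $u_V$'s and norms, all of which are natural under the Galois action---is the correct reason this works, and is arguably more transparent than the paper's treatment.
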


\begin{figure}[H]
    \begin{center}
        \begin{tabular}{| c | c | c | c | c |}
        \hline
        $G$ & $h$ & $s_h$ &  Ref. \\
        \hline
        \hline
        $C_2$ & $1$ & $-1$ & \cite{HahnMitchellDKO}\\
        \hline
        $C_2$ & $\geq 1$ & $-h^2$ & \cite{dualizingspheres}\\
        \hline
        $C_4$ & $2$ & $12$ & \cite{Bobkova_dual}\\
        \hline
        $C_4$ & $8$ & -64 & \cref{C4shifts}\\
        \hline
        $C_8$ & $4$ & $112$ & \cref{C8shift}\\
        \hline
        \end{tabular}
        \quad 
        \begin{tabular}{| c | c | c | c | c |}
        \hline
        $G$ & $h$ & $s_h$ & Ref. \\
        \hline
        \hline
        $C_3$ & 2 & $-20$ & \cite{BehrensModularK2}\\
        \hline
        $C_p$ & $p-1$ & $-h^2(2p-1)$ & \cite{dualizingspheres}\\
        \hline
        \end{tabular}
        \caption{Examples where $E_h$ at $p=2$ (left) and odd $p$ (right) satisfies
        $DE_h\simeq_G\Sigma^{s_h}E_h$.}
        \label{fig:shiftstable}
    \end{center}
    \end{figure}
    \vspace{-1em}

We end this introduction by remarking that the `algebraically closed' Lubin-Tate spectra,
i.e. those associated to a formal group law over $\overline{\F}_p$ are not 
good Lubin-Tate spectra for the theorems stated above, so the duality theorems do not 
necessarily apply to them. We include an appendix studying this case
nonequivariantly. Therein we compute the homotopy groups of the dual of an
algebraically closed Lubin-Tate spectrum, from which we can conclude that indeed the 
theorems above cannot hold as is $E$ algebraically closed. 

\begin{alphthm}[{\cref{hmtpyClosedEthy}}]
Let $E(\overline{\F}_p)$  denote the algebraically closed Lubin-Tate spectrum associated to the Honda 
formal group law.
 Then the homotopy groups of $DE(\overline{\F}_p)$ are given by 
    \[\pi_\ast DE(\overline{\F}_p)\cong E(\F_{p^h})_{\ast+h^2}\hat{\ox}_{W(\F_{p^h})}D^c_h W(\overline{\F}_p).\]
\end{alphthm}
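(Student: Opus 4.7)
The plan is to realize $E(\overline{\F}_p)$ as a $\mK(h)$-localized filtered colimit of the classical finite-residue-field Lubin--Tate spectra and then dualize term-by-term, taking as input the non-equivariant case of the duality theorem recalled in the introduction. Specialized to the trivial subgroup, that theorem gives, for each $k\geq 1$, an equivalence $DE(\F_{p^{hk}})\simeq \Sigma^{-h^2}E(\F_{p^{hk}})$.

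The first step is to present $E(\overline{\F}_p)\simeq L_{\mK(h)}\colim_k E(\F_{p^{hk}})$, where the transition maps are the base-change inclusions induced by $W(\F_{p^{hk}})\hookrightarrow W(\F_{p^{hk'}})$. This is the spectral analog of the fact that $W(\overline{\F}_p)$ is the $p$-completion of $\colim_k W(\F_{p^{hk}})$. Applying the function spectrum $F(-,L_{\mK(h)}\sphere)$ and using that $\mK(h)$-local function spectra turn filtered colimits into homotopy limits, one obtains
\[
DE(\overline{\F}_p)\simeq \holim_k DE(\F_{p^{hk}})\simeq \holim_k\Sigma^{-h^2}E(\F_{p^{hk}}),
\]
with transition maps the duals of the base-change inclusions, namely the trace maps of the finite \'etale extensions.

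The second step is to pass to homotopy groups via a Milnor sequence. Because each partial trace $\trans_{W(\F_{p^{hk'}})/W(\F_{p^{hk}})}$ is surjective in the unramified setting, the $\lim^1$-term vanishes, giving $\pi_\ast DE(\overline{\F}_p)\cong \lim_k E(\F_{p^{hk}})_{\ast+h^2}$. Using the base-change identification $E(\F_{p^{hk}})_\ast\cong E(\F_{p^h})_\ast\otimes_{W(\F_{p^h})}W(\F_{p^{hk}})$ and pulling the factor $E(\F_{p^h})_{\ast+h^2}$ out of the limit yields
\[
\pi_\ast DE(\overline{\F}_p)\cong E(\F_{p^h})_{\ast+h^2}\,\hat{\ox}_{W(\F_{p^h})}\,\lim_k W(\F_{p^{hk}}),
\]
where the limit is taken along the partial traces.

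Finally, I would identify $\lim_k W(\F_{p^{hk}})$ along partial traces with the continuous dual $D^c_h W(\overline{\F}_p)=\Hom^c_{W(\F_{p^h})}(W(\overline{\F}_p),W(\F_{p^h}))$. This rests on the standard fact that in an unramified tower the trace pairing induces an isomorphism $\Hom_{W(\F_{p^h})}(W(\F_{p^{hk}}),W(\F_{p^h}))\cong W(\F_{p^{hk}})$, and that under this identification the restriction maps on the $\Hom$ side translate to the partial traces on the $W(\F_{p^{hk}})$ side. The main obstacle here is the first step: producing the clean presentation of $E(\overline{\F}_p)$ as a $\mK(h)$-local filtered colimit of the $E(\F_{p^{hk}})$ and confirming the compatibility of $\mK(h)$-local duality with that colimit. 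The remaining verifications --- vanishing of $\lim^1$, commuting the completed tensor product past the limit, and the trace-pairing identification --- are routine once one fixes a framework of pro-free $W(\F_{p^h})$-modules.
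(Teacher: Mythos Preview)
Your argument is correct, but it takes a different route from the paper's. The paper works relative to $E(\F_{p^h})$: it invokes the descent spectral sequence
\[H^s_c\bigl(\Gal;\,E(\overline{\F}_p)_t\llbracket\Gal\rrbracket\bigr)\implies \pi_{t-s}D_{E(\F_{p^h})}E(\overline{\F}_p),\]
with $\Gal=\Gal(\overline{\F}_p/\F_{p^h})$, identifies the coefficient module as coinduced (via the normal basis theorem, so that $E(\overline{\F}_p)_\ast\cong\map_c(\Gal,E(\F_{p^h})_\ast)$), collapses the spectral sequence by Shapiro's lemma to $E(\F_{p^h})_\ast\llbracket\Gal\rrbracket$, and only then applies the adjunction $DE(\overline{\F}_p)\simeq F_{E(\F_{p^h})}(E(\overline{\F}_p),DE(\F_{p^h}))\simeq \Sigma^{-h^2}D_{E(\F_{p^h})}E(\overline{\F}_p)$ to bring in the single known duality $DE(\F_{p^h})\simeq\Sigma^{-h^2}E(\F_{p^h})$. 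Your approach instead dualizes each term in the colimit presentation and assembles the answer as an inverse limit along traces. This is more elementary---no spectral sequence machinery---and makes the appearance of $D^c_h W(\overline{\F}_p)$ transparent via the trace pairing. The paper's route has the advantage of using the duality for $E(\F_{p^h})$ only once, so it never needs to identify the induced transition maps under the (a priori noncanonical) equivalences $DE(\F_{p^{hk}})\simeq\Sigma^{-h^2}E(\F_{p^{hk}})$; in your argument that identification is the one point requiring genuine care, and you would justify it exactly by the adjunction the paper uses, namely $DE(\F_{p^{hk'}})\simeq E(\F_{p^{hk'}})\smsh_{E(\F_{p^{hk}})}DE(\F_{p^{hk}})$ together with the Rognes self-duality of the finite Galois extension $E(\F_{p^{hk}})\to E(\F_{p^{hk'}})$.
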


\subsection*{Notation \& conventions}
We will use $\rho_{2^n}$ and $\sigma_{2^n}$ to denote the real regular representation and 
$1$-dimensional sign 
representation of $C_{2^n}$, the cyclic group of order $2^n$, dropping the subscripts 
when there is no ambiguity. 
We will write $\lambda_i$ for  the $2$-dimensional representation of $C_{2^n}$
given by rotation by $\frac{2\pi}{2^{n-i}}$ for $0\leq i < n-1$, leaving the group implicit. 
We will also write $\lambda = \lambda_0$ and take the convention that $\lambda_{n-1} = 2\sigma$. 
If $V$ is an irreducible representation then after $2$-completion $S^V\simeq_{C_{2^n}}S^W$ for 
$W$ either $1$, $\sigma$, or $\lambda_i$ for some $0\leq i < n-1$. 
Since this is the relevant notion of equivalence of representations for us, we will write 
\[\rho_{2^n} = 1 + \sigma + \sum_{i=0}^{n-2}2^{n-2-i}\lambda_i.\]

Let $G$ be a finite group.
We let $\spectra^G$ denote the
$\infty$-category of genuine $G$-spectra obtained by taking the homotopy coherent nerve of 
bifibrant objects in the stable model structure on the category of orthogonal $G$-spectra \cite{Mandell2002}.
Our main objects of interest are spectra with $G$-action, that is, objects of the 
$\infty$-category $\spectra^{BG}$. We view these as a geniuine $G$-spectra using the 
\textit{cofree extension of universe} $\spectra^{BG}\to\spectra^G$.
This will be left implicit in the notation.  For $X\in\spectra^G$
we will write $X^h = F(EG_+,X)\in\spectra^G$ for the function spectrum.
With our conventions, if $X$ is a spectrum with $G$-action 
then $X^h\simeq_G X$ so that any such $X$ is \textit{cofree}.

\subsection*{Some spectral sequences}
For $X\in\spectra^G$, we write $P^n X$ to denote 
the $n$-th slice coconnected cover in the sense of \cite{UllmanRSSS} and 
$P^n_n X$ for the $n$-th slice section, that is, the fiber of the map 
$P^n X\to P^{n-1}X$. The resulting spectral sequence on homotopy groups has signature 
\[E_2^{s,V}=\pi_{V-s}^G P^{|V|}_{|V|} X \implies \pi_{V-s}^G X,\] for $V\in\textrm{RO}(G)$.
The differentials on the $E_r$-page are of the form 
\[d_r:E_2^{s,V}\to E_2^{s+r,V+r-1}\] 
This is the \textit{$G$-slice spectral sequence} of $X$ and we will denote it by 
$\slicess(X)$.

Let $\widetilde{EG} = \cofib(EG_+\to S^0)$. 
We will also consider the towers obtained by applying $F(EG_+, -)$ and 
$\widetilde{EG}\smsh F(EG_+, -)$ to the slice tower. The spectral sequence 
associated to the first of these has signature 
\[E_2^{s,V}=\pi_{V-s}^G F(EG_+,P^{|V|}_{|V|}X)\implies \pi_{-s}(\Sigma^{-V}X)^{hG}.\]
This is the \textit{homotopy fixed point spectral sequence} of $X$ and we will denote it by 
$\hfpss(X)$. The spectral sequence associated to the second of these has signature
\[E_2^{s,V} = \pi_{V-s}^G \widetilde{EG}\smsh F(EG_+,P^{|V|}_{|V|}X)) \implies \pi_{-s}(\Sigma^{-V}X)^{tG},\]
where $(-)^{tG} = \widetilde{EG}\smsh_G F(EG_+, - )$ is the \textit{Tate construction}. This is the 
\textit{Tate spectral sequence} of $X$ and we will denote it by 
$\tatess(X)$.  We note that the maps \[EG_+\to S^0\to \widetilde{EG}\] 
give rise to maps of spectral sequences \[\slicess(X)\to\hfpss(X)\to\tatess(X).\]

\subsection*{Acknowledgements}
I would like to thank Jeremy Hahn, Mike Hill, Guchuan Li, XiaoLin Danny Shi, 
and Vesna Stojanoska for some helpful conversations related to this work. 
I would also like to thank Tobias Barthel, and Itamar Mor for some helpful email exchanges related 
to the content of the appendix. 
Finally, I would like express my sincere gratitude to my advisor, Agn\`es Beaudry, 
for recommending this project, for providing invaluable insight throughout, 
and for detailed proof-reading of early drafts. 
This material is based upon work supported by the 
National Science Foundation under Grant No. DMS 2143811.

% --------------------------------------------------------------

% !TEX root = master.tex

\section{An equivariant dual of Lubin-Tate spectra}\label{duality}

In this section we review some of the theory layed out in 
\cite{dualizingspheres}. The main result identifies the 
equivariant homotopy type of the  $\mK(h)$-local 
Spanier-Whitehead dual of certain Lubin-Tate spectra with respect to 
the action of its stabilizer group.

\subsection{Some conditions on Lubin-Tate spectra}\label{LTcond}
Before delving into the duality, we make some restrictions 
on the Lubin-Tate spectra we consider.
In this paper, we take $E$ to be a Lubin-Tate spectrum associated to 
a pair, $(k,\Gamma)$, where $\Gamma$ is a height $h$ formal group law 
defined over $\F_p$, and $k$ an algebraic extension of $\F_p$. We view $\Gamma$ implicitly as 
a formal group law over $k$ via extension of scalars.
Let $\mathbb{G} = \textrm{Aut}(k,\Gamma)$
denote the corresponding stabilizer group, i.e. the group of automorphisms
of the pair $(k,\Gamma)$.  
We refer the reader to the appendices of \cite{bujard} or 
Appendix 2 of \cite{orangebook} for a 
review of formal group laws and their endomorphism algebras. We will use the results in 
these sources without further mention or proof.

\begin{defn}\label{LTass}
\begin{enumerate}[(a)] 
\item We say that the Lubin-Tate spectrum $E$ \emph{has all automorphisms} if the pair $(k,\Gamma)$ 
is such that the formal group law 
$\Gamma$ has all of its automorphisms over $k$, that is 
\[\textrm{Aut}_{k}(\Gamma)\cong
\textrm{Aut}_{\overline{\F}_p}(\Gamma).\]

\item We say that the Lubin-Tate spectrum $E$ is \emph{residually finite} if $k$ is a finite extension
of $\F_p$. 

\item We say $E$ is a \emph{good} Lubin-Tate spectrum if it is both residually finite and has all 
automorphisms.
\end{enumerate}
\end{defn}
 
If $E$ has all automorphisms then, since $\Gamma$ is defined 
over $\F_p$, we can deduce that the automorphism group $\estab$ splits as
\begin{equation}
\estab\cong\stab\rtimes\Gal(k/\F_p)
\end{equation}
where $\stab =\textrm{Aut}_k(\Gamma)$ is called the \textit{small Morava stabilizer group}. 
The notation here is inherited from the appearance of this group in number theory. 
Recall that $\End_{\overline{\F}_p}(\Gamma)$ is isomorphic to the 
maximal order, $\cO$, in the central divisional algebra 
of Hasse invariant $1/h$ over $\mathbb{Q}_p$. We choose such an isomorphism. 
Since $\Gamma$ is defined over $\F_p$, 
the endomorphism $x^p$ defines an element $\xi_{\Gamma}$ in this 
maximal order called the Frobenius. Furthermore, if $k = \F_{p^r}$
\[\End_{k}(\Gamma)\cong C_{\cO}(\xi_\Gamma^r)\] is the centralizer of $\xi_\Gamma^r$ in 
$\cO$. The following proposition is an immediate consequence of this discussion and is 
useful for determining when a formal group law has all of its automorphisms over a 
finite extension of $\F_p$. 

\begin{prop}\label{frobprop}
The formal group law $\Gamma$ has all automorphisms over $\F_{p^r}$ 
if and only if $\xi_\Gamma^r$ is central in $\cO$, that is, if 
\[\xi_\Gamma^r = p^{r/h}u \in \End_{\F_p}(\Gamma),\] 
for some $u\in\Z_p^\times$. 
\end{prop}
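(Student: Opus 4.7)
The plan is to translate both sides of the proposition into ring-theoretic statements about $\cO$ via the identification $\End_{\F_{p^r}}(\Gamma) \cong C_\cO(\xi_\Gamma^r)$ recorded above, and then exploit the local structure of $\cO$.

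To begin, I would note that $\Gamma$ having all automorphisms over $\F_{p^r}$ is equivalent to $\Aut_{\F_{p^r}}(\Gamma) = \Aut_{\overline{\F}_p}(\Gamma) = \cO^\times$, which under the centralizer identification becomes $C_\cO(\xi_\Gamma^r)^\times = \cO^\times$; in particular every unit of $\cO$ commutes with $\xi_\Gamma^r$. The key step is then to promote this to the statement that $\xi_\Gamma^r$ is central in $\cO$. Writing $R := C_\cO(\xi_\Gamma^r)$, this is a subring of $\cO$ containing $1$ and, by hypothesis, all of $\cO^\times$. Since $\cO$ is a (non-commutative) local ring with two-sided maximal ideal $\mathfrak{m}$ and $\cO = \cO^\times \sqcup \mathfrak{m}$, it suffices to show $\mathfrak{m} \subset R$: for any $a \in \mathfrak{m}$ the element $1+a$ lies in $\cO^\times \subset R$, so $a = (1+a) - 1 \in R$. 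Hence $R = \cO$ and $\xi_\Gamma^r$ is central.

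Since $\cO$ is a maximal order in the central division algebra of Hasse invariant $1/h$ over $\mathbb{Q}_p$, its center is $\Z_p$, so $\xi_\Gamma^r \in \Z_p$. The normalized valuation on $\cO$ satisfies $v(\xi_\Gamma) = 1/h$ (because $\xi_\Gamma^h$ agrees with $[p]$ up to a unit and $v(p) = 1$), so $v(\xi_\Gamma^r) = r/h$. For a nonzero element of $\Z_p$ this forces $h \mid r$, and then $\xi_\Gamma^r = p^{r/h} u$ for some $u \in \Z_p^\times$ is immediate. The reverse direction is essentially formal: if $\xi_\Gamma^r = p^{r/h} u$ with $u \in \Z_p^\times$, then $\xi_\Gamma^r$ lies in $Z(\cO) = \Z_p$, whence $C_\cO(\xi_\Gamma^r) = \cO$ and in particular $\Aut_{\F_{p^r}}(\Gamma) = \cO^\times = \Aut_{\overline{\F}_p}(\Gamma)$.

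The only real obstacle is the passage from ``centralizes every unit of $\cO$'' to ``is central in $\cO$'', whose proof hinges on the basic fact $1 + \mathfrak{m} \subset \cO^\times$ available in the local ring $\cO$; the remainder is a routine consequence of the standard structure of $\cO$ as a maximal order in a central division algebra together with the valuation of the Frobenius.
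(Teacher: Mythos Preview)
Your proof is correct. The paper does not actually give a proof of this proposition; it presents it as ``an immediate consequence'' of the identification $\End_{\F_{p^r}}(\Gamma)\cong C_\cO(\xi_\Gamma^r)$ recorded just before. Your argument supplies the one nontrivial detail the paper suppresses, namely the passage from $\cO^\times \subset C_\cO(\xi_\Gamma^r)$ to $C_\cO(\xi_\Gamma^r) = \cO$ via the local structure of $\cO$ (so that $1+\mathfrak{m}\subset\cO^\times$ forces $\mathfrak{m}\subset C_\cO(\xi_\Gamma^r)$). The remaining steps---identifying the center of $\cO$ with $\Z_p$ and reading off the valuation of $\xi_\Gamma^r$---are exactly the standard facts about maximal orders in central division algebras the paper is appealing to.
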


\begin{rmk}
We note that since the Frobenius $\xi_\Gamma$ has valuation $1/h$ 
and the valuation group of $\Z_p$ is $\Z$, if the pair $(\F_{p^r},\Gamma)$ has all automorphisms,
then $r/h\in\Z$.  
\end{rmk}

Our duality results will only directly apply to those Lubin-Tate spectra that 
both have all automorphisms and are residually finite. 
The reason the first of these conditions is useful is that it allows us to 
identify co-operations of $E$ with continuous maps on $\estab$.  
These ideas and results go back to work of Devinatz-Hopkins in \cite{DevinatzHopkins99}. 
We refer the reader to \cite{HoveyEops} for a detailed exposition. 
In particular, the proof of \cite[Theorem~4.11]{HoveyEops} carries over directly. 

\begin{prop}[{\cite[Theorem~2]{DevinatzHopkins99}}]
Let $E$ be a Lubin-Tate spectrum with all automorphisms and let $\estab$ be its
stabilizer group. Then for $H\subset\mathbb{G}$ a closed subgroup we have 
\[\pi_\ast L_{\mK(h)}(E\smsh E^{hH})\cong\map_{c}(\estab/H,E_\ast).\]
\end{prop}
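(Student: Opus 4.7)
The plan is to follow \cite[Theorem~4.11]{HoveyEops} nearly verbatim, reducing to the case $H = \{e\}$ and then taking continuous $H$-invariants. First I would recall the base case: when $H = \{e\}$ (so $E^{hH} \simeq E$), there is an isomorphism
\[\pi_\ast L_{\mK(h)}(E\smsh E) \cong \map_c(\estab, E_\ast),\]
where the right-hand side is topologized via the profinite topology on $\estab = \Aut(k,\Gamma)$ and the $\mathfrak{m}$-adic topology on $E_\ast$. The ``all automorphisms'' hypothesis is exactly what guarantees that every continuous automorphism of $(k,\Gamma)$ is realized by a self-map of $E$, so that $\estab$ (and not a proper subgroup) indexes the continuous maps. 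This is the foundational Morava-module computation of Strickland and Devinatz-Hopkins.

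For a general closed $H \leq \estab$, the Devinatz-Hopkins construction presents $E^{hH}$ as a limit (or totalization of a cosimplicial object) built out of smaller $E^{hU}$ for open $U \supset H$, encoding continuous descent along the $\estab$-action on $E$. Smashing on the left with $E$ and applying $L_{\mK(h)}$, I would argue that this presentation is preserved, so that $L_{\mK(h)}(E \smsh E^{hH})$ is a continuous $H$-homotopy fixed point object for $L_{\mK(h)}(E \smsh E)$ under the action by right translation of $H$ on the right-hand $E$-factor. Passing to $\pi_\ast$ and combining with Step~1, the residual $H$-action on $\map_c(\estab, E_\ast)$ is precisely right translation on the source, so taking continuous invariants gives
\[\map_c(\estab, E_\ast)^H \cong \map_c(\estab/H, E_\ast),\]
which is a purely topological statement about continuous $E_\ast$-valued functions on a profinite quotient.

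The main obstacle is Step~2: verifying that smashing with $E$ commutes with the homotopy limit defining $E^{hH}$ in $\mK(h)$-local spectra, and that the associated descent spectral sequence degenerates onto $\pi_\ast$ without $\lim^1$ contributions. For finite $H$ this is straightforward Morava change-of-rings, but for general closed (possibly infinite profinite) $H$ one must simultaneously track the profinite topology on $\estab/H$ and the $\mathfrak{m}$-adic topology on $E_\ast$, and confirm pro-freeness of the relevant filtered pieces so that continuous invariants commute with the relevant limits. These are precisely the technicalities resolved in \cite{DevinatzHopkins99,HoveyEops}, and the argument ports over directly: the ``all automorphisms'' hypothesis plays the same structural role here as in Hovey's setup, and nothing else about the specific Lubin-Tate spectrum is used.
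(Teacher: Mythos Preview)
Your proposal is correct and aligns with the paper's treatment: the paper does not supply its own proof of this proposition but simply states (immediately before the proposition) that ``the proof of \cite[Theorem~4.11]{HoveyEops} carries over directly,'' which is exactly the route you sketch. Your outline of the base case $H=\{e\}$ followed by continuous $H$-invariants, together with the identification of the $\lim^1$/pro-freeness technicalities as the points requiring care, accurately summarizes that argument.
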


\begin{rmk}
We note that for $A\cong\lim_i A_i$ and $B\cong\lim_i B_i$ topological spaces given 
by the limit of discrete spaces we can identify the set of continuous maps from $A$ to $B$ as
\[\map_{c}(A,B)\cong \lim_i\colim_j\map(A_j,B_i).\]
In the proposition above, $\estab/H$ has its natural profinite topology and 
$E_\ast$ has the $\mathfrak{m}$-adic topology. 
\end{rmk}

\begin{cor}[{\cite[Theorem~3]{DevinatzHopkins99}}]\label{galoiscor}
The unit map
\[L_{\mK(h)}\sphere\to E\] is a $\mK(h)$-local pro-$\mathbb{G}$-Galois extension in the sense of 
\cite[Definition~8.1.1]{rognesgalois}.
\end{cor}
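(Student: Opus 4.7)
The plan is to unpack \cite[Definition~8.1.1]{rognesgalois} and verify its conditions using the preceding proposition together with the foundational work of Devinatz-Hopkins. A $\mK(h)$-local pro-$\mathbb{G}$-Galois extension $L_{\mK(h)}\sphere \to E$ requires a continuous profinite $\mathbb{G}$-action on $E$ through $L_{\mK(h)}\sphere$-algebra maps, an equivalence $L_{\mK(h)}\sphere \simeq E^{h\mathbb{G}}$ (with continuous homotopy fixed points in the sense of Devinatz-Hopkins), and a Galois condition of the form $L_{\mK(h)}(E \smsh E) \simeq \Map_c(\mathbb{G}, E)$, where $\Map_c$ denotes the continuous mapping spectrum.

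First I would present $\mathbb{G}$ as the cofiltered limit of its finite quotients $\mathbb{G}/U$ indexed by open normal subgroups $U \leq \mathbb{G}$. For each such $U$ the Devinatz-Hopkins construction yields a $\mK(h)$-local commutative ring spectrum $E^{hU}$ carrying an action of the finite group $\mathbb{G}/U$, and these assemble into a directed system whose appropriate colimit recovers $E$ together with its full pro-$\mathbb{G}$-action. The equivalence $L_{\mK(h)}\sphere \simeq E^{h\mathbb{G}}$, read off from \cite[Theorem~3]{DevinatzHopkins99}, then supplies the fixed-point part of the definition and pins down the continuous $L_{\mK(h)}\sphere$-algebra structure.

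For the Galois condition, apply the preceding proposition with $H = U$ for each open normal $U$ to obtain
\[\pi_\ast L_{\mK(h)}(E \smsh E^{hU}) \cong \map_c(\mathbb{G}/U, E_\ast).\]
These isomorphisms lift to $\mK(h)$-local equivalences $L_{\mK(h)}(E \smsh E^{hU}) \simeq \Map_c(\mathbb{G}/U, E)$ by a standard argument, using that both sides are $\mK(h)$-local and that $E_\ast$ is pro-free as a continuous $E_\ast\estab$-module. Taking the colimit over $U$, and using compatibility of $\map_c$ with the profinite structure, this upgrades to $L_{\mK(h)}(E \smsh E) \simeq \Map_c(\mathbb{G}, E)$, which is precisely the Galois condition.

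The main obstacle is the profinite bookkeeping: one must verify that the $E^{hU}$ assemble into a genuine pro-system matching the profinite topology on $\mathbb{G}$, and that colimits, function spectra, and fixed points all interact correctly with $\mK(h)$-localization. This is exactly the technical core of Devinatz-Hopkins, so in practice the corollary is a direct translation of their results into the language of Rognes pro-Galois extensions.
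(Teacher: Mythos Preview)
The paper does not supply a proof of this corollary at all: it is stated with a citation to \cite[Theorem~3]{DevinatzHopkins99} and to Rognes' definition, and then the text moves on. Your sketch is therefore more detailed than anything in the paper, and it is the correct unpacking of what one would check: exhibit $\mathbb{G}$ as a limit of finite quotients $\mathbb{G}/U$, use the Devinatz--Hopkins intermediate fixed points $E^{hU}$ to build the pro-system, invoke $E^{h\mathbb{G}}\simeq L_{\mK(h)}\sphere$, and use the preceding proposition (with $H=U$) to identify $L_{\mK(h)}(E\smsh E^{hU})$ with a mapping spectrum, yielding the Galois condition in the colimit. This is exactly the standard translation of Devinatz--Hopkins into Rognes' language, carried out in detail in \cite[\S5.4]{rognesgalois}, and your closing remark that the content is essentially already in those sources is accurate.
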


The dependance of our results on $E$ being residually finite is more technical. 
Essentially, without this assumption the proof of 
\cite[Lemma~11.9]{dualizingspheres} may not hold. Because of this, the residually 
finite condition appears to be crucial to these arguments. Indeed, in 
\cref{nonequivariatnappendix} we show that the results of \cref{dualityresults} cannot hold as stated for 
a particular example of $E$ not residually finite. On the other hand, the condition 
that $E$ has all automorphisms merely serves to place us in a familiar setting by 
working with the group $\stab$ rather than an obscure subgroup. 
If $E$ does not have all automorphisms then some version of the results of \cref{dualityresults} should 
still hold with modifications made to work with $\Aut_{k}(\Gamma)$ in place of $\stab$.

\subsection{Dualizing a pro-Galois extension}
In this section we record a general formula for the dual of a $\mK(h)$-local pro-Galois extension 
of spectra. This is essentially the observation that \cite[Lemma~11.4]{dualizingspheres} and its 
proof cary over to this general setting. 
We note that the proof also carries over directly to the case of global (i.e. unlocalized)
pro-Galois extensions though not necessarily to $F$-local extensions for any spectrum $F$. 
We begin by introducing some important maps available to us in the category of 
$\mK(h)$-local spectra with $G$-action, $\spectra^{BG}_{\mK(h)}$, for $G$ a finite group. 

Let $X\in\spectra^{BG}_{\mK(h)}$ be a $\mK(h)$-local spectrum with $G$-action and view 
the fixed point spectra $X^{hH} = F(G/H_+,X^h)^G$ for $H\subseteq G$ as elements of 
$\spectra^{BG}_{\mK(h)}$ with residual $G$-action.

\begin{defn}
Let $K\subseteq H\subseteq G$. 
\begin{enumerate}[(a)]
\item The \textit{restriction} map \[\res:X^{hH}\to X^{hK}\] is defined as 
$F(p_+,X^h)^G$ where $p:G/K\to G/H$ is the quotient map.

\item The \textit{transfer} map \[\trans:X^{hK}\to X^{hH}\] is defined as 
$F(D(p)_+,X^h)^G$ where $D(p):G/H\to G/K$ is the Spanier-Whitehead dual to the quotient map in 
$\spectra^{G}$.
\end{enumerate}
\end{defn}

Now if $R$ is an $\E_\infty$-ring in $\spectra^{BG}_{\mK(h)}$ then the multiplication gives rise 
to a pairing 
\[R^{hH}\smsh_{R^{hG}} R^{hH}\to R^{hH}\xrightarrow{\trans}R^{hG}\] for any $H\leq G$.
Let \[d_H^G:R^{hH}\to F_{R^{hG}}(R^{hH},R^{hG})\] denote the adjoint to the pairing.

\begin{prop}\label{frobeniusdiagram}
Suppose $R$ is an $\E_\infty$-ring in $\spectra^{BG}_{\mK(h)}$ for $G$ a finite group. Let 
$K\subseteq H\subseteq G$. Then there is a commutative diagram
\begin{center}
\begin{tikzcd}
R^{hK}\ar[r,"d_K^G"]\ar[d,"\trans",swap] & F_{R^{hG}}(R^{hK},R^{hG})\ar[d,"F(\res{,} R^{hG})"]\\
R^{hH}\ar[r,"d_H^G"] & F_{R^{hG}}(R^{hH},R^{hG})
\end{tikzcd}
\end{center}
\noindent in the category of $\mK(h)$-local $R^{hG}$-modules with $G$-action, $\Mod_{\mK(h),R^{hG}}^{BG}$. 
\end{prop}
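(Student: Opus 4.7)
The plan is to recognize the diagram as encoding Frobenius reciprocity (the projection formula) for the pair $(\res,\trans)$ together with transitivity of transfer. Both are classical identities in equivariant stable homotopy that descend through $\mK(h)$-localization and the cofree extension $\spectra^{BG}\to\spectra^G$.

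First I would take adjoints. Under the tensor-hom adjunction in $\Mod_{\mK(h),R^{hG}}^{BG}$, each composite in the diagram corresponds to an $R^{hG}$-linear pairing $R^{hK}\wedge_{R^{hG}} R^{hH}\to R^{hG}$. Unwinding the definitions of $d_K^G$ and $d_H^G$, the clockwise composite $F(\res,R^{hG})\circ d_K^G$ has adjoint
\[
R^{hK}\wedge_{R^{hG}} R^{hH}\xrightarrow{\id\wedge\res_K^H} R^{hK}\wedge_{R^{hG}} R^{hK}\xrightarrow{\mu} R^{hK}\xrightarrow{\trans_K^G} R^{hG},
\]
while the counterclockwise composite $d_H^G\circ\trans_K^H$ has adjoint
\[
R^{hK}\wedge_{R^{hG}} R^{hH}\xrightarrow{\trans_K^H\wedge\id} R^{hH}\wedge_{R^{hG}} R^{hH}\xrightarrow{\mu} R^{hH}\xrightarrow{\trans_H^G} R^{hG}.
\]

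Second, using the transitivity $\trans_K^G=\trans_H^G\circ\trans_K^H$, which follows from $D(qp)\simeq D(p)\circ D(q)$ applied to $G/K\xrightarrow{p} G/H\xrightarrow{q} G/G$, I would rewrite the first pairing as $\trans_H^G\circ\trans_K^H\circ\mu\circ(\id\wedge\res_K^H)$. Agreement of the two pairings then reduces to the projection formula
\[
\trans_K^H\circ\mu_{R^{hK}}\circ(\id\wedge\res_K^H) \;=\; \mu_{R^{hH}}\circ(\trans_K^H\wedge\id),
\]
as maps $R^{hK}\wedge_{R^{hG}} R^{hH}\to R^{hH}$. This is obtained by applying $F(-,R^h)^G$ to a standard commutative square of finite $G$-sets expressing compatibility between the diagonal on $G/H_+$ and the transfer along $p_+\colon G/K_+\to G/H_+$, and then invoking the $\E_\infty$-multiplication on $R^h$ (viewed in $\CAlg(\spectra^G)$ via the cofree extension).

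The main obstacle, though mild, is checking the projection formula carefully at the level of cofree $\mK(h)$-local $G$-spectra rather than just in $\spectra^G$. Since both the cofree extension and $\mK(h)$-localization are lax symmetric monoidal and preserve the finite limits defining $\trans$ and $\res$, the formula transports from the standard equivariant setting with routine bookkeeping; the only subtlety is tracking that all the maps involved are genuinely $R^{hG}$-linear, which is ensured by starting from the $R^{hG}$-module structure inherited by every $R^{hH}$.
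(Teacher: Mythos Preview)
The paper states this proposition without proof, so there is no argument to compare against directly. Your approach is correct and is the standard one: pass to adjoints, reduce the square to an equality of pairings $R^{hK}\smsh_{R^{hG}}R^{hH}\to R^{hG}$, and then invoke transitivity of transfer together with the projection formula $\trans_K^H(x\cdot\res_K^H(y))=\trans_K^H(x)\cdot y$. Your identification of the two adjoint pairings is accurate, and the reduction to Frobenius reciprocity is exactly what is needed.

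One small comment: your justification of the projection formula via ``applying $F(-,R^h)^G$ to a standard commutative square of finite $G$-sets'' is a bit terse. The cleanest way to see it in this setting is to note that $\trans_K^H$ is a map of $R^{hH}$-modules, where $R^{hK}$ is an $R^{hH}$-module via restriction; this is the module-theoretic reformulation of the projection formula and follows immediately from the definition of transfer as $F(D(p)_+,R^h)^G$ together with the fact that $D(p)$ is a map of $G/H_+$-comodules in $\spectra^G$. Since you are already working with the cofree extension and $\mK(h)$-localization is smashing on this category, no further bookkeeping is required beyond what you indicate.
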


Let $\cG\cong\lim_j\cG_j$ be a profinite group with each $\cG_j$ finite and let 
$A\to B$ be a $\mK(h)$-local pro-$\cG$-Galois extension of commutative ring spectra.  
Then we have a $\cG_j$-Galois extension $A\to B_j$ for each $j$ and 
\[B \simeq L_{\mK(h)}\colim_j B_j\] in the category $\Mod_{\mK(h),A}^{BG}$.

\begin{thm}\label{dualtransferformula}
There is an equivalence in $\Mod_{\mK(h),A}^{BG}$ between the $A$-linear dual of $B$ 
and the following limit over transfer maps 
\[D_A B\simeq \lim_{\trans,j} B_j.\] 
\end{thm}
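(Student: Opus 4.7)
The plan is to reduce the claim to the case of a finite Galois extension by exchanging a colimit for a limit, and then invoke Clausen's finite-level self-duality result together with Proposition \ref{frobeniusdiagram}.

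First, I would rewrite $D_A B = F_A(B, A)$ and use the equivalence $B \simeq L_{\mK(h)}\colim_j B_j$ in $\Mod_{\mK(h),A}^{BG}$. Since $F_A(-, A)$ sends colimits to limits and mapping into the $\mK(h)$-local object $A$ is insensitive to $\mK(h)$-localization of the source, this yields
\[D_A B \simeq \lim_j F_A(B_j, A).\]

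Next, for each $j$, the map $A \to B_j$ is a finite $\cG_j$-Galois extension, so by a theorem of Clausen \cite{ClausenAduality} the trace pairing is perfect. Concretely, the adjoint
\[d_{\{e\}}^{\cG_j}: B_j \xrightarrow{\simeq} F_A(B_j, A)\]
to the transfer pairing $B_j \smsh_A B_j \to B_j \xrightarrow{\trans} A$ is an equivalence in $\Mod_{\mK(h),A}^{BG}$. This is the main structural input: the Galois condition forces self-duality at the finite level.

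Finally, I would identify the transition maps. For $j'\geq j$, writing $H = \ker(\cG_{j'}\twoheadrightarrow\cG_j)$ we have $B_j \simeq B_{j'}^{hH}$, and the colimit structure map $B_j \to B_{j'}$ is the canonical restriction. Dualizing produces the transition map $F_A(B_{j'},A)\to F_A(B_j,A)$ in the inverse system. Applying Proposition \ref{frobeniusdiagram} with $G=\cG_{j'}$, $K=\{e\}$, and $H$ as above, together with the naturality of the duality map $d$, I would identify this dualized restriction with the transfer $B_{j'}\to B_j$. Assembling these identifications across all $j, j'$ gives $D_A B \simeq \lim_{\trans, j} B_j$.

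The main obstacle is the finite-level self-duality input from Clausen's theorem; once it is in hand, the remainder is formal bookkeeping, with the principal subtlety being the compatibility of the duality maps $d_{\{e\}}^{\cG_j}$ across the inverse system, which is precisely what Proposition \ref{frobeniusdiagram} is designed to encode.
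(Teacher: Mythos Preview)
Your approach is essentially identical to the paper's: dualize the colimit presentation of $B$ to a limit of $F_A(B_j,A)$, invoke finite-level self-duality via the trace pairing, and use Proposition~\ref{frobeniusdiagram} to match the transition maps with transfers.

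The one substantive correction concerns your attribution of the finite-level self-duality. The equivalence $d_{\{e\}}^{\cG_j}: B_j \xrightarrow{\simeq} F_A(B_j,A)$ for a finite Galois extension is not Clausen's theorem; it is a standard consequence of the Galois condition and appears as \cite[Proposition~6.4.7]{rognesgalois}, which is exactly what the paper cites. Clausen's theorem in this paper (\cref{LinearizationHypothesis}) is the much deeper statement that $I_{\estab}\simeq_{\estab} S^{\lieg}$, identifying the dualizing spectrum of the stabilizer group with a linearized sphere; it plays no role in the present argument and enters only later, in \cref{dualofEclausen}. So the step you flag as ``the main obstacle'' is in fact elementary given Rognes's work, and your invocation of Clausen here is a misattribution rather than a gap in the mathematics.
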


\begin{proof}
Each $A\to B_j$ is a finite Galois extension with Galois group $\cG_j$
and so \cite[Proposition~6.4.7]{rognesgalois} implies that 
\[D_A B_j\simeq B_j\] with an explicit equivalence given by 
\[d_{e}^{\cG_j}:B_j\to F(B_j,(B_j)^{h\cG_j}) \simeq D_A B.\]
As above, we have that this equivalence is in fact equivariant with respect to the 
natural $\cG$-action. Then since $B = L_{\mK(h)}\colim_j B_j$, \cref{frobeniusdiagram} implies 
that \[\lim_{\trans,j}B_j\simeq \lim_{F(\res,A),j} F_A(B_j,A) 
\simeq D_A B.\] Since this is a limit of $\cG$-equivariant equivalences
the resulting equivalence is itself $\cG$-equivariant. 
\end{proof}

\subsection{Dualizing Lubin-Tate spectra}\label{dualityresults}

Let $E$ be a good Lubin-Tate spectrum.
The goal of this section is to review the arguments of \cite[Section~11]{dualizingspheres}
establishing an equivariant formula for the dual of $E$ with the action by its stabilizer 
group $\estab$. We begin by defining what will turn out to be the dualizing spectrum. 

Let \[\Gamma_i = 1 + p^i\cO.\]
These give a nested sequence of open normal subgroups of $\estab$
\[\cdots\leq\Gamma_{i+1}\leq \Gamma_i\leq \cdots\leq\Gamma_1\leq\Gamma_0 = \stab \leq \estab,\]
which necessarily have finite index. For $j>1$ we have an inclusion of finite groups 
$\Gamma_{i+1}/\Gamma_{i+j}\to\Gamma_i/\Gamma_{i+j}$ 
giving rise to an $\estab$-equivariant transfer
\[\Sigma^\infty_+ B(\Gamma_i/\Gamma_{i+j})\to\Sigma^\infty_+ B(\Gamma_{i+1}/\Gamma_{i+j}).\]
Taking the homotopy limit gives a $\estab$-equivariant transfer
\[\textrm{tr}:\Sigma_+^\infty B\Gamma_i\to \Sigma_+^\infty B\Gamma_{i+1}.\]

\begin{defn}[{\cite[Definition~5.4]{dualizingspheres}}]\label{dualsphere}
Let $I_{\estab}$ be the $\estab$-spectrum given by the $p$-completed colimit in over transfers 
\[I_{\estab} = (\colim_{\text{tr},i}\Sigma^\infty_+ B\Gamma_i)_p^\wedge.\]
\end{defn}

The spectrum of \cref{dualsphere} was tailor-made for the following theorem which generalizes 
the classical Serre duality for continuous group cohomology of compact $p$-adic Lie groups. 

\begin{thm}[{\cite[Theorem~11.16]{dualizingspheres}}]\label{reldual}
There is a $\estab$-equivariant $\mK(h)$-local equivalence 
\[DE \simeq_{\estab} F(I_\estab,E)\simeq_\estab I_{\estab}^{-1}\smsh E,\]
where $\estab$ acts on $DE$ by its action on $E$, by conjugation on $F(I_\estab,E)$, and 
diagonally on $I_{\estab}^{-1}\smsh E$. 
\end{thm}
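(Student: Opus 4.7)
The plan is to combine the dual transfer formula \cref{dualtransferformula} with the pro-$\estab$-Galois structure on $L_{\mK(h)}\sphere \to E$ of \cref{galoiscor}, and then to recognize the resulting limit as $F(I_\estab, E)$ via the universal property of function spectra applied to the very colimit defining $I_\estab$. The invertibility of $I_\estab$ in $\spectra^{B\estab}_{\mK(h)}$ then gives the second equivalence for free.

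First, I would set up a cofinal tower of finite Galois subextensions. Since $E$ is good, $\stab$ has finite index in $\estab$ (the residue field is $\F_{p^r}$, so $\Gal(k/\F_p)$ is finite), and the groups $\Gamma_j = 1+p^j\cO$ are $\Gal$-stable (since $p\cO$ is Galois-stable), hence open normal in $\estab$ with $\estab/\Gamma_j$ finite, forming a cofinal system of open normal subgroups. Applying \cref{dualtransferformula} with $A = L_{\mK(h)}\sphere$, $B = E$ and $B_j = E^{h\Gamma_j}$ yields an $\estab$-equivariant $\mK(h)$-local equivalence
\[DE \simeq \lim_{\trans,j} E^{h\Gamma_j}\]
in $\Mod^{B\estab}_{\mK(h), L_{\mK(h)}\sphere}$, where the structure maps are transfers $E^{h\Gamma_{j+1}} \to E^{h\Gamma_j}$.

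Next I would identify the right-hand side with $F(I_\estab, E)$. Since $E$ is $\mK(h)$-local and hence $p$-complete, the universal property of $p$-completion gives $F(I_\estab, E) \simeq F(\colim_{\trans,i} \Sigma_+^\infty B\Gamma_i, E) \simeq \lim_i F(\Sigma_+^\infty B\Gamma_i, E)$. For each $i$, the natural $\estab$-action on $B\Gamma_i$ by conjugation of subgroups (together with $\Gamma_i$ acting on itself by inner automorphisms) makes $F(\Sigma_+^\infty B\Gamma_i, E) \simeq E^{h\Gamma_i}$ an equivalence in $\spectra^{B\estab}$, with the $\estab$-action on the right being the residual conjugation action. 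The crucial compatibility to verify is that under $F(-,E)$, the Becker-Gottlieb transfer $\Sigma_+^\infty B\Gamma_i \to \Sigma_+^\infty B\Gamma_{i+1}$ recovers exactly the transfer $E^{h\Gamma_{i+1}} \to E^{h\Gamma_i}$ of the previous subsection. This is a standard naturality of Atiyah duality for covers, applied to the $\Gamma_i/\Gamma_{i+1}$-cover $B\Gamma_{i+1} \to B\Gamma_i$. Assembling gives the first equivalence $DE \simeq_\estab F(I_\estab, E)$.

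The second equivalence $F(I_\estab, E) \simeq I_\estab^{-1} \smsh E$ then follows immediately once one knows $I_\estab$ is invertible in the $\mK(h)$-local category with $\estab$-action, with the diagonal action on $I_\estab^{-1}\smsh E$ matching the conjugation action on $F(I_\estab, E)$. The invertibility of $I_\estab$ is a principal result of \cite{dualizingspheres} (the Serre-type duality for the compact $p$-adic Lie group $\estab$), which I would invoke. The main obstacle in carrying out this plan is the equivariance bookkeeping throughout: one must carefully track that the $\estab$-actions arising on $DE$, on the $E^{h\Gamma_j}$ (by conjugation on cosets), on $F(\Sigma_+^\infty B\Gamma_j, E)$ (by conjugation of subgroups together with the given action on $E$), and on $F(I_\estab, E)$ are all intertwined by the above equivalences, and that the transfer transition maps agree on both sides. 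The invertibility of $I_\estab$, while nontrivial, is imported from \cite{dualizingspheres} and need not be reproved here.
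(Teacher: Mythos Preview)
Your proposal contains a genuine gap at the step where you claim the identification $F(\Sigma_+^\infty B\Gamma_i, E)\simeq E^{h\Gamma_i}$. This equivalence holds only when the $\Gamma_i$-action on the target is trivial: for a spectrum $X$ with trivial $G$-action one has $X^{hG}\simeq F((BG)_+,X)$, but for a nontrivial action the right-hand side does not see the action at all, while the left-hand side is the genuine homotopy fixed points. Here $\Gamma_i$ acts on $E$ through a nontrivial (indeed faithful) action of the stabilizer group, so the claimed identification fails at every finite stage. Consequently your inverse system $\lim_i F(\Sigma_+^\infty B\Gamma_i,E)$ is not the same as $\lim_{\trans,i} E^{h\Gamma_i}$, and the argument collapses.

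This is exactly the obstacle the paper's proof is organized around. The paper first decomposes $E\simeq L_{\mK(h)}\colim_{\res,j} E^{h\Gamma_j}$, then interchanges the limit over transfers in $i$ with this colimit in $j$ (the technically hard step, \cite[Lemma~11.9]{dualizingspheres}, and precisely where the residually finite hypothesis is used), and only then invokes the identification $(E^{h\Gamma_j})^{h\Gamma_i}\simeq F(\Sigma_+^\infty B\Gamma_i, E^{h\Gamma_j})$, valid because for $i$ large relative to $j$ the group $\Gamma_i$ acts trivially on $E^{h\Gamma_j}$ (\cite[Lemma~11.12]{dualizingspheres}). Your shortcut attempts to apply this last step with $E$ in place of $E^{h\Gamma_j}$, bypassing both the decomposition and the limit--colimit swap; but those are not bookkeeping, they are the substance of the argument. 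The equivariance bookkeeping you flag as the main obstacle is in fact routine; the real content is producing, for each $i$, a target on which $\Gamma_i$ genuinely acts trivially so that the $B\Gamma_i$ description becomes available.
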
 

\begin{proof}
We provide the reader with a sketch of the proof appearing in \cite[Section~11]{dualizingspheres} 
for convenience. From \cref{dualtransferformula} we have 
\[DE \simeq_{\estab} \textrm{lim}_{\textrm{tr}, i} E^{h\Gamma_i}.\]
The next steps involve decomposing $E$ and rearranging some limits and colimits. 
We start by rewriting $E$ as a $\mK(h)$-local colimit. For this, it is convenient to introduce 
a tower $\{M_I\}$ of generalized Moore spectra as in \cite[Section~4]{hoveystrickland}.
Then we have
\[DE \simeq_\estab \lim_{\trans,i}E^{h\Gamma_i} 
\simeq_\estab \lim_{\trans,i}(L_{\mK(h)}\colim_{\res,j}E^{h\Gamma_j})^{h\Gamma_i}
\simeq_\estab \lim_{\trans,i} \lim_I(\colim_{\res,j}E^{h\Gamma_j}\smsh M_I)^{h\Gamma_i}.\]
We can now use the $\Gamma_i$-HFPSS to show that 
\[(\colim_{\res,j}E^{h\Gamma_j}\smsh M_I)^{h\Gamma_i}\simeq \colim_{\res,j}(E^{h\Gamma_j})^{h\Gamma_i}
\smsh M_I.\]
The key point is that $\pi_\ast E^{h\Gamma_j}\smsh M_I$ are discrete $\Gamma_j$-modules 
and continuous cohomology commutes with filtered colimits of discrete modules. This 
is essentially the content of \cite[Lemma~11.7]{dualizingspheres}. 

We now have 
\[DE\simeq_\estab\lim_I\lim_{\trans,i}\colim_{\res,j}(E^{h\Gamma_j})^{h\Gamma_i}\smsh M_I,\]
and the next step is to swap the limit along transfers with the colimit along restrictions. 
This is the most technically difficult step and we simply reference the proof of 
\cite[Lemma~11.9]{dualizingspheres}. We emphasize that 
this is where the assumption that $E$ is residually finite comes in. The key point is that if 
$E$ is residually finite then both $\pi_\ast(E^{h\Gamma_j}\smsh M_I)$ and $\pi_\ast(E\smsh M_I)$ 
are finite in each degree and so the limit of the $\Gamma_i$-HFPSS along transfers gives 
a spectral sequence as any $\lim^1$-terms will vanish. The proof is then reduced to establishing an 
isomorphism of $E_2$-pages between two spectral sequences: the first being the 
colimit of the limit of the $\Gamma_i$-HFPSS and the second being the limit of the colimit  
of the $\Gamma_i$-HFPSS. 

The last step is to observe that for large enough $i > j$, the $\Gamma_i$ action on 
$E^{h\Gamma_j}$ is trivial and so as one would expect, we have 
\[(E^{h\Gamma_j})^{h\Gamma_i}\simeq_{\estab} F(\Sigma_+^\infty B\Gamma_i,E^{h\Gamma_j}).\]
This is proved in \cite[Lemma~11.12]{dualizingspheres}.

Putting all this together, we have 
\begin{equation}
\begin{split}
    DE &\simeq_\estab L_{\mK(h)}\colim_{\res,j}\lim_{\trans,i}F(\Sigma_+^\infty B\Gamma_i,E^{h\Gamma_j})\\
    &\simeq_\estab L_{\mK(h)}\colim_{\res,j}F(I_\estab,E^{h\Gamma_j})\\
    &\simeq_\estab F(I_\estab,E),
\end{split}
\end{equation}
\noindent where the last step follows from dualizability of $I_\estab$. 
\end{proof}

The applications of this theorem on its own are limited by the fact that the $\estab$-action 
on $I_\estab$ is difficult to get a handle on. 
Using the groups $p^i\cO$ in place of $\Gamma_i$ we can similarly form the following 
spectrum which does not have this disadvantage.

\begin{defn}[\cite{dualizingspheres} Equation 5.7]\label{linearsphere}
Let $S^\lieg$ be the $\estab$-spectrum given by the $p$-complete colimit over transfers
\[S^{\lieg} = (\colim_{\trans,i}\Sigma^\infty_+ B(p^i\cO))_p^\wedge.\]
\end{defn}

The following theorem of Clausen gives us access to the applications 
of the theorems above. See \cite{ClausenAduality} for a proof outline. 

\begin{thm}[Clausen]\label{LinearizationHypothesis}
There is an equivalence of $p$-complete $\estab$-spectra
\[I_\estab\simeq_\estab S^\lieg.\]
\end{thm}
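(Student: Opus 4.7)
The plan is to compare the two $p$-complete towers via the $p$-adic exponential and logarithm. For $i$ sufficiently large (depending on $p$), the power series for $\exp$ and $\log$ converge and give mutually inverse $\estab$-equivariant homeomorphisms of $p$-adic analytic manifolds
\[\exp: p^i\cO \xrightarrow{\sim} 1 + p^i\cO = \Gamma_i,\]
equivariant for the conjugation action of $\estab$. Since $\cO$ is non-commutative in general, $\exp$ fails to be a group homomorphism, so $B\Gamma_i$ and $B(p^i\cO)$ are not equivalent spaces at each finite level. However, the Baker--Campbell--Hausdorff formula
\[\exp(x)\exp(y) = \exp\bigl(x + y + \tfrac{1}{2}[x,y] + \cdots\bigr)\]
shows that the failure of $\exp$ to be a homomorphism is expressible in iterated commutators of $x,y$, all of which lie strictly deeper in the $p$-adic filtration than $x$ and $y$ themselves.

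The idea is to use this `approximate group homomorphism' to construct compatible comparison maps between the two towers, and argue that they become genuine equivalences after taking $p$-completed colimits. The BCH error terms should be killed because the colimit along transfers effectively quotients by deeper layers of the $p$-adic filtration. First I would set up the comparison level by level, using the exponential bijection to match $\Sigma_+^\infty B\Gamma_i$ with $\Sigma_+^\infty B(p^i\cO)$ up to corrections that vanish after passing one step further down the tower. Then I would check that the transfer squares commute up to such negligible corrections, and finally pass to the $p$-completed colimit via a cofinality argument.

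The hard part is promoting this sketch to a homotopy-coherent, $\estab$-equivariant statement in $p$-complete spectra. This is the content of Clausen's A-duality framework, which realizes both $I_\estab$ and $S^\lieg$ as incarnations of a universal dualizing spectrum attached to $\estab$ viewed as a compact $p$-adic Lie group. Because this dualizing spectrum is a functor of the underlying $p$-adic analytic manifold (and not of the group multiplication), and because $\exp$ is an $\estab$-equivariant diffeomorphism of such manifolds, the two constructions must agree. Verifying that the universal property is formulated in a way that genuinely tolerates the non-commutativity, and that equivariance persists through the comparison, is where the technical heart of Clausen's argument lies.
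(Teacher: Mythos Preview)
The paper does not prove this theorem; it is stated as a result of Clausen with a citation to \cite{ClausenAduality} for a proof outline, and the subsequent remark only observes that the underlying (nonequivariant) equivalence is easy and that partial equivariant results were obtained in \cite{dualizingspheres} by different methods. There is therefore no in-paper argument to compare your proposal against.

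That said, your sketch is a reasonable description of the heuristic behind the result: the $p$-adic exponential identifies the underlying $\estab$-equivariant $p$-adic manifolds $p^i\cO$ and $\Gamma_i$ for $i\gg 0$, and the BCH error terms lie deeper in the filtration, so one expects them to wash out in the colimit along transfers. But as you yourself acknowledge, this is not yet a proof. The essential difficulty is exactly what you flag at the end: the objects $\Sigma^\infty_+ B\Gamma_i$ and $\Sigma^\infty_+ B(p^i\cO)$ genuinely depend on the group multiplications, not just on the manifolds, so one needs a precise mechanism explaining why the $p$-completed colimit over transfers forgets the multiplication and remembers only the underlying $\estab$-manifold. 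Your appeal to ``Clausen's A-duality framework'' at this point is a black box, not an argument; in particular, the assertion that the dualizing spectrum ``is a functor of the underlying $p$-adic analytic manifold'' is the entire content of the theorem, not an input one can invoke. So your proposal is an accurate outline of where the difficulty lies, but it does not supply the missing idea any more than the paper does---both defer to Clausen.
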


\begin{rmk}
It is clear that this is an underlying equivalence as each of these can be shown to have 
the homotopy type of $p$-adic spheres. 
The authors of \cite{dualizingspheres} proved that this equivalence could be made equivariant 
for certain finite subgroups of $\estab$ using different methods than Clausen. 
See Corollary 11.18 in \cite{dualizingspheres} for a precise statement.  
However, some of our results will require the full power of Clausen's \cref{LinearizationHypothesis}. 
\end{rmk}

\begin{cor}\label{dualofEclausen}
There is a $\estab$-equivariant $\mK(h)$-local equivalence 
\[DE\simeq_\estab F(S^\lieg,E)\simeq_\estab S^{-\lieg}\smsh E.\]
\end{cor}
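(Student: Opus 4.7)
My plan is to derive the corollary as a direct consequence of \cref{reldual} and \cref{LinearizationHypothesis}. First, I would invoke \cref{reldual} to obtain the $\estab$-equivariant $\mK(h)$-local equivalence
\[DE \simeq_\estab F(I_\estab, E) \simeq_\estab I_\estab^{-1} \smsh E.\]
Then, substituting Clausen's identification $I_\estab \simeq_\estab S^\lieg$ of $p$-complete $\estab$-spectra from \cref{LinearizationHypothesis} will yield both claimed equivalences.

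The minor bookkeeping I would carry out concerns the compatibility of this substitution with the $\mK(h)$-local structure. Since $\mK(h)$-local spectra are $p$-complete and $E$ is $\mK(h)$-local, the function spectrum $F(-, E)$ is insensitive to $p$-completion of its source, so Clausen's equivalence of $p$-complete $\estab$-spectra is enough to conclude $F(I_\estab, E) \simeq_\estab F(S^\lieg, E)$. For the smash product formulation, I would note that $I_\estab$ is dualizable as a $\mK(h)$-local $\estab$-spectrum (its underlying spectrum is a $p$-adic sphere, which is used in the final step of the proof of \cref{reldual}), hence so is $S^\lieg$, and therefore $F(S^\lieg, E) \simeq_\estab S^{-\lieg} \smsh E$ with the diagonal $\estab$-action.

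There is essentially no technical obstacle here, since the statement amounts to formally combining two previously established results. The genuinely deep content, namely the equivariant identification of the dualizing spectrum $I_\estab$ with the representation sphere $S^\lieg$, is subsumed by Clausen's \cref{LinearizationHypothesis}, which is invoked as a black box; all remaining manipulation is routine rewriting in the $\mK(h)$-local category.
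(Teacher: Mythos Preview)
Your proposal is correct and matches the paper's approach exactly: the corollary is stated without proof in the paper, as it follows immediately by substituting Clausen's equivalence from \cref{LinearizationHypothesis} into \cref{reldual}. The extra bookkeeping you outline (insensitivity of $F(-,E)$ to $p$-completion of the source, and dualizability of $S^\lieg$) is sound and simply makes explicit what the paper leaves implicit.
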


With this description of the dual at hand the next task is to give an explicit description 
of the action of $\estab$ on $S^\lieg$ for finite subgroups.

\begin{prop}[{\cite[Proposition~9.14]{dualizingspheres}}]\label{RepProp}
Let $G<\stab$ be a finite subgroup. Suppose there is a finitely generated free
abelian group $L\subset\cO$ with the properties that
\begin{enumerate}[(a)]
\item $L$ is stable under the conjugation action of $G$ on $\cO$
\item $\mathbb{Q}_p\otimes L\cong\mathbb{Q}_p\otimes_{\mathbb{Z}_p}\cO$. 
\end{enumerate}
Let $V = \R\otimes L$ and $S^V$ the one-point compactification of $V$. Then there is an 
$G$-equivariant map $S^V\to S^\lieg$ which becomes an equivalence after $p$-completion.
\end{prop}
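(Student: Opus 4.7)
The plan is to factor the desired map through an auxiliary $p$-completed colimit built from the lattice $L$ itself, then compare it with the defining colimit of $S^\lieg$. Concretely, using the $G$-stable, finite-index inclusions $p^{i+1}L\subset p^i L$, I would form the $G$-equivariant $p$-completed transfer colimit
\[
S^V_L := \bigl(\colim_{\trans,i}\Sigma^\infty_+ B(p^i L)\bigr)^\wedge_p,
\]
which is a direct lattice analogue of $S^\lieg$, and then carry out two identifications.

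The first identification is $S^V_L\simeq_G (S^V)^\wedge_p$. For each $i$, $B(p^i L)$ is a torus with $G$-action whose equivariant stable splitting exhibits $S^V$ as its top summand alongside lower summands indexed by $G$-subrepresentations of $V$. The transfer along the inclusion $p^{i+1}L\subset p^i L$ acts as a $p$-adic unit on the top summand and nilpotently on the lower ones, so after $p$-completion the colimit projects onto the top cell. This step is an equivariant upgrade of the classical Pontrjagin–Thom description of suspension spectra of tori and is the technical heart of the argument. The second identification is $S^V_L\simeq_G S^\lieg$. Both $L^\wedge_p := \mathbb{Z}_p\otimes_{\mathbb{Z}}L$ and $\cO$ are $G$-stable $\mathbb{Z}_p$-lattices in the common $\mathbb{Q}_p$-vector space $\mathbb{Q}_p\otimes_{\mathbb{Z}_p}\cO$ by hypothesis (b), so there exist integers $a,b\geq 0$ with $p^a\cO\subseteq L^\wedge_p\subseteq p^{-b}\cO$. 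Shifting these $G$-equivariant inclusions by powers of $p$ and iterating the transfer construction, the two towers $\{B(p^i L)\}_i$ and $\{B(p^i\cO)\}_i$ become mutually cofinal after $p$-completion, producing the desired equivalence. Composing the two identifications yields a $G$-equivariant map $S^V\to S^\lieg$ which is an equivalence after $p$-completion.

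The main obstacle is the first identification. Nonequivariantly it is a routine consequence of the stable splitting of a torus, but equivariantly one must verify that the top summand really is $S^V$ with its genuine $G$-representation structure, and that the transfers act as $p$-adic units on this top piece while killing the other summands $p$-adically; these verifications require some care to track the $G$-action through the splitting maps.
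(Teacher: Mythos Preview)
The paper does not give its own proof of this proposition; it is quoted from \cite[Proposition~9.14]{dualizingspheres} and used as a black box. So there is nothing in the present paper to compare your attempt against.

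That said, your outline is essentially the argument appearing in the cited reference. There one forms exactly the auxiliary object you call $S^V_L$, identifies it with $(S^V)^\wedge_p$ using the equivariant stable splitting of the torus $B(p^iL)$ (the transfer for $p^{i+1}L\subset p^iL$ is a $p$-adic unit on the top summand $S^V$ and multiplication by a positive power of $p$ on each lower summand, so those summands vanish in the $p$-completed colimit), and then uses the $G$-equivariant commensurability $p^a\cO\subset L^\wedge_p\subset p^{-b}\cO$ to interleave the two transfer towers and conclude $S^V_L\simeq_G S^\lieg$. Your identification of the first step as the one requiring care is accurate; the point there is that the equivariant splitting of $\Sigma^\infty_+ B(p^iL)$ is indexed by the exterior powers of the $G$-representation $V$, and one checks the transfer acts on $S^{\Lambda^k V}$ by $p^{n-k}$ where $n=\mathrm{rk}\,L$.
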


\begin{defn}
    We will refer to a $G$-representation $V$ as in \cref{RepProp} as a 
    \textit{dualizing $G$-representation} for $E$.
\end{defn}

\begin{thm}
    Let $E$ be a good Lubin-Tate spectrum.
    Let $G\leq\stab$ be a finite subgroup and $V$ be a dualizing $G$-representation for $E$. 
    Then there is a $G$-equivalence \[DE\simeq_G \Sigma^{-V} E.\]
\end{thm}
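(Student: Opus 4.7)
The plan is to chain together \cref{dualofEclausen} and \cref{RepProp}. Indeed, the work has essentially already been done: \cref{dualofEclausen} (which relies on Clausen's \cref{LinearizationHypothesis}) supplies an $\estab$-equivariant $\mK(h)$-local equivalence $DE\simeq_\estab S^{-\lieg}\smsh E$, and \cref{RepProp} provides a $G$-equivariant map $S^V\to S^\lieg$ becoming an equivalence after $p$-completion, whenever $V$ is a dualizing $G$-representation. So the proof is a formal combination.

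Explicitly, I would first restrict the $\estab$-action to the finite subgroup $G\leq\stab\leq\estab$, producing a $G$-equivariant $\mK(h)$-local equivalence $DE\simeq_G S^{-\lieg}\smsh E$. Next, I would smash the $G$-map $S^V\to S^\lieg$ of \cref{RepProp} with $E$ to obtain a $G$-equivariant map $S^V\smsh E\to S^\lieg\smsh E$. Since $E$ is $\mK(h)$-local and in particular $p$-complete, the $p$-completion equivalence $S^V{}^\wedge_p\simeq_G S^\lieg$ upgrades to a genuine $G$-equivariant equivalence $S^V\smsh E\simeq_G S^\lieg\smsh E$; dually (or by smashing with $S^{-V-\lieg}$), this gives $S^{-\lieg}\smsh E\simeq_G S^{-V}\smsh E\simeq_G \Sigma^{-V}E$. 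Concatenating with the first equivalence yields the desired $DE\simeq_G \Sigma^{-V}E$.

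The only non-cosmetic point is verifying that the $p$-completed equivalence survives after smashing with $E$, but this is immediate from $\mK(h)$-locality of $E$ (or more weakly, $p$-completeness of the homotopy groups), so there is no real obstacle here. The substance of the theorem is packaged into the two inputs, and this final statement is essentially a bookkeeping corollary that records the output in a convenient form for later heights-and-subgroups computations in \cref{therep}.
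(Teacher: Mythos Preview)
Your proposal is correct and follows exactly the paper's approach: the paper's proof is the single sentence ``This follows from \cref{dualofEclausen} and \cref{RepProp},'' and you have simply unpacked what that means, including the (standard) observation that the $p$-completion equivalence of \cref{RepProp} becomes a genuine equivalence after smashing with the $\mK(h)$-local spectrum $E$.
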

\begin{proof}
This follows from \cref{dualofEclausen} and \cref{RepProp}.
\end{proof}

\begin{rmk}
Greenlees-Sadofsky Tate vanishing \cite{TateVanishing} allows us 
to apply the contents of this section to study the duals of higher real $K$-theories. 
Explicitly, their vanishing result implies that the canonical map 
\[E_{hG}\xrightarrow{\sim}E^{hG}\]  is an equivalence for any finite group $G$ and any 
$\mK(h)$-local $E$. Thus 
\[D(E^{hG})\simeq D(E_{hG})= F(EG_+\smsh_G E,L_{\mK(h)}\sphere)\simeq F(EG_+,DE)^G = (DE)^{hG}.\]
We will from now on write simply $DE^{hG}$ as there is no ambiguity. 
\end{rmk}

\begin{cor}
Let $E$ be a good Lubin-Tate spectrum.
Let $G<\stab$ be a finite subgroup and $V$ be a dualizing $G$-representation for $E$. 
Then \[DE^{hG}\simeq (\Sigma^{-V} E)^{hG}.\]
\end{cor}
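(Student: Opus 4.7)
The plan is to obtain this corollary as a direct consequence of the preceding theorem together with the Greenlees--Sadofsky Tate vanishing remark that immediately precedes it. The theorem right above gives a $G$-equivariant equivalence $DE \simeq_G \Sigma^{-V}E$ whenever $V$ is a dualizing $G$-representation for $E$. Since equivalences in $\spectra^G$ are preserved by any functor, applying $(-)^{hG} = F(EG_+,-)^G$ to both sides yields
\[
(DE)^{hG} \simeq (\Sigma^{-V}E)^{hG}.
\]

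The remaining task is to identify the left-hand side with $D(E^{hG})$, and this is precisely what the Greenlees--Sadofsky-based remark supplies: because $E$ is $\mK(h)$-local, the norm map $E_{hG}\to E^{hG}$ is an equivalence, so
\[
D(E^{hG}) \simeq D(E_{hG}) \simeq F(EG_+\smsh_G E,L_{\mK(h)}\sphere) \simeq F(EG_+,DE)^G = (DE)^{hG}.
\]
Combining the two equivalences gives the desired $DE^{hG}\simeq (\Sigma^{-V}E)^{hG}$.

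There is no real obstacle here; this is a formal consequence of the theorem and the remark. The only subtlety worth flagging in the write-up is that one must invoke \cref{LinearizationHypothesis} (Clausen) inside the preceding theorem in order to have the $G$-equivariant equivalence $DE \simeq_G \Sigma^{-V}E$ available for \emph{any} finite subgroup $G\leq \stab$, as opposed to only those finite subgroups treated directly in \cite{dualizingspheres}. Beyond that, the proof is a one-line application of functoriality of $(-)^{hG}$ together with the Tate-vanishing identification $D(E^{hG}) \simeq (DE)^{hG}$.
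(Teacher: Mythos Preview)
Your proposal is correct and matches the paper's approach exactly: the paper states this corollary without an explicit proof, immediately following the theorem $DE\simeq_G\Sigma^{-V}E$ and the Greenlees--Sadofsky remark identifying $D(E^{hG})\simeq (DE)^{hG}$, so the intended argument is precisely the one you have written out. Your additional comment about the reliance on \cref{LinearizationHypothesis} is also accurate and consistent with how the preceding theorem is proved in the paper.
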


% !TEX root = master.tex

\subsection{The dualizing $C_{2^n}$-representation}\label{therep}

Throughout this section we will work at the prime $p=2$
and set $h=2^{n-1}m$ for some $n\geq 1$ and odd $m\in\N$. In this case, the central
division algebra $\mathbb{D}_h$ of Hasse invariant $1/h$ contains an embedded copy of 
$\mathbb{Q}_2(\zeta)$, where $\zeta =\zeta_{2^n}$ is a primitive $2^n$-th root 
of unity \cite[Theorem~C.6]{bujard}. We choose such an embedding and leave this choice 
implicit throughout this section. The element $\zeta$ then generates 
a subgroup $C_{2^n}\leq\mathcal{O}^\times_h$ \cite[Proposition~1.1]{bujard}.
We refer the reader to \cite{HewettFiniteSubs} and \cite{bujard} for a detailed 
classification of the finite subgoups of the Morava stabilizer groups with warning that some 
statements in both of these references contain typos: specifically, 
\cite[Theorem~5.3]{HewettFiniteSubs} and \cite[Theorem~1.35]{bujard}. 
For a table that accurately summarizes 
the finite subgroups of $\stab$ see \cite[Table~3.14]{BarthelBeaudry}.

In this section, we identify a dualizing $C_{2^n}$-representation, $V_h$, for Lubin-Tate spectra 
at height $h=2^{n-1}m$ and prime $p=2$, then give an explicit description of $V_h$ 
in terms of familiar representations. The case $n=1$ is exceptional since $C_2\leq\stab$ is 
central, so any dualizing $C_2$-representation is always a trivial representation of dimension $h^2$. 
Thus, for the rest of this section, we assume $n\geq 2$. 

The following proposition from the theory of central simple algebras gives a nice description for 
$\mathbb{D}_h$ which is central to our analysis. 
There are many references, but see \cite[Section~IV.3]{milneCFT} for a particularly nice exposition.

\begin{prop}
\label{normalizerprop}
Let $L\subset\mathbb{D}_h$ be a maximal commutative subalgebra of the central
division algebra over $\mathbb{Q}_p$ of Hasse invariant $1/h$. 
Suppose $L/\mathbb{Q}_p$ is a Galois extension and $\Gal(L/\mathbb{Q}_p)$ the Galois group.
Then there exists \[E=\{e_\phi\}_{\phi\in\Gal(L/\mathbb{Q}_p)}\subset\mathbb{D}_h,\] 
such that $E$ is a basis for $\mathbb{D}_h$ as a left $L$-vector space 
and the multiplication on $\mathbb{D}_h$ is determined by 
\begin{enumerate}[(a)]
\item $e_\phi x = \phi(x) e_\phi,\forall x\in L$, and
\item $e_{\phi_1} e_{\phi_2} = \mu(\phi_1,\phi_2)e_{\phi_1\phi_2}$, for some $2$-cocycle $\mu:\Gal(L/\mathbb{Q}_p)^2\to L^\times$.
\end{enumerate}
\end{prop}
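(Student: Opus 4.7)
The plan is to produce the basis $E = \{e_\phi\}$ by using the Skolem--Noether theorem to realize each Galois automorphism of $L$ as an inner automorphism of $\mathbb{D}_h$, and then to leverage the maximality of $L$ to extract the cocycle $\mu$. First, for each $\phi \in \Gal(L/\mathbb{Q}_p)$, regard $\phi$ as a $\mathbb{Q}_p$-algebra embedding $L \hookrightarrow \mathbb{D}_h$ (by composing the inclusion $L \subset \mathbb{D}_h$ with $\phi$). Since $\mathbb{D}_h$ is a central simple $\mathbb{Q}_p$-algebra, Skolem--Noether provides a unit $e_\phi \in \mathbb{D}_h^\times$ with $e_\phi x e_\phi^{-1} = \phi(x)$ for all $x \in L$, which gives property (a). Fix one such choice of $e_\phi$ for each $\phi$, with $e_{\id} = 1$.

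Next I would verify that $E$ is an $L$-basis. The dimension count is immediate: $[\mathbb{D}_h : \mathbb{Q}_p] = h^2$ and $[L : \mathbb{Q}_p] = h$ (since a maximal commutative subalgebra of a central division algebra of degree $h$ has $\mathbb{Q}_p$-dimension $h$), so $\mathbb{D}_h$ has left $L$-dimension $h$, which equals $|\Gal(L/\mathbb{Q}_p)|$. It therefore suffices to prove linear independence, which I would do by a standard Dedekind--Artin argument: given a hypothetical shortest non-trivial relation $\sum_\phi c_\phi e_\phi = 0$ with some $c_{\phi_0} \neq 0$, multiply on the right by an arbitrary $x \in L$ and use (a) to rewrite the relation as $\sum_\phi c_\phi \phi(x) e_\phi = 0$; combining this with $\phi_0(x) \sum_\phi c_\phi e_\phi = 0$ produces a strictly shorter relation $\sum_\phi (\phi_0(x) - \phi(x)) c_\phi e_\phi = 0$, contradicting minimality unless each $\phi$ with $c_\phi \neq 0$ agrees with $\phi_0$, i.e.\ the relation is trivial.

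Having established that $E$ is an $L$-basis, I would obtain (b) from the maximality of $L$. For any $\phi_1, \phi_2 \in \Gal(L/\mathbb{Q}_p)$, the element $e_{\phi_1} e_{\phi_2} e_{\phi_1 \phi_2}^{-1}$ acts on $L$ by conjugation as $\phi_1 \phi_2 (\phi_1 \phi_2)^{-1} = \id$, hence lies in the centralizer $Z_{\mathbb{D}_h}(L)$. Because $L$ is a maximal commutative subalgebra of the central simple algebra $\mathbb{D}_h$, we have $Z_{\mathbb{D}_h}(L) = L$, so there is a unique $\mu(\phi_1,\phi_2) \in L^\times$ with $e_{\phi_1} e_{\phi_2} = \mu(\phi_1,\phi_2) e_{\phi_1 \phi_2}$. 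Finally, the cocycle identity
\[
\phi_1(\mu(\phi_2, \phi_3)) \, \mu(\phi_1, \phi_2 \phi_3) = \mu(\phi_1, \phi_2) \, \mu(\phi_1 \phi_2, \phi_3)
\]
follows by expanding the associativity $(e_{\phi_1} e_{\phi_2}) e_{\phi_3} = e_{\phi_1}(e_{\phi_2} e_{\phi_3})$ using (a) and (b) and comparing coefficients in the basis $E$.

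The main obstacle, and the place where some care is needed, is the linear independence step, since the non-commutativity of $\mathbb{D}_h$ rules out any naive determinant argument; the Dedekind--Artin trick above is what bridges this gap. The rest of the proof amounts to dimension counting and invoking two standard structural facts (Skolem--Noether and the self-centralizing property of maximal commutative subalgebras), and no detailed calculation with the Hasse invariant $1/h$ is required.
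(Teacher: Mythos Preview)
Your proposal is correct and follows essentially the same approach as the paper's proof: both invoke Skolem--Noether to produce the $e_\phi$ satisfying (a), both deduce (b) from the fact that $e_{\phi_1}e_{\phi_2}e_{\phi_1\phi_2}^{-1}$ centralizes $L$ (the paper leaves the step $Z_{\mathbb{D}_h}(L)=L$ implicit and omits the cocycle verification you spell out), and both prove $L$-linear independence by the same Dedekind--Artin trick of right-multiplying a relation by $x\in L$ and comparing coefficients. The only cosmetic difference is that the paper phrases the independence step as ``express one $e_\phi$ in terms of the others'' rather than your minimal-relation formulation.
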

\begin{proof}
By the Noether-Skolem theorem, for each $\phi\in\Gal(L/\mathbb{Q}_p)$ 
we may choose an element $e_\phi\in\mathbb{D}_h$
for which \[e_\phi x e_\phi^{-1} = \phi(x), \forall x\in L. \] Note that 
we have 
\[e_{\phi_1}e_{\phi_2} x (e_{\phi_1}e_{\phi_2})^{-1} = \phi_2 (\phi_1(x)),\] so 
the product $e_{\phi_1}e_{\phi_2}$ and our chosen element $e_{\phi_1\phi_2}$ must differ 
by an element $\mu(\phi_1,\phi_2)\in L^\times$, that is
\[e_{\phi_1}e_{\phi_2} = \mu(\phi_1,\phi_2)e_{\phi_1\phi_2}.\]
The cocycle condition is a routine check which we omit. 

To see that $E_{\Gal(L)}$ is an $L$-basis it suffices to check that the $e_\phi$ are linearly 
independent. Suppose 
\[e_\phi = \sum x_\chi e_\chi\] is a finite sum. Then for $x\in L$, 
\[e_\phi x = \phi(x) e_\phi = \phi(x)\sum x_\chi e_\chi = \sum \phi(x) x_\chi e_\chi, \]
and 
\[e_\phi x = \sum x_\chi e_\chi x = \sum x_\chi \chi(x) e_\chi.\]
So we must have $\phi(x) x_\chi = x_\chi \chi(x) $ for all $\chi$ in the sum. Now for $x_\chi\neq 0$ 
this implies $\phi(x) = \chi(x)$ for all $x\in L$ so that $\phi = \chi$. 
\end{proof}

\begin{rmk}
We note that we can choose $E$ so that $e_1 = 1$. We assume this choice in what follows. 
\end{rmk}

Consider the extension $\mathbb{Q}_2(\omega)/\mathbb{Q}_2$, where $\omega$ is a primitive root of 
unity of order $2^m-1$. This is a degree $m$ extension with Galois group $C_m$. 
Since $\mathrm{gcd}(m,2) = 1$, the composite $L = \mathbb{Q}_2(\zeta,\omega)$ must be of degree $h = 2^{n-1}m$ 
over $\mathbb{Q}_2$ and so it is a maximal 
commutative extension of $\mathbb{Q}_2$ in $\mathbb{D}_h$. The extension 
$L/\mathbb{Q}_2$ is then Galois with Galois group 
\[\Gal_h := \Gal(L/\mathbb{Q}_2) \cong C_2\times C_{2^{n-2}}\times C_m,\quad n\geq 2.\]

Let $\tau$ and $\psi$ denote the automorphisms given by 
\[\tau:\begin{cases} \zeta\mapsto \zeta^{-1}\\ \omega\mapsto\omega\end{cases}\quad\text{and}\qquad
\psi:\begin{cases} \zeta\mapsto \zeta^5 \\ \omega\mapsto\omega\end{cases}.\]
We have that $\tau$, generates the canonical subgroup $C_2\leq\Gal_h$ and for 
$n\geq 3$, $\psi$ are generates the canonical $C_{2^{n-2}}\leq\Gal_h$. 
Choose a basis $\{\omega_a\}_{a = 1}^m$ for $\mathbb{Q}_2(\omega)/\mathbb{Q}_2$ and 
let $\nu$ be a generator for $C_m\leq\Gal_h$. 
From \cref{normalizerprop} we may choose a nice $L$-basis of $\mathbb{D}_h$
\[E_h = \{e_\phi|\phi\in\Gal_h\}\subset\mathbb{D}_h,\] then consider the lattice 
\[\mathcal{E}_h = \bigoplus_{i=1}^m \Z\{\zeta\cdot \omega_a e_\phi|\phi\in\Gal_h\},\] where 
\[\zeta\cdot \omega_a e_\phi = \{\omega_a e_\phi,\zeta \omega_a e_\phi,\zeta^2 \omega_a e_\phi,\ldots,\zeta^{h-1} \omega_a e_\phi\}.\]
Using (a) from \cref{normalizerprop} we get the following equation describing the conjugation action by 
$\zeta$ on the generators of $\mathcal{E}_h$.
\begin{equation}
\label{C2naction}
\zeta(\zeta^j \omega_a e_{\tau^i\psi^\ell \nu^b})\zeta^{-1} = \zeta^{1 + j + (-1)^{i+1}5^\ell}\omega_a e_{\tau^i\psi^\ell \nu^b}
\end{equation} 
\noindent for 
\[i\in\{0,1\},\quad \ell\in\{0,1,\ldots,2^{n-2}-1\}, a,b\in\{1,...,m\},\quad\text{and}\quad j\in\{0,1,\ldots,2^{n-1}-1\}. \]

\begin{defn}
We set $V_h = \R\otimes\mathcal{E}_h$ and view this as a $C_{2^n}$-representation with 
action given by conjugation by $\zeta$. We call this the 
\textit{height $h = 2^{n-1}$ dualizing $C_{2^n}$-representation} and its restriction
to a subgroup $H\subset C_{2^n}$ the \textit{height $h=2^{n-1}$ dualizing $H$-representation}.
\end{defn}

\noindent The following theorem justifies this terminology.

\begin{thm}
Let $S^\mathfrak{g}$ be the spectrum of \cref{linearsphere} associated 
to the height $h=2^{n-1}$ Morava stabilizer group. There is a
$C_{2^n}$-equivariant equivalence 
\[S^{\mathfrak{g}}\simeq_{C_{2^n}}S^{V_h}.\]
\end{thm}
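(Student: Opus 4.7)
The plan is to apply \cref{RepProp} to the lattice $\mathcal{E}_h$: this will produce a $C_{2^n}$-equivariant map $S^{V_h}\to S^{\mathfrak{g}}$ that is a $2$-complete equivalence, which gives the theorem since $S^{\mathfrak{g}}$ is already $2$-complete by construction. The two hypotheses to verify are (a) stability of $\mathcal{E}_h$ under conjugation by $\zeta$ and (b) the identification $\mathbb{Q}_2\ox\mathcal{E}_h\cong\mathbb{Q}_2\ox_{\Z_2}\cO=\mathbb{D}_h$.

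For (a), I will appeal directly to \eqref{C2naction}: conjugation by $\zeta$ carries the generator $\zeta^j\omega_a e_{\tau^i\psi^\ell\nu^b}$ to $\zeta^k\omega_a e_{\tau^i\psi^\ell\nu^b}$ for $k=1+j+(-1)^{i+1}5^\ell$. Although $k$ may fall outside the index range $\{0,\dots,2^{n-1}-1\}$, the cyclotomic identity $\zeta^{2^{n-1}}=-1$ (valid since $\zeta$ is a primitive $2^n$-th root of unity) together with $\zeta^{2^n}=1$ lets us rewrite $\zeta^k=\pm\zeta^{k'}$ for a unique $k'\in\{0,\dots,2^{n-1}-1\}$. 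As $\mathcal{E}_h$ is a $\Z$-lattice and is therefore closed under negation, the conjugate lies in $\mathcal{E}_h$, proving stability.

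For (b), the key input is a basis identification. Since $\mathbb{Q}_2(\zeta)/\mathbb{Q}_2$ is totally ramified of degree $2^{n-1}$ while $\mathbb{Q}_2(\omega)/\mathbb{Q}_2$ is unramified of odd degree $m$, the two extensions have coprime degrees and are linearly disjoint over $\mathbb{Q}_2$, so
\[L=\mathbb{Q}_2(\zeta,\omega)\cong\mathbb{Q}_2(\zeta)\ox_{\mathbb{Q}_2}\mathbb{Q}_2(\omega).\]
The set $\{\zeta^j\}_{0\leq j<2^{n-1}}$ is the standard $\mathbb{Q}_2$-basis of $\mathbb{Q}_2(\zeta)$, so $\{\zeta^j\omega_a\}$ is a $\mathbb{Q}_2$-basis of $L$. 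Combined with the $L$-basis $\{e_\phi\}_{\phi\in\Gal_h}$ of $\mathbb{D}_h$ from \cref{normalizerprop}, the product set $\{\zeta^j\omega_a e_\phi\}$ is a $\mathbb{Q}_2$-basis of $\mathbb{D}_h$, giving (b). One technical wrinkle is that the Noether--Skolem elements $e_\phi$ need not lie in $\cO$, so $\mathcal{E}_h$ might not sit inside $\cO$ on the nose; this is harmless because $\mathbb{D}_h=\mathbb{Q}_2\ox_{\Z_2}\cO$ lets us scale each $e_\phi$ by a sufficient power of $2$, and the rescaled sublattice $2^N\mathcal{E}_h\subset\cO$ satisfies the same two conditions while determining the same $C_{2^n}$-representation $V_h=\R\ox\mathcal{E}_h$. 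The main obstacle I anticipate is this bookkeeping together with the clean verification of linear disjointness; both are standard once organized properly.
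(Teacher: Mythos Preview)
Your proposal is correct and follows essentially the same approach as the paper: both apply \cref{RepProp}, verify stability of $\mathcal{E}_h$ via \eqref{C2naction}, verify the rational span condition via \cref{normalizerprop}, and handle the issue that $\mathcal{E}_h$ may not sit inside $\cO$ by rescaling by a power of $2$. You spell out a few steps in more detail (the reduction of $\zeta^k$ modulo $\zeta^{2^{n-1}}=-1$, and the linear disjointness of $\mathbb{Q}_2(\zeta)$ and $\mathbb{Q}_2(\omega)$), but these are exactly the points the paper leaves implicit.
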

\begin{proof}
By \cref{RepProp} it suffices to find a lattice in $\mathcal{O}$ satisfying 
the conditions (1) and (2) in the proposition statement. 
The lattice $\mathcal{E}_h$ satisfies (1) thanks to \cref{C2naction} and (2) thanks to 
\cref{normalizerprop}. However, it may not be the case that 
$\mathcal{E}_h\subset\mathcal{O}$. To address this, choose $r$ such that the $2$-adic 
valuation of any basis element in $\mathcal{E}_h$ is $\geq -r$. 
Then $2^r\mathcal{E}_h\subset\mathcal{O}_h$. Evidently, 
$\mathbb{Q}_2\otimes 2^r\mathcal{E}_h = \mathbb{Q}_2\otimes\mathcal{E}_h$ so the lattice 
$2^r\mathcal{E}_h$ will satisfy (2) since $\mathcal{E}_h$ does. 
Since $2$ is central, the lattice $2^r\mathcal{E}_h$ will still satisfy (1) and the 
$C_{2^n}$-representations 
$\R\otimes 2^k\mathcal{E}_h$ and $\R\otimes\mathcal{E}_h$ are isomorphic.
\end{proof}

Next we analyze the representation $V_h$ and give an explicit description of 
it in terms of familiar representations. We will make use of the following number theory exercise. 

\begin{lem}\label{ntlem}
Assume $n\geq 3$.
\begin{enumerate}[(a)]
\item We have \[5^{2^{n-3}} \equiv 1 + 2^{n-1}\mmod 2^n.\] \label{ntfact1}
\item For $0\leq\ell\leq 2^{n-2}-1$ and $0\leq m\leq 2^{n-2}$, 
\[m(1+5^\ell)\equiv 2^{n-1}\mmod 2^n \quad\text{if and only if}\quad m = 2^{n-2}.\] \label{ntfact2}
\item Assume $n\geq 4$, and let $v_2(-)$ denote the $2$-adic valuation. 
For $1\leq \ell \leq 2^{n-2}-1$, and $0\leq m\leq 2^{n-v_2(\ell)-3}$, 
\[m(1-5^\ell) \equiv 2^{n-1}\mmod 2^n\quad \text{if and only if}\quad m = 2^{n-v_2(\ell)-3}.\] \label{ntfact3}
\end{enumerate}
\end{lem}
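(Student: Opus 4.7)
The plan is to prove each part by reducing the question to a computation of the $2$-adic valuation of $1 \pm 5^\ell$, after which the congruences become elementary.

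For part (a), I would argue by induction on $n$, starting at the base case $n=3$ where $5^{2^0} = 5 = 1 + 2^2$. For the inductive step, write $5^{2^{n-3}} = 1 + 2^{n-1} + c \cdot 2^n$ for some $c \in \Z$ and square both sides. The cross terms contribute $2(1)(2^{n-1}) = 2^n$ and $(2^{n-1})^2 = 2^{2n-2}$; since $2n-2 \geq n+1$ whenever $n \geq 3$, the squared term vanishes mod $2^{n+1}$, and the terms involving $c$ are all divisible by $2^{n+1}$ as well. This gives $5^{2^{n-2}} \equiv 1 + 2^n \pmod{2^{n+1}}$, completing the induction.

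For part (b), the key observation is that $5 \equiv 1 \pmod 4$ implies $5^\ell \equiv 1 \pmod 4$ for all $\ell \geq 0$, and hence $v_2(1+5^\ell) = 1$ exactly. Writing $1 + 5^\ell = 2u$ with $u$ odd, the congruence $m(1+5^\ell) \equiv 2^{n-1} \pmod{2^n}$ reduces to $mu \equiv 2^{n-2} \pmod{2^{n-1}}$, equivalently $m \equiv 2^{n-2}u^{-1} \equiv 2^{n-2} \pmod{2^{n-1}}$, since any odd element multiplied by $2^{n-2}$ is $\equiv 2^{n-2} \pmod{2^{n-1}}$. In the allowed range $0 \leq m \leq 2^{n-2}$ only $m = 2^{n-2}$ satisfies this (with $m=0$ giving $0 \neq 2^{n-1}$).

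For part (c), the analog replaces the valuation computation by the identity $v_2(5^\ell - 1) = v_2(\ell) + 2$ for all $\ell \geq 1$. I would derive this either by a direct inductive argument bootstrapping off part (a), or by invoking the Lifting-the-Exponent lemma at $p=2$ (applicable since $5 \equiv 1 \pmod 4$): $v_2(5^\ell - 1^\ell) = v_2(5-1) + v_2(\ell) = 2 + v_2(\ell)$. Setting $k = v_2(\ell)$ and writing $1 - 5^\ell = -2^{k+2} u$ with $u$ odd, the bound $\ell \leq 2^{n-2}-1$ forces $k \leq n-3$, so $k+2 \leq n-1$ and the congruence $m(1-5^\ell) \equiv 2^{n-1} \pmod{2^n}$ reduces to $m \equiv 2^{n-k-3} \pmod{2^{n-k-2}}$ by the same ``odd multiple'' observation as in (b). The only value of $m$ in $[0, 2^{n-k-3}]$ satisfying this is $m = 2^{n-k-3}$, and a final substitution confirms this value works since $-2^{n-1} u \equiv 2^{n-1} \pmod{2^n}$ for $u$ odd.

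The main technical point that requires care is the computation $v_2(5^\ell - 1) = v_2(\ell) + 2$; the rest of the argument is uniform bookkeeping. This valuation identity is standard, so I expect no genuine obstacle, only the need to organize the case analysis cleanly.
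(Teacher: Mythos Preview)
Your proposal is correct. For parts (a) and (c) you and the paper take essentially the same route: the induction-by-squaring for (a), and the lifting-the-exponent computation $v_2(5^\ell-1)=2+v_2(\ell)$ for (c), with the remaining bookkeeping exactly as you describe.

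For part (b) your argument differs from the paper's and is in fact cleaner. The paper proves the forward direction by induction on $\ell$ and then handles the converse by specializing to the single value $\ell=2^{n-3}$ via part (a); as written this only directly treats that one value of $\ell$. Your approach instead observes once and for all that $v_2(1+5^\ell)=1$ for every $\ell\ge 0$ (since $5^\ell\equiv 1\pmod 4$), writes $1+5^\ell=2u$ with $u$ odd, and reduces the congruence to $m\equiv 2^{n-2}\pmod{2^{n-1}}$ uniformly in $\ell$. This gives both directions simultaneously and for all $\ell$ at once, mirroring what you do in (c); it is a minor but genuine improvement in organization.
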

\begin{proof}
Part (a) can be proved by a straightforward use of induction on $n$, but also follows from part (c). 

To prove (b) we fix $n$ and 
use induction on $\ell$. For the inductive step note that 
\[2^{n-2}(1+5^{\ell+1}) = 2^{n-2}(1+5^\ell) + 2^{n-2}(4)(5^\ell)\equiv 2^{n-2}(1+5^\ell)\mmod 2^n.\] 
This shows that the modular equivalence holds for $m = 2^{n-2}$. For the reverse direction, suppose 
it holds for a given $m$. Taking $\ell = 2^{n-3}$ and using part (a) implies
\[m(2 + 2^{n-1})\equiv 2^{n-1}\mmod 2^n.\] Since $2^{n-2}+1$ is invertible mod $2^n$, we must have 
$2m\equiv 2^{n-1}(2^{n-2}+1)^{-1}\mmod 2^n$. For $n\geq 3$, $2^n$ divides $2^{2n-3}$, so 
$2m\equiv 2^{n-1}\mmod 2^n$, completing the proof of (b). 

Lastly, we prove (c), which will follow from showing that $v_2(1-5^\ell) = 2 + v_2(\ell)$. 
This follows from the lifting-the-exponent lemma. 
If $\ell$ is odd, then the lemma says that
\[v_2(1-5^\ell) = v_2(1-5) = 2 = 2 + v_2(\ell).\]
If $\ell$ is even, then 
\[v_2(1-5^\ell) = v_2(1-5) + v_2(1+5) + v_2(\ell) -1 = 2 + v_2(\ell). \qedhere\]

\end{proof}

\begin{thm}
\label{thedualizingrep}
For $h=2^{n-1}m$ with $m$ odd and $n\geq 2$, there is an isomorphism of $C_{2^n}$-representations 
\[\R\otimes\mathcal{E}_h \cong V_h = 2^{n-1}m^2(\rho_{C_{2^n}} - \mathrm{Ind}_{C_2}^{C_{2^n}}\sigma).\]
\end{thm}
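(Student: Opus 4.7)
The plan is to decompose $\mathcal{E}_h$ into $C_{2^n}$-stable summands using the explicit formula \cref{C2naction}, identify each summand with a copy of $\Z[\zeta_{2^n}]$ acted on by a power of $\zeta$, and then decompose the real extension of each summand via an eigenvalue computation.

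First, \cref{C2naction} shows that each $\Z$-span $W_{a,\phi} := \Z\{\zeta^j\omega_a e_\phi : 0 \leq j \leq 2^{n-1}-1\}$ is $C_{2^n}$-stable, and that the action on $W_{a,\phi}$ depends on $\phi = \tau^i\psi^\ell\nu^b$ only through $(i,\ell)$ (since $\nu$ fixes $\zeta$) and not on $a$. This gives a $C_{2^n}$-isomorphism
\[
\mathcal{E}_h \;\cong\; m^2\bigoplus_{i\in\{0,1\}}\bigoplus_{\ell=0}^{2^{n-2}-1} W_{i,\ell},
\]
where $W_{i,\ell}\cong \Z[\zeta_{2^n}]$ with the generator of $C_{2^n}$ acting by multiplication by $\zeta^{c_{i,\ell}}$ for $c_{i,\ell} := 1 - (-1)^i 5^\ell$.

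Next, I would decompose $\R\otimes W_{i,\ell}$ into real irreducibles. Setting $v := v_2(c_{i,\ell})$, the eigenvalues of multiplication by $\zeta^{c_{i,\ell}}$ on $\mathbb{C}[\zeta_{2^n}]$ are the primitive $2^{n-v}$-th roots of unity, each appearing with multiplicity $2^v$. Pairing complex conjugates and using the convention $\lambda_{n-1} = 2\sigma$, one obtains $\R\otimes W_{i,\ell} \cong 2^{n-2}\lambda_v$; the degenerate case $c_{i,\ell} = 0$ occurs only for $(i,\ell)=(0,0)$ and yields $2^{n-1}$ copies of the trivial representation. \cref{ntlem}(c) gives $v_2(1-5^\ell) = 2 + v_2(\ell)$ for $\ell\geq 1$ (with the cases $n=2, 3$ handled by direct computation), while $v_2(1+5^\ell) = 1$ for all $\ell$ follows from $5\equiv 1\pmod 4$.

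Counting the $\ell\in\{1,\ldots,2^{n-2}-1\}$ with $v_2(\ell) = k$ as $2^{n-k-3}$ and collecting contributions by $v$-value gives, after substituting $\lambda_{n-1}=2\sigma$,
\[
\R\otimes\bigoplus_{i,\ell}W_{i,\ell} \;\cong\; 2^{n-1} + 2^{n-1}\sigma + \sum_{j=1}^{n-2}2^{2n-3-j}\lambda_j.
\]
Finally, this matches $2^{n-1}(\rho_{C_{2^n}} - \mathrm{Ind}_{C_2}^{C_{2^n}}\sigma)$ via the identity $\rho_{C_{2^n}} = \mathrm{Ind}_{C_2}^{C_{2^n}}\rho_{C_2}$, which yields $\rho_{C_{2^n}} - \mathrm{Ind}_{C_2}^{C_{2^n}}\sigma = \mathrm{Ind}_{C_2}^{C_{2^n}}1$; this is the inflation of $\rho_{C_{2^{n-1}}}$ along the quotient $C_{2^n}\to C_{2^{n-1}}$ and decomposes as $1+\sigma+\sum_{j=1}^{n-2}2^{n-2-j}\lambda_j$. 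The main obstacle in executing this plan is the careful bookkeeping of multiplicities across the many $(i,\ell)$ cases and the verification of the small-$n$ corner cases where \cref{ntlem}(c) does not directly apply.
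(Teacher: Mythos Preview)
Your proposal is correct and follows essentially the same strategy as the paper's proof: both reduce to the $m^2$-fold sum over $(i,\ell)$, determine how each $\Z\{\zeta^j e_{\tau^i\psi^\ell}\}$ decomposes by computing the $2$-adic valuation of $1-(-1)^i5^\ell$ via \cref{ntlem}, and then match multiplicities against the target representation. The only difference is cosmetic: you read off the irreducible type of each summand from the eigenvalues of multiplication by $\zeta^{c_{i,\ell}}$ on $\mathbb{C}[\zeta_{2^n}]$, whereas the paper identifies the orbit size by finding the least $m$ with $mc_{i,\ell}\equiv 2^{n-1}\pmod{2^n}$ and matches with the summands $\R[x]/(x^{2^r}+1)$ of $\rho_{C_{2^n}}$---these are equivalent bookkeeping devices for the same computation.
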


\begin{proof}
First note that $\zeta$ commutes with $e_{\nu^b}$ and 
with $\omega_a$ for any $a,b\in\{1,...,m\}$. Thus, it suffices to show that there is an isomorphism
\[\R\otimes\Z\{\zeta\cdot e_\phi| \phi\in C_2\times C_{2^{n-1}}\}\cong 
2^{n-1}(\rho_{C_{2^n}}-\mathrm{Ind}_{C_2}^{C_{2^n}}\sigma)\] of representations.
We treat the $n=2$ case separately, where we have 
\[V_h = \R\{\zeta\cdot 1,\zeta\cdot e_\tau\}.\]
Since $\zeta$ commutes with itself, $\R\{\zeta\cdot 1\}$ gives $h$ copies of the trivial representation.
In this case $\zeta^2 = -1$, so that \[\zeta e_\tau \zeta^{-1} = \zeta^2 e_\tau = -e_\tau,\] implying that
$\R\{\zeta\cdot e_\tau\}$ is isomorphic to $h$ copies of the sign representation. 
One can identify $\mathrm{Ind}_{C_2}^{C_4}\sigma_2$ with the $2$-dimensional representation generated by 
$\frac{\pi}{2}$-rotation, so this completes the proof of this case. 

Now on to the $n\geq 3$ case.
The real regular representation $\rho_{2^n}$ of $C_{2^n}$ can be written as
\[\frac{\R[x]}{x^{2^n}-1}\cong\frac{\R[x]}{x^2-1}\oplus\frac{\R[x]}{x^2+1}\oplus\cdots\oplus\frac{\R[x]}{x^{2^{n-3}}+1}\oplus
\frac{\R[x]}{x^{2^{n-2}}+1}\oplus\frac{\R[x]}{x^{2^{n-1}}+1},\] where the generator of $C_{2^n}$ acts by 
multiplication by $x$. 
One can identify the last summand with $\text{Ind}_{C_2}^{C_{2^n}}\sigma_2$, so
our task is to identify $h$ copies of the remaining summands individually within 
\[V_h = \R\{\zeta\cdot 1, \zeta\cdot e_\psi,,...,\zeta\cdot e_{\psi^{2^{n-2}-1}},
\zeta\cdot e_\tau,\zeta\cdot e_{\tau\psi},...,\zeta\cdot e_{\tau\psi^{2^{n-2}-1}}\}.\]

First, by part (\ref{ntfact1}) of \cref{ntlem} and the fact that $\zeta^{2^{n-1}} = -1$, we have 
\[\zeta e_{\psi^{2^{n-3}}}\zeta^{-1} = \zeta^{1-5^{2^{n-3}}}e_{\psi^{2^{n-3}}}
=\zeta^{-2^{n-1}}e_{\psi^{2^{n-3}}}=-e_{\psi^{2^{n-3}}}.\]
So the subspace $\R\{\zeta\cdot 1,\zeta\cdot e_{\psi^{2^{n-3}}}\}\subset V_h$ 
is isomorphic to the representation $h+h\sigma_{2^n}$ corresponding to $h$ copies of 
the first direct summand, $\R[x]/(x^2-1)$. 

Now, using part (\ref{ntfact2}) of \cref{ntlem} and the formula 
\[\zeta^m e_{\tau\psi^\ell}\zeta^{-m} = \zeta^{m(1+5^\ell)}e_{\tau\psi^\ell},\] we deduce 
that subspace spanned by the orbit of $\zeta^j e_{\tau\psi^\ell}$ for some $j,\ell$ under the action above is isomorphic to 
$\R[x]/(x^{2^{n-2}}+1)$. This shows that the subspace $\R\{\zeta\cdot e_{\tau\psi^\ell}|0\leq\ell\leq 2^{n-2}-1\}\subset V_h$ 
is isomorphic to $2^{n-1}\cdot 2^{n-2}/2^{n-2} = h$ copies of the summand $\R[x]/(x^{2^{n-2}}+1)$. 
Note that this completes the proof for $n = 3$, so for the rest of the proof we assume $n\geq 4$.

It remains to find $h$ copies of $\R[x]/(x^{2^r}+1)$ for each $1\leq r\leq n-3$. 
Using part (\ref{ntfact3}) of \cref{ntlem} and the formula 
\[\zeta^m e_{\psi^\ell}\zeta^{-m} = \zeta^{m(1-5^\ell)}e_{\psi^\ell},\] 
we deduce that for each $1\leq\ell\leq 2^{n-2}-1$, with $\ell\neq 2^{n-3}$, the subspace
\[\R\{\zeta\cdot e_{\psi^\ell}\}\] is isomorphic to $2^{v_2(\ell)+2}$ copies of 
$\R[x]/(x^{2^{n-v_2(\ell)-3}}+1)$. It remains to do a dimension count.
For a given $0\leq v < n-3$, let $L_v$ denote the set of $\ell$ such that $1\leq\ell\leq 2^{n-2}-1$ and 
$v_2(\ell) = v$. We must show that 
\[\sum_{\ell\in L_v} 2^{v_2(\ell)+2} = h = 2^{n-1}. \]
Since $v_2(\ell)$ is constant in the sum above, the sum is equivalent to $2^{v+2}|L_v|$. 
Now, $|L_v|$ counts the number of odd $d$ such that $2^v d < 2^{n-2}$, which is 
$2^{n-v-3}$, so we are done. Evidently, for each $0\leq r < n-3$ there is some 
$\ell\in\{1,...,2^{n-2}-1\}$ for which $v_2(\ell) = r$, so this takes care of the remaining summands, 
thus completing the proof.
\end{proof}

As it will play an important role in the later sections, we record the dualizing 
$C_4$-representation at all heights a power of $2$. 

\begin{cor}
\label{thec4rep}
The height $h=2^{n-1}m$ dualizing $C_4$-representation is given by 
\[V_h(C_4) := \mathrm{Res}^{C_{2^n}}_{C_4}V_h\cong \frac{h^2}{2}(1+\sigma_4).\]
\begin{proof}
Taking the restriction of the $C_{2^n}$-representation found in 
\cref{thedualizingrep}, we get
\begin{equation*}
\begin{split}
    2^{n-1}m^2(2^{n-2}\rho_4 - \mathrm{Res}^{C_{2^n}}_{C_4}\text{Ind}^{C_{2^n}}_{C_2}\sigma_2) &\cong 
    2^{n-1}m^2(2^{n-2}\rho_4 - 2^{k-2}\text{Ind}_{C_2}^{C_4}\sigma_2)\\
    &\cong 2^{n-1}m^2(2^{n-2} + 2^{n-2}\sigma_4). \qedhere
\end{split}
\end{equation*}
\end{proof}
\end{cor}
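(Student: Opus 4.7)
The plan is to apply the restriction functor $\mathrm{Res}^{C_{2^n}}_{C_4}$ directly to the expression for $V_h$ supplied by \cref{thedualizingrep}, namely $V_h \cong 2^{n-1}m^2\bigl(\rho_{C_{2^n}} - \mathrm{Ind}^{C_{2^n}}_{C_2}\sigma\bigr)$. Since restriction is additive, it suffices to compute the restriction of each summand separately and then take the formal difference in the real representation ring.

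First, the restriction of the regular representation along the index-$2^{n-2}$ inclusion $C_4\hookrightarrow C_{2^n}$ is standard: $\mathrm{Res}^{C_{2^n}}_{C_4}\rho_{C_{2^n}} \cong 2^{n-2}\rho_4$, and over $C_4$ one has $\rho_4 \cong 1 + \sigma_4 + \lambda$, where $\lambda$ denotes the $2$-dimensional rotation representation.

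For the second summand I would invoke the Mackey double coset formula. Since $C_{2^n}$ is abelian and $C_2\subseteq C_4\subseteq C_{2^n}$, every double coset $C_4\, g\, C_2$ reduces to a single coset of $C_4$, and $C_4\cap gC_2 g^{-1} = C_2$; Mackey therefore yields
\[
\mathrm{Res}^{C_{2^n}}_{C_4}\mathrm{Ind}^{C_{2^n}}_{C_2}\sigma \;\cong\; 2^{n-2}\,\mathrm{Ind}^{C_4}_{C_2}\sigma.
\]
A short character calculation (or the observation that the nontrivial element of $C_2\leq C_4$ acts as $-I$ on $\lambda$, so that $\mathrm{Res}^{C_4}_{C_2}\lambda \cong 2\sigma$) identifies $\mathrm{Ind}^{C_4}_{C_2}\sigma\cong\lambda$, since $\mathrm{Ind}^{C_4}_{C_2}\sigma$ is $2$-dimensional and has no trivial summand by Frobenius reciprocity.

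Putting the two pieces together gives
\[
V_h(C_4)\;\cong\; 2^{n-1}m^2\cdot 2^{n-2}(\rho_4 - \lambda) \;\cong\; 2^{2n-3}m^2(1+\sigma_4) \;=\; \frac{h^2}{2}(1+\sigma_4),
\]
using $h^2 = 2^{2n-2}m^2$. There is essentially no obstacle here: once \cref{thedualizingrep} is in hand, this corollary is a short bookkeeping exercise in the representation theory of cyclic $2$-groups, the only mildly delicate point being the Mackey reduction for the induced summand.
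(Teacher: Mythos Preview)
Your proof is correct and follows essentially the same approach as the paper's: restrict the formula from \cref{thedualizingrep}, use that $\mathrm{Res}^{C_{2^n}}_{C_4}\rho_{C_{2^n}}\cong 2^{n-2}\rho_4$ and $\mathrm{Res}^{C_{2^n}}_{C_4}\mathrm{Ind}^{C_{2^n}}_{C_2}\sigma\cong 2^{n-2}\mathrm{Ind}^{C_4}_{C_2}\sigma\cong 2^{n-2}\lambda$, and simplify $\rho_4-\lambda=1+\sigma_4$. Your write-up is in fact a bit more explicit than the paper's, spelling out the Mackey double coset formula and the Frobenius reciprocity argument for $\mathrm{Ind}^{C_4}_{C_2}\sigma\cong\lambda$, but the underlying computation is identical.
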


\begin{rmk}
From this corollary, we can deduce that for $h$ even,
\[DE_h\simeq_{C_4}\Sigma^{-\frac{h^2}{2}(1+\sigma_4)}E_h.\] We note that this identification of 
the dualizing representation for $C_4$ does not necessarily depend on 
\cref{LinearizationHypothesis}. It also follows from Theorem 8.11 of \cite{dualizingspheres}.
\end{rmk}

\section{The Real bordism theories}
At this point and for the remainder of this paper we work at the 
prime $p=2$ and let $C_{2^n}$ be the cyclic group of order $2^n$ generated 
by  an element $\gamma$. The goal for this section is to introduce some 
computationally convenient Lubin-Tate spectra. With the exception of \cref{funnyprop},
we do not claim originality of any statements in this section. They can all
be found either in work of HHR or \cite{lubintatemodels}. We emphasize that, though it is not explicitly 
stated therein, \cref{factorization} is a direct consequence of the results in \cite{lubintatemodels}. 

Let $\mur$ be the \emph{Real bordism spectrum}. 
This spectrum was first constructed by Landweber and later extensively 
studied by Araki and Hu-Kriz. In particular, 
Araki showed that one could lift the classical Quillen idempotent to this setting 
and produce a $\emph{Real Brown-Peterson spectrum}$, $\bpr$. 
We consider $\mur$ as a 
genuine commutative ring $C_2$-spectrum as in \cite[Chapter~12]{purplebook}. In this 
setting $\bpr$ is a genuine $\mur$-module $C_2$-spectrum. 
We are interested in the norms to $C_{2^n}$
\[\MUCn := N_{C_2}^{C_{2^n}}\mur\quad\text{and}\quad\BPCn := N_{C_2}^{C_{2^n}}\bpr\]
and their quotients. 
We refer the reader to \cite[Section~9.7B]{purplebook} for a definition 
and discussion of the norm and to \cite[Section~2]{slicequotients} for a discussion 
of quotients of norms of $\mur$. 
Certain quotients have been extensively studied in
\cite{hhr17, hishiwax, slicequotients} and these computations will be the key input for 
determining the self-duality of the higher real $K$-theories $E_h^{hC_{2^n}}$.

\subsection{Real bordism Lubin-Tate spectra}
Beaudry-Hill-Shi-Zeng construct Lubin-Tate spectra equipped with equivariant maps 
from $\BPCn$ \cite{lubintatemodels}. We begin this section by reviewing their construction. 

The left unit map  
\[i^\ast_e\bpr\simeq\textrm{BP}\smsh S^0\to
\textrm{BP}^{\smsh 2^{n-1}}\simeq i^\ast_e\BPCn\] of 
underlying spectra gives rise to a $2$-typical formal group law 
$\mathcal{F}$ over $\pi_\ast^e\BPCn$. 
The $C_{2^n}$-action on homotopy
\[f_\gamma:\pi_\ast^e\BPCn\to\pi_\ast^e\BPCn\] comes with 
a canonical strict isomorphism 
\[\psi_\gamma:\mathcal{F}\to f_\gamma^\ast\mathcal{F} =:\mathcal{F}^\gamma.\]
Since these formal group laws are $2$-typical, we have 
\[\psi_\gamma(x) = x + \sideset{}{^{\mathcal{F}^\gamma}}\sum\limits_{i\geq 1} t_i x^{2^i}\] 
for some $t_i$ with $|t_i|=2(2^i-1)$. 
From the arguments of \cite[Section~5.4]{hhr16} we can deduce that  
\[\pi_\ast^e\BPCn = \Z_{(2)}[C_{2^n}\cdot t_1, C_{2^n}\cdot t_2,\ldots],\]
where \[C_{2^n}\cdot x = \{x,\gamma x,\ldots,\gamma^{2^{n-1}-1}x\}\] and the $C_{2^n}$-action is given by 
\begin{equation*}\tag{3.1}
    \label{action}
    \gamma(\gamma^i t_k) = \begin{cases} \gamma^{i+1} t_k & \textrm{for } i < 2^{n-1}-1 \\ 
-t_k & \textrm{for } i = 2^{n-1}-1. \end{cases}
\end{equation*}
In fact, using the arguments of \cite[Section~5]{hhr16}, one can prove something much stronger. 

\begin{prop}
The $C_2$-spectra $i^\ast_{C_2}\BPCn$ are \textit{strongly even} in the sense of 
\cite[Definition~3.1]{hillmeiertmf}. In particular, the restriction maps 
\[\res^{C_2}_e:\pi_{\ast\rho_2}^{C_2}\BPCn\to \pi_{2\ast}^e\BPCn\] are isomorphisms. 
\end{prop}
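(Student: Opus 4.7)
The strategy is to reduce to the classical strong evenness of $\bpr$ itself. Since $C_2$ is central in $C_{2^n}$---indeed it is the unique subgroup of order two---the Mackey double-coset formula for the restriction of a norm gives a $C_2$-equivariant equivalence
\[
i^\ast_{C_2}\BPCn \;=\; i^\ast_{C_2}\,N_{C_2}^{C_{2^n}}\bpr \;\simeq\; \bpr^{\smsh 2^{n-1}},
\]
where $C_2$ acts factorwise: it does not permute the $2^{n-1}$ factors, as $C_2$ acts trivially on $C_{2^n}/C_2$ by translation, and the conjugation on each factor is trivial by centrality. This identification reduces the problem to showing that an iterated $C_2$-equivariant smash power of $\bpr$ is strongly even.

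From the foundational work on Real bordism of Araki and Hu-Kriz, $\bpr$ is itself strongly even: there are classes $\tgen_i \in \pi^{C_2}_{(2^i-1)\rho_2}\bpr$ whose restrictions are the classical polynomial generators $t_i \in \pi^e_{2(2^i-1)}\bpr = BP_{2(2^i-1)}$, and $\res^{C_2}_e$ is an isomorphism in $\ast\rho_2$-degrees. For each $0\leq j\leq 2^{n-1}-1$, smashing $\tgen_i$ from the $j$-th factor with units from the remaining factors yields a class $\tgen_i^{(j)} \in \pi^{C_2}_{(2^i-1)\rho_2}\,i^\ast_{C_2}\BPCn$ whose restriction realizes the $\gamma^j t_i$ of \eqref{action}. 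These classes exhibit $\res^{C_2}_e$ as surjecting onto the polynomial algebra $\pi^e_{2\ast}\BPCn = \Z_{(2)}[C_{2^n}\cdot t_1, C_{2^n}\cdot t_2,\ldots]$ identified in the preceding discussion, and in particular the underlying homotopy is concentrated in even degrees.

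The main obstacle is to upgrade surjectivity to a bijection, i.e.\ to prove that strong evenness is closed under smash products of $C_2$-spectra. The approach I will take is a Kunneth argument via the slice filtration: a strongly even $C_2$-spectrum has slice tower concentrated in integer multiples of $\rho_2$, and since the slice filtration is multiplicative, the slices of $\bpr^{\smsh 2^{n-1}}$ also remain concentrated in $\ast\rho_2$-degrees. The $C_2$-slice spectral sequence then collapses into a single row in the relevant range, yielding the tensor-product decomposition of $\pi^{C_2}_{\ast\rho_2}\bpr^{\smsh 2^{n-1}}$ over $\pi^{C_2}_{\ast\rho_2}\bpr$ and hence the required isomorphism. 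The delicate technical step is tracking multiplicativity of the slice filtration through the iterated smash and ruling out hidden extensions between slices; for this we lean on the HHR slice theorem for $\bpr$, which pins down the slices as $\rho_2$-suspensions of $H\underline{\Z}$ and makes the evenness of the $E_2$-page transparent.
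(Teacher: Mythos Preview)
Your proposal is correct. The paper itself gives no proof beyond the sentence ``using the arguments of \cite[Section~5]{hhr16}'', so your write-up is a legitimate fleshing-out of that citation: the double-coset formula reduction $i^\ast_{C_2}N_{C_2}^{C_{2^n}}\bpr\simeq \bpr^{\smsh 2^{n-1}}$ (with factorwise $C_2$-action, since $C_2$ is central) is exactly HHR's description of restricted norms, and the remaining step---that strong evenness of $C_2$-spectra is closed under smash products---is what the slice multiplicativity argument in HHR Section~5 buys you.

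Two simplifications worth noting. First, the very reference in the statement, Hill--Meier, already records the closure-under-smash step: their Lemma~3.4 states that the smash product of two strongly even $C_2$-spectra is strongly even, so you can cite that directly and avoid the delicate K\"unneth bookkeeping you flag at the end. Second, and perhaps more in the spirit of the paper's citation, the HHR Slice Theorem is proved for $\MUCn$ itself (not just $\mur$), and restricting its slice decomposition to $C_2$ immediately exhibits the slices of $i^\ast_{C_2}\BPCn$ as wedges of $\Sigma^{k\rho_2}H\underline{\Z}$'s; strong evenness then drops out without ever invoking the double-coset formula. Either route is fine, but the second is closer to what the paper seems to intend by ``the arguments of \cite[Section~5]{hhr16}''.
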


\noindent We thus have that 
\[\pi_{\ast\rho_2}^{C_2}\BPCn = \Z_{(2)}[C_{2^n}\cdot\tgen_1,C_{2^n}\cdot\tgen_2,\ldots]\] for 
some $\tgen_i$ with $|\tgen_i| = (2^i-1)\rho_2$. 

Now for $m\geq 1$ and $k/\F_2$ an algebraic extension, we consider the graded ring \[R_n(k,m) = 
W(k)[C_{2^n}\cdot t_1,\ldots,C_{2^n}\cdot t_{m-1},
C_{2^n}\cdot u][C_{2^n}\cdot u^{-1}]_{\mathfrak{m}}^\smsh\]
where \[\mathfrak{m} = (C_{2^n}\cdot t_1,\ldots,C_{2^n}\cdot t_{m-1}, C_{2^n}(u-\gamma u)),\]
and $|t_i| = 2(2^i-1)$ for $1\leq i\leq m-1$ and $|u| = 2$. 
There is a natural action of $C_{2^n}$ on $R_n(k,m)$ given just as in $\pi^e_\ast\BPCn$ on the 
generators and extended $W(k)$-linearly. 
There results a $C_{2^n}$-equivariant map 
\begin{equation}\tag{3.2}
\label{ringmap}
\pi_\ast^e \BPCn\to R_n(k,m)
\end{equation}
given by 
\[t_i\mapsto\begin{cases}
t_i & 1\leq i\leq m-1 \\ u^{2^m-1} & i=m \\ 0 & i > m
\end{cases}.\] 

\noindent This gives rise to formal group laws $F_h$ over $R_n(k,m)$ and $\Gamma_h$ over 
$K_n(k,m) = R_n(k,m)/\mathfrak{m}$, where $h = 2^{n-1}m$ is the height of $\Gamma_h$. 
Proposition 2.12 of \cite{lubintatemodels} shows that they 
both come equipped with an action of $C_{2^n}$ extending the $C_2$-conjugation action.
In particular the $C_{2^n}$-action on $\Gamma_h$ is generated by 
\begin{equation}\tag{3.3}
\label{fglaction} 
x +_{\Gamma_h} u^{2^m-1}x^{2^m}.
\end{equation}

\begin{convention*}
When discussing the height of the formal group law $\Gamma_h$ above, we view 
the height variable $h$ as implicitly depending on some $n$ and $m$
via $h = 2^{n-1} m$. When we wish to emphasize the particular choice of $n$ and $m$, we will 
write $h(n,m)$ instead of $h$. 
\end{convention*}

\begin{thm}[{\cite[Theorem~4.6]{lubintatemodels}}]
The formal group law $F_h$ is a universal deformation of the height $h=h(n,m)$ 
formal group law $\Gamma_h$. Furthermore $C_{2^n}$ can be identified as a subgroup of 
the group of automorphisms of the pair $(k,\Gamma)$:
\[C_{2^n}\leq \mathbb{G}(k,\Gamma_h).\] 
\end{thm}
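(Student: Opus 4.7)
The plan is to verify both claims separately: the universal deformation statement for $F_h$, and the identification of $C_{2^n}$ inside $\mathbb{G}(k, \Gamma_h)$.

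For the universal deformation part, the strategy is to identify the degree zero subring of $R_n(k,m)$ with the Lubin-Tate universal deformation ring $W(k)[[x_1, \ldots, x_{h-1}]]$. Since $u$ and its $C_{2^n}$-orbit have already been inverted in the construction, a natural degree zero generating set consists of the ratios $\bar{t}_i^{(j)} := \gamma^j t_i / u^{2^i-1}$ for $1 \leq i \leq m-1$ and $0 \leq j \leq 2^{n-1}-1$, together with $\bar{u}_j := \gamma^j u / u$ for $1 \leq j \leq 2^{n-1}-1$. Counting gives $(m-1)\cdot 2^{n-1} + (2^{n-1} - 1) = h - 1$ generators. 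I would show that after $\mathfrak{m}$-completion, these together with $p=2$ form a regular system of parameters in the complete Noetherian local ring $R_n(k,m)_0$ with residue field $k$. Since $F_h$ reduces to $\Gamma_h$ modulo $\mathfrak{m}$, Lubin-Tate's theorem yields a classifying map
\[W(k)[[x_1, \ldots, x_{h-1}]] \to R_n(k,m)_0,\]
and the parameter count combined with the standard residue-field-plus-dimension argument identifies it as an isomorphism.

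For the inclusion $C_{2^n} \leq \mathbb{G}(k, \Gamma_h)$, I would use that the $C_{2^n}$-action on $\pi_\ast^e \BPCn$ descends along the map (3.2) to an action on $R_n(k,m)$ preserving $\mathfrak{m}$, and hence to $K_n(k,m) = R_n(k,m)/\mathfrak{m}$ and to the reduction $\Gamma_h$. By formula (3.3), the generator $\gamma \in C_{2^n}$ acts on $\Gamma_h$ via the strict isomorphism
\[\psi_\gamma(x) = x +_{\Gamma_h} u^{2^m-1} x^{2^m},\]
which, upon specializing the period $u$ to a unit in $k$, gives an automorphism of the pair $(k, \Gamma_h)$. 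Assembling the $\psi_{\gamma^j}$ yields the homomorphism $C_{2^n} \to \mathbb{G}(k, \Gamma_h)$. Injectivity follows from matching this with the classification of finite subgroups of the Morava stabilizer group given by Hewett and Bujard cited earlier in the paper, since the formula exhibits $\psi_\gamma$ as a nontrivial $2^n$-th root of unity in $\mathrm{End}_k(\Gamma_h)$.

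The main obstacle is the isomorphism check in the first part: verifying that the $h-1$ listed generators form a genuine regular system of parameters, rather than having unexpected relations, requires careful bookkeeping of the $C_{2^n}$-orbit structure on the $t_i$ and the interaction between the grading and the $\mathfrak{m}$-completion. Once this ring-theoretic identification is secured, both the universal deformation statement and the embedding of $C_{2^n}$ into $\mathbb{G}(k, \Gamma_h)$ follow cleanly from standard Lubin-Tate theory and the explicit formula (3.3).
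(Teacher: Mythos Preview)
The paper does not prove this statement; it is quoted from \cite[Theorem~4.6]{lubintatemodels} and used as a black box. There is therefore no proof in the paper to compare your proposal against.

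That said, your outline is a reasonable sketch of how such an argument goes and is broadly in the spirit of the cited reference. A few remarks. First, your proposed generators $\bar{u}_j = \gamma^j u/u$ are units congruent to $1$ modulo $\mathfrak{m}$, not elements of the maximal ideal; the parameters you actually want are $\bar{u}_j - 1$. Second, and more substantively, knowing that $R_n(k,m)_0$ is abstractly isomorphic to a power series ring on $h-1$ variables over $W(k)$ does not by itself force the Lubin--Tate classifying map to be an isomorphism: you still need surjectivity, which in practice requires analyzing the $2$-series of $F_h$ and checking that the standard Lubin--Tate coordinates land on a regular system of parameters in $R_n(k,m)_0$. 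This is the genuine content, and it is essentially the obstacle you already flag. Finally, for the injectivity of $C_{2^n}\to\mathbb{G}(k,\Gamma_h)$ you do not need to invoke the Hewett--Bujard classification: by (3.1) the element $\gamma^{2^{n-1}}$ acts as $-1$ on all generators and hence as $[-1]_{\Gamma_h}$ on the formal group law, which is nontrivial because $\Gamma_h$ has positive height. This directly gives that $\psi_\gamma$ has exact order $2^n$.
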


\begin{rmk}
Throughout \cite{lubintatemodels} the authors work over a finite perfect field 
of characteristic $2$. However, the finiteness condition is unnecessary for this result.
\end{rmk}
 
The Goerss-Hopkins-Miller theorem then gives us a Lubin-Tate
commutative ring spectrum with an action of $C_{2^n}$ given explicitly on homotopy 
groups by \cref{action}.

\begin{defn}\label{rblt}
The height $h = h(n,m)$ \textit{Real bordism Lubin-Tate spectrum}, denoted $E^{\R}_h(k)$, is the 
Lubin-Tate spectrum associated to $(k,\Gamma_h)$. When $k = \overline{\F}_2$, we simply 
write $E^{\R}_h$. 
\end{defn}

\begin{rmk}
We note that the results of \cref{dualityresults} will not apply directly to $E^{\R}_h$ (See \cref{nonequivariatnappendix}).
We will simply use $E^{\R}_h$ as a means of transfering information from $\BPCn$ to 
suitable Lubin-Tate spectra (See \cref{uvsLT}).
Nevertheless, it is natural to ask: for which 
$h=h(n,m)$ does $\Gamma_h$ have all of its automorphisms over a finite field $k$? 
We end this section with an answer to this question. 
\end{rmk}

\begin{prop}\label{funnyprop}
Let $h=h(n,m)$ and $\Gamma_h$ be the formal group law over $K(k,m) = R_n(k,m)/\mathfrak{m}$ 
constructed above with an explicit 
$C_{2^n}$-action. Then if $n = 1$, the pair $(\F_{2^h},\Gamma_h)$ has all automorphisms, and 
if $n=2$, the pair $(\F_{2^{2h}},\Gamma_h)$ has all automorphisms. Furthermore, for all $n>2$, 
there is no finite $r$ such that the pair $(\F_{2^r},\Gamma_h)$ has all automorphisms.  
\end{prop}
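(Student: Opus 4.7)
The plan is to apply \cref{frobprop}, which reduces the claim to determining when the Frobenius $\xi := \xi_{\Gamma_h}$ satisfies $\xi^r \in 2^{r/h}\Z_2^\times$, i.e., when $\xi^r$ becomes a central element of valuation $r/h$ in $\mathcal{O}$. Everything hinges on computing $\xi^m$ as an explicit element of $\mathcal{O}$ using the formula \eqref{fglaction} for the $C_{2^n}$-action.

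First I would read \eqref{fglaction} as an identity in the endomorphism ring: the generator $\gamma$ of $C_{2^n}$ acts as $1 +_{\Gamma_h} \phi^m$, where $\phi(x) = u x^2$ is the graded Frobenius (the grading conventions $|x| = -2$, $|u| = 2$ force the factor $u^{2^m-1}$ in $\phi^m(x) = u^{2^m-1}x^{2^m}$). Since endomorphism-ring addition coincides with formal-group-law addition, this gives the ring-theoretic identity $\gamma = 1 + \xi^m$ in $\mathcal{O}$. Identifying $\gamma$ with the primitive $2^n$-th root of unity $\zeta = \zeta_{2^n} \in \mathcal{O}^\times$ coming from the embedding fixed in \cref{therep}, we arrive at the central identity
\[ \xi^m = \zeta - 1 \in \Z_2(\zeta) \subset \mathcal{O}; \]
for $n \geq 2$ this is a uniformizer of the totally ramified extension $\Z_2(\zeta)/\Z_2$ of degree $2^{n-1}$.

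With this identity in hand the three cases reduce to short computations in cyclotomic $2$-adic fields. For $n = 1$, $\zeta = -1$ and $m = h$, so $\xi^h = -2 = 2 \cdot (-1) \in 2\Z_2^\times$, and \cref{frobprop} gives the claim. For $n = 2$, $\zeta = i$ and $\xi^m = i - 1$; squaring twice yields $\xi^{4m} = \xi^{2h} = -4 \in 4\Z_2^\times$. For $n \geq 3$, the valuation constraint $v(\xi^r) = r/h \in \Z$ forces $r = hk$ and $\xi^r = (\zeta-1)^{2^{n-1}k} \in \Z_2(\zeta)$, which lies in the center $\Z_2$ iff it is fixed by $\Gal(\bQ_2(\zeta)/\bQ_2) \cong (\Z/2^n)^\times$. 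Applying the automorphism $\sigma : \zeta \mapsto \zeta^3$, the condition becomes $(1 + \zeta + \zeta^2)^{2^{n-1}k} = 1$, i.e., $1 + \zeta + \zeta^2$ must be a root of unity. However, the complex absolute value $|1 + \zeta_{2^n} + \zeta_{2^n}^2|_\C = |1 + 2\cos(\pi/2^{n-1})| \geq 1 + \sqrt{2} > 1$ for $n \geq 3$ rules this out.

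The main obstacle will be the first step: carefully translating the graded formula \eqref{fglaction} for $\gamma$ into its ungraded incarnation inside the abstract maximal order $\mathcal{O}$, and matching the resulting element with the cyclotomic generator $\zeta_{2^n}$. Once $\xi^m = \zeta - 1$ is secured, the remainder is standard cyclotomic local-field arithmetic.
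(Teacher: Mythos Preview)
Your proposal is correct and follows essentially the same approach as the paper: both identify $\xi^m = \zeta_{2^n}-1$ via the ungraded reformulation of \eqref{fglaction}, dispatch $n=1,2$ by direct computation, and for $n\geq 3$ reduce via \cref{frobprop} and the Galois automorphism $\zeta\mapsto\zeta^3$ to showing that $1+\zeta+\zeta^2$ is not a root of unity. The only difference is the endgame: the paper argues $2$-adically (noting that $\zeta^{-1}+1+\zeta$ is a conjugation-fixed root of unity in $\Z_2[\zeta]$, hence $\pm 1$, forcing $n\leq 2$), whereas you invoke the archimedean absolute value---which is legitimate since $1+\zeta+\zeta^2\in\bQ(\zeta)$ is algebraic over $\bQ$ and ``being a root of unity'' can be tested after any embedding into $\C$, though a word to that effect would make the step airtight.
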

\begin{proof}
We use the equivalence of categories constructed in \cite[Section~2]{lubintatemodels}. 
This allows us to go 
from working with graded rings to ungraded rings by considering the formal group law over 
$K(k,m)_0$ given by $\widetilde{\Gamma}_h = u\Gamma u^{-1}$. 
The action of \cref{fglaction} translates to the power series 
\[\psi_\gamma(x) = x +_{\widetilde{\Gamma}_h} x^{2^m}\] over $K(m)_0$. From this description, 
we see that under the identification \[\text{End}_{\overline{\F}_2}(\Gamma_h)\cong\mathcal{O}\] we have
\[\zeta_{2^n} - 1 = \xi^m,\] for some primitive root $\zeta_{2^n}$.    
Clearly if $n=1$, then this element is central, and if $n=2$, then $\xi^{4m} = -4$ is central.  
It remains to show that no power of $\pi = \zeta_{2^n}-1$ lies in $\Z_2^\times$ for $n > 2$ \cite{stackanswer}. 
To see this first note that 
if $\pi^r\in\Z_2^\times$ then for any $\sigma\in\textrm{Gal}(\mathbb{Q}_2(\zeta_{2^n})/\mathbb{Q}_2)$,
$\frac{\sigma(\pi)}{\pi}$ is an $r$th root of unity. Taking $\sigma$ such that 
$\sigma(\zeta_{2^n}) = \zeta_{2^n}^3$, we find that \[\frac{\sigma(\zeta_{2^n}-1)}{\zeta_{2^n}-1} = 
1 + \zeta_{2^n} + \zeta_{2^n}^2\] is an $r$th root of unity. It follows that 
$\zeta_{2^n}^{-1} + 1 + \zeta_{2^n}$ is a root of unity in $\Z_2^\times$ hence must be $\pm 1$. 
If it is $+1$ then we must have $n=2$ and for $-1$ we must have $n=1$.  
\end{proof}

\subsection{The $C_{2^n}$-orientation}
In order to make use of the known slice computations for Real bordism theories and their norms,
we will need to make use of an equivariant spectrum-level version of the ring map
\[\pi_\ast^e\BPCn\to\pi_\ast^e E^{\R}_h\] from \cref{ringmap}. 
We will call such a map a \textit{$C_{2^n}$-orientation} of $E^{\R}_h$, and its construction 
is a consequence of the Hahn-Shi Real orientation \cite{realorientation}.

\begin{prop}[{\cite[Theorem~1.7]{lubintatemodels}}]\label{realorientationmap}
There is a $C_{2^n}$-equivariant map 
\[\phi:\BPCn\to E^{\R}_h\] inducing 
the ring map of \cref{ringmap} on underlying homotopy groups. 
\end{prop}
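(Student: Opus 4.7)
The plan is to produce $\phi$ by Real-orienting $E^{\R}_h$ and then applying the norm-restriction adjunction for commutative $C_{2^n}$-equivariant ring spectra. The output is a map of genuine $C_{2^n}$-equivariant commutative rings, and the underlying behavior can then be traced through the construction of $\Gamma_h$ in \cite{lubintatemodels}.

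First, I would promote $E^{\R}_h$ to a Real-oriented $C_2$-equivariant commutative ring spectrum. The Hahn--Shi theorem says that any Lubin--Tate spectrum associated to a formal group law defined over a subfield fixed by Frobenius admits a Real orientation; the formal group law $\Gamma_h$ of \cref{rblt} is built precisely so that this applies, giving a $C_2$-equivariant map $\mur \to i^*_{C_2} E^{\R}_h$ of commutative rings. Composing with the Araki idempotent produces a $C_2$-equivariant map of commutative ring spectra
\[
\psi : \bpr \longrightarrow i^*_{C_2} E^{\R}_h
\]
inducing the standard map $\pi^e_*\bpr = \mathrm{BP}_* \to \pi^e_* E^{\R}_h$ classifying the $2$-typicalization of $F_h$ (viewed underlying).

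Next, I would use the fact that on commutative algebras the norm $N_{C_2}^{C_{2^n}}$ is left adjoint to the restriction $i^*_{C_2}$. The map $\psi$ is thus adjoint to a $C_{2^n}$-equivariant map of commutative rings
\[
\phi : \BPCn = N_{C_2}^{C_{2^n}} \bpr \longrightarrow E^{\R}_h,
\]
which, unwound, is the composite of $N_{C_2}^{C_{2^n}}\psi$ with the multiplicative counit $N_{C_2}^{C_{2^n}} i^*_{C_2} E^{\R}_h \to E^{\R}_h$.

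The remaining content is the identification of $\phi$ on underlying homotopy. After restricting to $e$, the map $N_{C_2}^{C_{2^n}}\psi$ becomes the $2^{n-1}$-fold smash of $\psi$, and the counit becomes the iterated multiplication on $E^{\R}_h$. Thus on $\pi^e_*$, $\phi$ sends the conjugates $\gamma^i t_j \in \pi^e_*\BPCn$ to the images under $\psi$ of the corresponding generators $t_j \in \pi^e_* \bpr$ pushed around by the $C_{2^n}$-action on $E^{\R}_h$. The main obstacle, and the only genuinely computational step, is to match these values with the prescribed formula in \cref{ringmap}. This is done by observing that $\psi$ classifies the same $2$-typical formal group law underlying $E^{\R}_h$, and that the construction of $\Gamma_h$ in \cite{lubintatemodels} is designed so that the strict isomorphism $\psi_\gamma$ realizing the $C_{2^n}$-action has generators sent to $t_j$ for $j<m$, to $u^{2^m-1}$ for $j=m$, and to zero thereafter. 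Equivariance of $\phi$ then automatically propagates this across the $C_{2^n}$-orbits, yielding exactly the ring map \cref{ringmap}.
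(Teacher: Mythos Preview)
Your proposal is correct and follows essentially the same approach as the paper: use the Hahn--Shi Real orientation to get a $C_2$-equivariant map $\bpr\to i^*_{C_2}E^{\R}_h$, apply the norm--restriction adjunction to obtain $\phi$, and then argue that $C_{2^n}$-equivariance forces the coefficients of $\psi_\gamma$ to match those of the corresponding automorphism of $F_h$, yielding the ring map of \cref{ringmap}. Your write-up is slightly more explicit in unwinding the adjunction and the underlying homotopy computation, but the argument is the same.
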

\begin{proof}
We include a short proof sketch for completeness. The Real-orientation 
theorem \cite{realorientation} gives us a $C_2$-equivariant map 
\[\bpr\to i^\ast_{C_2} E^{\R}_h.\] Using the norm-restriction adjunction 
on equivariant commutative ring spectra we get a $C_{2^n}$-equivariant
map
\[\BPCn\to N_{C_2}^{C_{2^n}}i^\ast_{C_2}E^{\R}_h\to E^{\R}_h.\] 
Since this is $C_{2^n}$-equivariant, the coefficients of the automorphism 
$\psi_\gamma$ in $\pi_\ast^e\BPCn$ must map to the coefficients of the 
corresponding automorphism of $F_h$. This precisely specifies the 
ring map of \cref{ringmap}.
\end{proof}

We will also find it beneficial to establish that some evident factorizations
of the algebraic map in \cref{ringmap} can be lifted to spectrum-level factorizations.
Namely, the $C_{2^n}$-orientation $\BPCn\to E^{\R}_h$ should factor through

\begin{enumerate}
    \item the map to the \emph{localization} of $\BPCn$ inverting $N(\tgen_m)$, and
    \item the \emph{quotient} of $\BPCn$ by the equivariant ideal generated by $\tgen_i$ for $i> m$. 
\end{enumerate}
The factorization through $(1)$ was shown in \cite{lubintatemodels} and we will simply 
quote their result. In fact, they prove something even stronger. 

\begin{notation*}
    For each $1\leq r< n$, using the unit of the norm-restrict adjuction we have a map
    \[\mathrm{BP}^{((C_{2^r}))}\to\mathrm{BP}^{((C_{2^n}))}.\] 
    Let $\tgen^{C_{2^r}}_i$ denote the image of the generators of 
    $\pi_{\ast\rho_2}^{C_2}\mathrm{BP}^{((C_{2^r}))}$ under this map.
\end{notation*}

\begin{prop}[{\cite[Proposition~6.3]{lubintatemodels}}]\label{localizationfactorization}
The $C_{2^n}$-orientation of $E^{\R}_h$ factors through the localization 
\begin{center}
\begin{tikzcd}
\BPCn\ar[r]\ar[d] & E^{\R}_h\\
D_h^{-1}\BPCn\ar[ur]
\end{tikzcd}
\end{center}
\noindent inverting an element $D_h$ divisible by
\[N(\tgen_h^{C_2})N(\tgen_{h/2}^{C_4})\cdots N(\tgen_m).\]
In particular, the $C_{2^n}$-orientation factors through the localization inverting $N(\tgen_m)$. 
\end{prop}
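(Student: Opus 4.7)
The plan is to invoke the universal property of $\E_\infty$-ring localization: since $\BPCn \to E^{\R}_h$ is a map of $C_{2^n}$-equivariant $\E_\infty$-rings, it factors through $D_h^{-1}\BPCn$ as soon as $D_h$ is sent to an invertible class of $\pi^{C_{2^n}}_\star E^{\R}_h$. I would take
\[
D_h := \prod_{r=1}^{n} N_{C_{2^r}}^{C_{2^n}}\!\bigl(\tgen^{C_{2^r}}_{h/2^{r-1}}\bigr),
\]
matching the divisibility asserted in the statement, and reduce everything to showing that each factor is a unit in the equivariant coefficient ring of $E^{\R}_h$.

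First I would check the underlying statement. By the explicit ring map of \cref{ringmap}, $t_m\in\pi^e_\star\BPCn$ is sent to $u^{2^m-1}\in R_n(k,m)$, which is invertible because $u$ is inverted by construction of $R_n(k,m)$. Applying this to each $\mathrm{BP}^{((C_{2^r}))}$ separately, the generator $\tgen^{C_{2^r}}_{h/2^{r-1}}$ is the class whose image under the analogous $C_{2^r}$-Real orientation is, after restriction to the underlying, a power of the corresponding unit; strong-evenness of $i^\ast_{C_2}\BPCn$ then upgrades this to a $C_2$-equivariant statement. Naturality of the Real orientation along the norm--restriction unit maps $\mathrm{BP}^{((C_{2^r}))}\to\BPCn$ guarantees that these pieces are compatible with the global orientation of \cref{realorientationmap}.

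Next I would promote each of these underlying units to an equivariant unit via the multiplicative norm. Since $N_{C_{2^r}}^{C_{2^n}}$ is a multiplicative operation on equivariant commutative ring spectra, the class $N_{C_{2^r}}^{C_{2^n}}(\tgen^{C_{2^r}}_{h/2^{r-1}})$ maps under the Real orientation to a product of Galois conjugates of $u^{2^{h/2^{r-1}}-1}$, which is again a unit of $R_n(k,m)$. Since $E^{\R}_h$ is a $\mK(h)$-local Lubin--Tate spectrum with Noetherian local coefficient ring, a class in $\pi^{C_{2^n}}_\star E^{\R}_h$ is a unit precisely when its restriction to $\pi^e_\star E^{\R}_h$ is; hence each factor of $D_h$ is an equivariant unit, and so is $D_h$ itself. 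The factorization through $D_h^{-1}\BPCn$ then follows from the universal property.

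The main obstacle I anticipate is the final unit-detection step: verifying carefully that restriction to the underlying truly detects units in the $\mathrm{RO}(C_{2^n})$-graded homotopy of $E^{\R}_h$, and that the multiplicative norm intertwines with the Real orientation in the $\mathrm{RO}(C_{2^n})$-graded sense. The degree shift introduced by $N_{C_{2^r}}^{C_{2^n}}$ is non-trivial, so one must be certain the target lives in a graded piece where the local-ring structure applies. Once the identification of $\pi^{C_{2^n}}_\star E^{\R}_h$ with the appropriate completion of $R_n(k,m)$ is in hand, the remainder of the argument is essentially formal; this is where the bulk of the technical work would be concentrated.
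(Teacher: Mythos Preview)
The paper does not give its own proof of this proposition; it quotes the result directly from \cite[Proposition~6.3]{lubintatemodels} and only later remarks that the same argument ``goes through verbatim'' for the truncated version. So there is no in-paper argument to compare against beyond that pointer.

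Your overall strategy --- show that $D_h$ is sent to a unit and invoke the universal property of localization --- is exactly the right one and is what the cited reference does. Two points deserve attention, however. First, you open by assuming that $\BPCn\to E^{\R}_h$ is a map of $C_{2^n}$-$\E_\infty$-rings. The paper is careful \emph{not} to assume this: $\bpr$ is not known to be an $\E_\infty$-$C_2$-ring, and the later Assumption~\ref{e2rho} (that the Real orientation is merely $\E_2$) is singled out precisely because multiplicativity of the orientation is delicate. The localization $D_h^{-1}\BPCn$ should therefore be formed and factored at the level of $\MUCn$-modules, where the universal property reads: an $\MUCn$-module map out of $\BPCn$ factors through $D_h^{-1}\BPCn$ once $D_h$ acts invertibly on the target. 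With this reformulation the rest of your outline goes through.

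Second, your unit-detection step (``a class in $\pi^{C_{2^n}}_\star E^{\R}_h$ is a unit precisely when its restriction to $\pi^e_\star E^{\R}_h$ is'') is stronger than you need and not entirely obvious. A cleaner route, and the one used in \cite{lubintatemodels}, is to observe that the $C_2$-class $\tgen_m$ already maps to the $C_2$-equivariant unit $\bar u^{2^m-1}\in\pi^{C_2}_{(2^m-1)\rho_2}E^{\R}_h$ (this is the content of the Hahn--Shi Real orientation together with strong evenness), and then use that the norm $N_{C_2}^{C_{2^n}}$ is multiplicative, so sends units to units. This bypasses any appeal to restriction detecting units. (A minor notational slip: the norms in the displayed product are all $N_{C_2}^{C_{2^n}}$, applied to the $C_2$-level classes $\tgen^{C_{2^r}}_{h/2^{r-1}}\in\pi^{C_2}_{\ast\rho_2}\BPCn$, not $N_{C_{2^r}}^{C_{2^n}}$ as you wrote.)
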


We now move on to $(2)$. The author would like to thank Jeremy Hahn for outlining the argument to us. 
By a \emph{quotient} of a module over a 
commutative $G$-ring spectrum, we mean in the sense of \cite[Section~10.10]{purplebook}, to which we refer 
the interested reader to for an in-depth discussion. 

\begin{defn}
Let $R$ be a $G$-equivariant commutative ring spectrum. For $x\in\pi_\star^{H}R$, let 
$R[G\cdot x]$ denote the \emph{twisted monoid ring} generated by $x$. 
Then for any $R$-module, $M$, the \emph{quotient} of $M$ by $(G\cdot x)$ is given by 
\[M/(G\cdot x) := M\smsh_{R[x]} R. \]
\end{defn}

\begin{defn}
The quotient of $\BPCn$ by $(C_{2^n}\cdot\tgen_{m+1},\ldots)$ is defined to be 
\[\BPCn\langle m\rangle: = 
\colim_w \frac{\BPCn}{(C_{2^n}\cdot\tgen_{m+1})}\smsh_{\MUCn}\cdots\smsh_{\MUCn}\frac{\BPCn}{(C_{2^n}\cdot\tgen_{m+w})}.\]
\end{defn}

\begin{rmk}
In order to prove $(2)$, we make the following assumption. 
See \cite{roytman} for some work in-progress in this direction. 
\end{rmk}

\begin{ass}\label{e2rho}
The real orientation map \[\mathrm{MU}_\R\to E^{\R}_h\] is an $\mathbb{E}_{2}$-ring map. 
\end{ass} 

\begin{prop}\label{quotientfactorization}
    Given \cref{e2rho}, the $C_{2^n}$-orientation factors as
    \begin{center}
    \begin{tikzcd}
    \BPCn\ar[r,"\phi"]\ar[d] & E^{\R}_h \\
    \BPCn\langle m\rangle\ar[ur]
    \end{tikzcd}
    \end{center}
\end{prop}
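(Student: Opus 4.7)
The plan is to realize the factorization as a colimit of successive partial quotients. Set $R_0 = \BPCn$ and
\[
R_w = \frac{\BPCn}{(C_{2^n}\cdot\tgen_{m+1})}\smsh_{\MUCn}\cdots\smsh_{\MUCn}\frac{\BPCn}{(C_{2^n}\cdot\tgen_{m+w})},
\]
so that $\BPCn\langle m\rangle = \colim_w R_w$ by definition. I will construct compatible factorizations $\phi_w\colon R_w \to E^{\R}_h$ of $\phi$ by induction on $w$, then pass to the colimit.

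For the inductive step, given $\phi_w$, I would first verify that the image of any representative of the orbit $C_{2^n}\cdot\tgen_{m+w+1}$ under $\phi_w$ is equivariantly null-homotopic. On underlying homotopy the image of $\tgen_{m+w+1}$ vanishes by the explicit formula in \cref{ringmap}. Since $i_{C_2}^\ast E^{\R}_h$ is strongly even (by the same argument that shows $i_{C_2}^\ast \BPCn$ is strongly even, applied to the formal group law $F_h$), this vanishing lifts to the vanishing of the image of $\tgen_{m+w+1}$ in $\pi_{(2^{m+w+1}-1)\rho_2}^{C_2} E^{\R}_h$. The $C_{2^n}$-equivariance of $\phi_w$ then propagates the null-homotopy across the full $C_{2^n}$-orbit.

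The main obstacle is to promote these null-homotopies of elements in equivariant homotopy to an actual factorization through the quotient in the $C_{2^n}$-equivariant stable category. Here \cref{e2rho} plays its essential role: as an $\mathbb{E}_2$-$\mur$-algebra, the composite structure map from $\mur$ and the norm-restriction adjunction equip $E^{\R}_h$, via $\phi$, with a module structure over the twisted monoid ring $\MUCn[C_{2^n}\cdot\tgen_{m+w+1}]$ on which the chosen orbit acts null-homotopically with enough coherence to define a map out of the relative smash product defining the quotient. This produces the extension $\phi_{w+1}\colon R_{w+1}\to E^{\R}_h$ whose restriction along $R_w\to R_{w+1}$ recovers $\phi_w$. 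Compatibility of the $\phi_w$ for varying $w$ is thereby built into the inductive construction, and taking the colimit yields the desired factorization $\BPCn\langle m\rangle \to E^{\R}_h$ of $\phi$. The delicate point throughout — and the reason \cref{e2rho} is invoked rather than appealed to as known — is checking that the $\mathbb{E}_2$-structure truly supplies this coherence at each stage of the induction.
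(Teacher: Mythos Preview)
Your overall shape---build maps $\phi_w\colon R_w\to E^{\R}_h$ and pass to the colimit---matches the paper's, and your use of the $\mathbb{E}_2$-assumption to obtain an $\MUCn$-linear multiplication on $E^{\R}_h$ (so that maps out of the smash factors can be combined) is exactly how the paper deploys \cref{e2rho}. But you have misplaced where the real difficulty lies, and this leaves a gap.

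The inductive step is not genuinely inductive: since $R_{w+1}=R_w\smsh_{\MUCn}\BPCn/(C_{2^n}\cdot\tgen_{m+w+1})$, extending $\phi_w$ to $\phi_{w+1}$ only requires, in addition to $\phi_w$, a factorization of $\phi$ through the \emph{single} quotient $\BPCn/(C_{2^n}\cdot\tgen_{m+w+1})$, together with the $\MUCn$-linear multiplication. The second ingredient is what \cref{e2rho} supplies. The first ingredient---factoring through one quotient---is where your argument is incomplete. You observe that $\tgen_i$ is null in $\pi_{\ast\rho_2}^{C_2}E^{\R}_h$ and then appeal to ``enough coherence'' from the $\mathbb{E}_2$-structure to pass through the relative smash product $\BPCn\smsh_{\MUCn[C_{2^n}\cdot\tgen_i]}\MUCn$. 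But a null-homotopy of an element does not by itself produce a factorization of the $\MUCn[C_{2^n}\cdot\tgen_i]$-module structure through the augmentation; that requires precisely the coherence you leave unverified, and the $\mathbb{E}_2$-hypothesis on $\mur\to E^{\R}_h$ does not obviously provide it at the $C_{2^n}$-level.

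The paper's resolution is a norm trick you do not mention: restrict to $C_2$, where $i^\ast_{C_2}\BPCn/(C_2\cdot\tgen_i)$ is literally the cofiber of multiplication by $\tgen_i$, so the null-homotopy immediately yields a $C_2$-factorization; then apply $N_{C_2}^{C_{2^n}}$ and use the unit and counit of the norm--restriction adjunction to promote this to the desired $C_{2^n}$-factorization through $\BPCn/(C_{2^n}\cdot\tgen_i)$. The $\mathbb{E}_2$-assumption is then used only once, at the end, to smash these single-quotient factorizations together over $\MUCn$.
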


\begin{proof}
One can verify that on the $C_2$-level, the quotient $i^\ast_2\BPCn/(C_2\cdot\tgen_i)$ is simply the 
cofiber of the multiplication by $\tgen_i$ map. 
Since each $\tgen_i$ maps to zero under $\phi$, this implies a factorization 
\begin{center}
    \begin{tikzcd}
    i^\ast_2\BPCn\ar[r,"i^\ast_2\phi"]\ar[d] & i^\ast_2E^{\R}_h \\
    i^\ast_2\BPCn/(C_2\cdot\tgen_i)\ar[ur]
    \end{tikzcd},
\end{center} 
which can be normed up to a diagram 
\begin{center}
    \begin{tikzcd}
    N_2^{2^n}i^\ast_2 \BPCn\ar[r]\ar[d] & N_2^{2^n}i^\ast_2 E^{\R}_h\\
    \BPCn/(C_{2^n}\cdot\tgen_i)\ar[ur]
    \end{tikzcd}.
\end{center} 
Then, the norm of the unit of the norm-restriction adjuction gives a map 
\[N_2^{2^n}i^\ast_2 \BPCn = N_2^{2^n}i^\ast_2 N_2^{2^n}\bpr\to N_2^{2^n}\bpr = \BPCn.\]
Using this, together with the counit of the norm-restriction adjunction on $E_h^{\R}$, one can 
form the factorization
\begin{center}
    \begin{tikzcd}
    \BPCn\ar[r,"\phi"]\ar[d] & E^{\R}_h \\
    \BPCn/(C_{2^n}\cdot\tgen_i)\ar[ur]
    \end{tikzcd},
\end{center} 
\noindent for each $i > m$. From \cref{e2rho} and the fact that the norm is symmetric monoidal, we 
have that $\MUCn\to E^{\R}_h$ is also an $\mathbb{E}_2$-ring map. Therefore, $E^{\R}_h$ has an 
$\MUCn$-linear multiplication and we can form 
\begin{equation*}
\begin{split}
\frac{\BPCn}{(C_{2^n}\cdot\tgen_{m+1})}\smsh_{\MUCn} &\cdots\smsh_{\MUCn} \frac{\BPCn}{(C_{2^n}\cdot\tgen_{m+w})} \\
&\to E^{\R}_h\smsh_{\MUCn}\cdots\smsh_{\MUCn} E^{\R}_h\to E^{\R}_h.
\end{split}
\end{equation*} Taking the colimit gives the desired 
factorization.
\end{proof}

\noindent Lastly, we combine \cref{localizationfactorization} and \cref{quotientfactorization}. 

\begin{thm}\label{factorization}
The $C_{2^n}$-orientation factors as 
\begin{center}
\begin{tikzcd}
\BPCn\ar[r]\ar[d] & E^{\R}_h \\
D_h^{-1}\BPCn\langle m\rangle\ar[ur]
\end{tikzcd}
\end{center}
\end{thm}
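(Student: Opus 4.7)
The plan is to combine the two factorization results already established, using the universal property of localization to merge them. Having done the heavy lifting in \cref{localizationfactorization} and \cref{quotientfactorization}, the remaining work is essentially a formal consequence.

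First, I would start from \cref{quotientfactorization}, which gives a $C_{2^n}$-equivariant map
\[
q:\BPCn\langle m\rangle\to E^{\R}_h
\]
making the $C_{2^n}$-orientation $\phi$ factor as $\BPCn\to\BPCn\langle m\rangle\xrightarrow{q}E^{\R}_h$. I would then observe that by \cref{localizationfactorization}, the image of $D_h$ under $\phi$ is a unit in $\pi_\star^{C_{2^n}}E^{\R}_h$. Since the map $\BPCn\to\BPCn\langle m\rangle$ sends $D_h$ to a class also denoted $D_h$ (note that the generators $\tgen_i$ for $i\leq m$ are not killed by the quotient, so $N(\tgen_m)$, $N(\tgen_{h/2}^{C_4})$, etc.\ survive), and the composite $\BPCn\to\BPCn\langle m\rangle\xrightarrow{q}E^{\R}_h$ equals $\phi$, we conclude that $q$ sends $D_h$ to a unit.

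Next, I would invoke the universal property of Bousfield localization at a multiplicative system: any map of $\BPCn$-modules whose target receives $D_h$ as a unit factors uniquely through the localization $D_h^{-1}(-)$. Applying this to $q$, we obtain a $C_{2^n}$-equivariant factorization
\[
\BPCn\langle m\rangle\to D_h^{-1}\BPCn\langle m\rangle\xrightarrow{\widetilde{q}} E^{\R}_h
\]
compatible with $q$. Composing with $\BPCn\to\BPCn\langle m\rangle$ produces the desired factorization of the $C_{2^n}$-orientation through $D_h^{-1}\BPCn\langle m\rangle$.

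The only point that requires a bit of care is the compatibility of localization with the sequential colimit defining $\BPCn\langle m\rangle$, and the fact that $D_h^{-1}\BPCn\langle m\rangle$ can equivalently be computed as $D_h^{-1}\BPCn\smsh_{\BPCn}\BPCn\langle m\rangle$; this is automatic since smashing with the idempotent localization commutes with colimits. I do not expect a genuine obstacle here, as both the localization step and the quotient step are constructed inside the category of $C_{2^n}$-equivariant module spectra and their operations commute. Consequently, the proof is essentially a one-line application of the universal property once \cref{localizationfactorization} and \cref{quotientfactorization} are in hand.
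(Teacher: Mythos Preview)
Your proposal is correct and takes essentially the same approach as the paper. The paper's proof simply says to start from \cref{quotientfactorization} and then rerun the argument of \cite[Proposition~6.3]{lubintatemodels} with $\BPCn\langle m\rangle$ in place of $\BPCn$; what you have written is precisely that argument spelled out, namely observing that $D_h$ maps to a unit in $E^{\R}_h$ and invoking the universal property of localization.
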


\begin{proof}
Starting with the factorizaton from \cref{quotientfactorization}, the proof of 
\cref{localizationfactorization} given in \cite[Proposition~6.3]{lubintatemodels} 
goes through verbatim with the quotient, $\BPCn\langle m\rangle$, in place 
of $\BPCn$. 
\end{proof}

\begin{notation*}
    For the sake of legibility, we will write $E_{C_{2^n}}(m)$ for 
    $D_h^{-1}\BPCn\langle m\rangle$. The notation is meant to make the reader think 
    of these as $C_{2^n}$-equivariant analogs of Johnson-Wilson spectra. 
\end{notation*}

% !TEX root = master.tex
\section{Equivariant periodicities and integer shifts}\label{orientability}

\subsection{$\text{RO}(G)$-periodicities and where to find them}

The purpose of this section is to review some of the theory layed out in 
\cite[Section~3]{bbhs}. Throughout, we take $G$ to be a finite group. 
Let $R\in\textrm{Sp}^G$ be a commutative ring. We can then consider
the symmetric monoidal category of modules over $R$ \cite{BlumbergHillModuleCats}, 
its associated group of invertible modules $\textrm{Pic}_G(R)$, and obtain what we will call 
the \textit{$J$-homomorphism}: 
\begin{equation*}
\begin{split}
J:RO(G)&\to\textrm{Pic}_G(R)\\
V &\mapsto \Sigma^V R.
\end{split}
\end{equation*}

\begin{prop}[{\cite[Proposition~3.1]{bbhs}}]
Let $R^{hG}\to R$ be a faithful $G$-Galois extension for a finite group $G$ and 
view $R$ as a (cofree) genuine $G$-spectrum. Then 
$\mathrm{Mod}(R^{hG})$ and $\mathrm{Mod}_G(R)$ are equivalent categories and there 
is an isomorphism
\begin{equation*}
\begin{split}
\mathrm{Pic}_G(R) &\cong\mathrm{Pic}(R^{hG}) \\
M &\mapsto M^{hG}.
\end{split}
\end{equation*}
\end{prop}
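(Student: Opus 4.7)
The plan is to deduce this from Galois descent in the $\infty$-categorical setting, applied to the faithful $G$-Galois extension $R^{hG}\to R$. The main input is the Barr--Beck--Lurie / Mathew descent theorem: because $R^{hG}\to R$ is faithful, the base change functor $(-)\smsh_{R^{hG}}R : \Mod(R^{hG}) \to \Mod(R)$ is conservative, preserves limits, and admits a right adjoint $(-)^{hG}$, so the adjunction is comonadic. Thus $\Mod(R^{hG})$ is equivalent to the category of coalgebras for the comonad $T = (-)\smsh_{R^{hG}}R$ acting on $\Mod(R)$.

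The next step is to identify these coalgebras with $\Mod_G(R)$. Here one uses that for a Galois extension the canonical map
\[
R\smsh_{R^{hG}}R \xrightarrow{\sim} F(G_+,R)\simeq\prod_{G}R
\]
is an equivalence (this is essentially the Galois condition in the sense of Rognes). Consequently $T$ can be described as $M \mapsto F(G_+, M)$, and a $T$-coalgebra structure on an $R$-module $N$ amounts to a coherent $G$-action compatible with the $R$-action, i.e. a cofree genuine $G$-equivariant $R$-module. Tracing through the adjunction, the inverse to $(-)\smsh_{R^{hG}}R$ is precisely the homotopy fixed points functor $(-)^{hG}$.

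Finally, to get the isomorphism of Picard groups, one observes that the established equivalence $\Mod(R^{hG}) \simeq \Mod_G(R)$ is symmetric monoidal, where the source has its $\smsh_{R^{hG}}$-structure and the target has its $\smsh_R$-structure. Since $\mathrm{Pic}$ is a functor from symmetric monoidal $\infty$-categories (with equivalences) to abelian groups, this monoidal equivalence immediately restricts to an isomorphism $\mathrm{Pic}(R^{hG}) \cong \mathrm{Pic}_G(R)$. Under this isomorphism, an invertible $M \in \mathrm{Pic}_G(R)$ corresponds to $M^{hG}$, as stated.

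The main obstacle, and the only nontrivial point, is verifying the descent hypothesis and identifying the coalgebras as $\Mod_G(R)$ in the cofree setting. This requires checking that faithfulness of the Galois extension indeed gives conservativity of base change (so that Barr--Beck--Lurie applies), and that the identification $R\smsh_{R^{hG}}R \simeq F(G_+, R)$ refines to a comparison of the full cobar complex with the one computing genuine $G$-equivariant data; in particular, that ``$G$-equivariant $R$-module'' here means precisely a cofree genuine $G$-spectrum with compatible $R$-action, matching the convention fixed in the introduction.
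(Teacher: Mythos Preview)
The paper does not prove this proposition; it is quoted with a citation to \cite[Proposition~3.1]{bbhs} and used as a black box. So there is no argument in the paper to compare against, and your sketch is the standard Galois descent proof that the cited reference (and earlier sources such as Mathew, Meier, Gepner--Lawson) give.

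One genuine wrinkle in your write-up: you say that base change $(-)\smsh_{R^{hG}}R:\Mod(R^{hG})\to\Mod(R)$ ``admits a right adjoint $(-)^{hG}$''. As stated this is wrong: the right adjoint to extension of scalars $\Mod(R^{hG})\to\Mod(R)$ is restriction of scalars, and $(-)^{hG}$ is not even defined on non-equivariant $R$-modules. You are conflating two correct arguments. Either (i) run Barr--Beck--Lurie on the extension/restriction adjunction between $\Mod(R^{hG})$ and $\Mod(R)$, identify the resulting comonad on $\Mod(R)$ as $F(G_+,-)$ via $R\smsh_{R^{hG}}R\simeq F(G_+,R)$, and then recognize its coalgebras as $\Fun(BG,\Mod(R))\simeq\Mod_G(R)$; or (ii) work from the outset with the adjunction
\[
(-)\smsh_{R^{hG}}R:\Mod(R^{hG})\rightleftarrows\Mod_G(R):(-)^{hG},
\]
where base change lands in $G$-equivariant $R$-modules through the $G$-action on $R$, and show directly that the unit and counit are equivalences (faithfulness gives conservativity of the left adjoint, and the Galois condition gives the counit). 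Either route is fine, and once the equivalence of module categories is established your deduction of the $\mathrm{Pic}$ isomorphism from symmetric monoidality is correct.
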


\begin{rmk}\label{cofreeJhom}
Thus, under the assumptions of the proposition, we may view the $J$-homomorphism
as a map 
\begin{equation*}
\begin{split}
J:\mathrm{RO}(G) &\to\mathrm{Pic}(R^{hG})\\ 
V &\mapsto (\Sigma^V R)^{hG}.
\end{split}
\end{equation*}
We will not notationally distinguish these two maps.  
\end{rmk}

\begin{defn}\label{periodicdef}
We say $R$ is \textit{$V$-periodic} if $V\in RO(G)$ is in 
the kernel of the $J$-homomorphism, that is, if $\Sigma^V R\simeq_G R$. 
We refer to elements $V$ in the kernel of $J$ as \emph{$\mathrm{RO}(G)$-periodicities} of $R$. 
\end{defn}

\noindent Our goal for this section is to determine a useful condition that will 
allow us to deduce some $\mathrm{RO}(G)$-periodicites. We begin with the following observation.

\begin{prop}\label{equivalenceobservation}
There is a $G$-equivalence of $R$-modules \[u:\Sigma^{|V|}R \xrightarrow{\sim_G} \Sigma^V R\] 
if and only if there is a $G$-equivariant map 
\[\hat{u}:S^{|V|}\to \Sigma^V R\] such that the composition
\[\Sigma^{|V|}R\xrightarrow{\hat{u}\smsh\eta} \Sigma^V R\smsh R\xrightarrow{1\smsh\mu} \Sigma^V R\]
is a $G$-equivalence of $R$-modules. 
\begin{proof}
One direction is evident, namely, if we have such a $\hat{u}$ then the composition 
gives the equivalence $u$. For the other direction
consider the diagram below. 

\begin{center}
\begin{tikzcd}
    \Sigma^{|V|}R\ar[r,"\hat{u}\smsh 1"]\ar[d,"1\smsh\eta\smsh 1",swap] & 
    \Sigma^V R\smsh R\ar[r,"1\smsh\mu"] & \Sigma^V R\\
    \Sigma^{|V|}R\smsh R\ar[ur,"u\smsh 1"]\ar[rr,"1\smsh\mu"] & &\Sigma^{|V|}R\ar[u,"u"]
\end{tikzcd}
\end{center}

If we have an equivalence $u$, then we define 
$\hat{u}$ by precomposition of $u$ with \[1\smsh\eta:S^{|V|}\to \Sigma^{|V|}R.\] 
The map $\hat{u}$ is 
defined so that the triangle on the left commutes. Since $u$ is a map
of $R$-modules the trapezoid on the right commutes. It follows that the 
border rectangle commutes, implying that the top edge of this rectangle is
an equivalence. 
\end{proof}
\end{prop}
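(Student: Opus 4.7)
The plan is to treat the two implications separately. The reverse implication is nearly tautological: given $\hat{u}$, we may define
\[ u:=(1\smsh\mu)\circ(\hat{u}\smsh\eta), \]
which by hypothesis is a $G$-equivalence, and which is $R$-linear by associativity of $\mu$.

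For the forward implication, given an $R$-linear equivalence $u:\Sigma^{|V|}R\xrightarrow{\sim_G}\Sigma^V R$, the natural candidate is the restriction of $u$ along the unit, namely
\[ \hat{u}:=u\circ(1_{S^{|V|}}\smsh\eta)\colon S^{|V|}\to\Sigma^V R. \]
The only content is then to check that the composition $(1\smsh\mu)\circ(\hat{u}\smsh\eta)$ recovers $u$. I would verify this with a short diagram chase: a triangle encoding the definition of $\hat{u}$ (as $u$ precomposed with $1\smsh\eta\smsh 1$) and a naturality cell expressing that $u$ is $R$-linear (so that $1\smsh\mu$ can be pushed across $u$). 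Reading around the border of the diagram then presents the composition as $u$ followed by $(1\smsh\mu)\circ(1\smsh\eta\smsh 1)$, which collapses to $u$ via the unit axiom $\mu\circ(\eta\smsh 1)=1_R$.

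I do not expect any substantial obstacle here. Morally, the proposition is just the free-forgetful adjunction between $G$-spectra and $R$-modules: an $R$-linear map out of the free $R$-module $\Sigma^{|V|}R\simeq S^{|V|}\smsh R$ is determined by its restriction to $S^{|V|}$, and this correspondence preserves the property of being an equivalence. The $G$-equivariance and the $\mathrm{RO}(G)$-grading play no role beyond fixing the ambient symmetric monoidal category in which we work.
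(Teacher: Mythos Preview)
Your proposal is correct and follows essentially the same argument as the paper: for the nontrivial direction you define $\hat{u}=u\circ(1\smsh\eta)$ and verify via the same two-cell diagram (the triangle encoding the definition of $\hat{u}$ and the cell encoding $R$-linearity of $u$) that the top composite agrees with $u$. Your closing remark about the free--forgetful adjunction is a helpful conceptual gloss that the paper does not make explicit, but the proof itself is the same.
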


\begin{defn}{(\cite[Definition~3.3]{bbhs})}
An \textit{$R$-orientation} of $V$ is an element in $\pi_{|V|}^G\Sigma^V R$ which 
is invertible when viewed as an element of the $RO(G)$-graded ring $\pi_\star^G R$ under the 
isomorphism $\pi_{|V|}^G \Sigma^V R\cong\pi_{|V|-V}^G R$. 
We say $V$ is \textit{$R$-orientable} if there exists an $R$-orientation of $V$. 
\end{defn}

\begin{rmk}
We should note that there are choices involved when working with $\text{RO}(G)$-graded homotopy groups 
such as the isomorphisms $\pi_{|V|}^G\Sigma^V R\cong\pi_{|V|-V}^G R$ used in the definition above. That such choices 
can be made consistently has been proved, for example, in \cite{DuggerSigns}. 
Nevertheless, the property of being an $R$-orientation is invariant under the choice of isomorphisms, 
for any two choices will differ by multiplication by a unit in the Burnside ring $\pi_0^G \sphere$.  
\end{rmk}

\begin{prop}
$R$ is $(|V|-V)$-periodic if and only if $V$ is $R$-orientable.
\end{prop}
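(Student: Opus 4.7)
The plan is to deduce this proposition as an essentially immediate consequence of the previous \cref{equivalenceobservation}, together with the identification of $R$-module maps between suspensions of $R$ in terms of $\pi_\star^G R$.

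First I would apply \cref{equivalenceobservation} to translate $(|V|-V)$-periodicity of $R$ into a question about maps from a representation sphere. Namely, $R$ being $(|V|-V)$-periodic means exactly that $\Sigma^{|V|}R\simeq_G \Sigma^V R$ as $R$-modules, and the proposition tells us this holds if and only if there exists a $G$-equivariant map $\hat u:S^{|V|}\to \Sigma^V R$ for which the composite
\[
\Sigma^{|V|}R\xrightarrow{\hat u\smsh\eta}\Sigma^V R\smsh R\xrightarrow{1\smsh\mu}\Sigma^V R
\]
is a $G$-equivalence of $R$-modules. In other words, $(|V|-V)$-periodicity is equivalent to the existence of a class in $\pi_{|V|}^G\Sigma^V R$ whose associated multiplication map is an equivalence.

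Next I would recognize that the displayed composite is precisely the $R$-module map given by multiplication by the homotopy class $[\hat u]\in\pi_{|V|}^G\Sigma^V R$. The standard principle for module spectra over a ring spectrum tells us that such a multiplication-by-$x$ map is an equivalence of $R$-modules if and only if $x$ is a unit in the $\mathrm{RO}(G)$-graded ring $\pi_\star^G R$. Transporting through the canonical isomorphism
\[
\pi_{|V|}^G\Sigma^V R\cong \pi_{|V|-V}^G R,
\]
this is exactly the assertion that $[\hat u]$ is an $R$-orientation of $V$ in the sense of the preceding definition.

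Putting these two steps together yields both directions of the equivalence. The one genuinely substantive input is the unit criterion for $R$-module self-equivalences; once that is in hand, the rest is bookkeeping, so I do not expect a serious obstacle. The only mild subtlety is the remark already flagged by the authors, that the isomorphism $\pi_{|V|}^G\Sigma^V R\cong\pi_{|V|-V}^G R$ involves sign conventions, but since being a unit is preserved under multiplication by units of $\pi_0^G\sphere$, the notion of $R$-orientability and hence the statement is insensitive to those choices.
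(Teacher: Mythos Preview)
Your proposal is correct and follows essentially the same approach as the paper: both reduce the statement to \cref{equivalenceobservation} and then identify the condition ``multiplication by $\hat u$ is an equivalence'' with invertibility of $\hat u$ in $\pi_\star^G R$. The only cosmetic difference is that the paper makes the latter step concrete by explicitly writing down the inverse map (using the inverse of $\hat u$ in the graded ring) rather than invoking the unit criterion as a black box, but the content is the same.
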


\begin{proof}
Suppose $\hat{u}\in\pi_{|V|}^G\Sigma^V R$ is an $R$-orientation of $V$. Then $|V|$-suspending 
the inverse of $\Sigma^{-V}\hat{u}$ gives an element $\hat{u}'\in\pi_V^G\Sigma^{|V|}R$ and the 
composition 
\[\Sigma^V R\xrightarrow{\hat{u}'\smsh\eta}\Sigma^{|V|}R\smsh R\xrightarrow{1\smsh\mu}\Sigma^{|V|}R\] 
is an inverse equivalence to the composition of \cref{equivalenceobservation}.

Now suppose $\Sigma^{|V|}R\to \Sigma^V R$ is a $G$-equivalence of $R$-modules. Then smashing 
this equivalence with the inverse of the source gives a map 
\[R\to \Sigma^{V-|V|}R\] which by precomposition with the unit corresponds to an element in 
$\pi_{|V|}^G\Sigma^V R$. 
\end{proof}

\begin{rmk}\label{cofreeRorientations}
Assuming $R$ is cofree 
and that $R^{hG}\to R$ is a faithful $G$-Galois extension,
then, as argued in the paragraph following Theorem 6.4 in \cite{hillmeiertmf}, $\Phi^G R$ is contractible
so by Corollary 10.6 in \cite{hhr16}, $\Sigma^V R$ is also cofree.
This gives us an isomorphism 
\[\pi_\ast^G(\Sigma^V R)\to \pi_\ast^G F(EG_+,\Sigma^V R)\cong\pi_\ast(\Sigma^V R)^{hG}.\]
We may then view an $R$-orientation as a class in 
$\pi_{|V|}(\Sigma^V R)^{hG}$. 
\end{rmk}

Now consider $V\in \mathrm{RO}(G)$ a classically orientable representation, i.e. one for which
the composition \[G\xrightarrow{V}O(\dim(V))\xrightarrow{\mathrm{det}}\{\pm 1\}\] is trivial. 
Choose an identification $t:i^\ast_e S^{|V|}\to i^\ast_e S^V$ of underlying topological spaces.
For any $R$, this defines an element $[t]\in\pi_{|V|}^e\Sigma^V R$ by smashing $t$ with the unit 
map. Although $t$ itself is not $G$-equivariant, since $V$ is classically orientable $t$ is $G$-equivariant up to homotopy: 
\[g\cdot[t] = [gtg^{-1}] = [t].\] The class
$[t]$ is then in the $G$-fixed points of the resulting $G$-action on the 
homotopy groups
\[ [t]\in(\pi_{|V|}^e\Sigma^V R)^G.\] 
Note that $[t]$ is invertible when viewed as an element of 
$\pi_0^e R$. This leads us to consider the following definition. 

\begin{defn}{(\cite[Definition~3.7]{bbhs})}
Let $V$ be a classically orientable representation. 
A \textit{pseudo-$R$-orientation} of $V$ is an element in 
$(\pi_{|V|}^e\Sigma^V R)^G$ which is invertible when viewed as an element 
of $(\pi_0^e R)^G$ under the isomorphism 
$\pi_{|V|}^e\Sigma^V R\cong\pi_0^e R$. 
We say $V$ is \textit{pseudo-$R$-orientable}
if there exists a pseudo-$R$-orientation of $V$. 
\end{defn}

The discussion above shows that if $V$ is classically orientable then 
a choice of orientation gives rise to a pseudo-$R$-orientation for any $R$.
The question we are interested in is when a pseudo-$R$-orientation can be 
refined to a $G$-equivariant map, that is, to an $R$-orientation.

\begin{prop}[{\cite[Proposition~3.10]{bbhs}}]
\label{orientationprop}
Let $V$ be a classically orientable $G$-representation and $R$ be 
cofree with $R^{hG}\to R$ a faithful $G$-Galois extension. 
Then $V$ is $R$-orientable if and only if there exists a 
pseudo-$R$-orientation
\[\tilde{u}_V\in H^0(G,\pi_{|V|}^e\Sigma^V R)\] 
which is a permanent cycle in the HFPSS
\[H^s(G,\pi_t(\Sigma^V R))\implies \pi_{t-s}(\Sigma^V R)^{hG}.\]
\end{prop}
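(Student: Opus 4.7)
The plan is to reformulate both sides of the equivalence in terms of the HFPSS, so that the claim becomes transparent via the edge homomorphism. Under the cofreeness and Galois assumptions, $\Sigma^V R$ is again cofree as in \cref{cofreeRorientations}, so we may identify $\pi_{|V|}^G \Sigma^V R \cong \pi_{|V|}(\Sigma^V R)^{hG}$. On the other hand, the $E_2^{0,|V|}$-term of the HFPSS is exactly $H^0(G, \pi_{|V|}^e \Sigma^V R)$, which is the group in which pseudo-$R$-orientations live. Under these identifications the edge homomorphism
\[
\pi_{|V|}^G \Sigma^V R \;\cong\; \pi_{|V|}(\Sigma^V R)^{hG} \;\longrightarrow\; E_\infty^{0,|V|} \;\subseteq\; H^0(G, \pi_{|V|}^e \Sigma^V R)
\]
realizes restriction of a $G$-equivariant class to its underlying class.

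For the forward direction, given an $R$-orientation $u \in \pi_{|V|}^G \Sigma^V R$, its image under the edge homomorphism is by construction a permanent cycle in $E_\infty^{0,|V|}$, and its restriction to $\pi_0^e R$ is a unit because $u$ is invertible in the $\mathrm{RO}(G)$-graded ring $\pi_\star^G R$; hence it is a pseudo-$R$-orientation. For the backward direction, suppose $\tilde u_V$ is a pseudo-$R$-orientation that is a permanent cycle at $E_\infty^{0,|V|}$. Since classes at filtration zero in the first quadrant are never hit by differentials and the filtration at $s=0$ is the full abutment, $\tilde u_V$ admits a lift to some $u \in \pi_{|V|}(\Sigma^V R)^{hG} \cong \pi_{|V|}^G \Sigma^V R$, corresponding to a $G$-equivariant map $\hat u : S^{|V|} \to \Sigma^V R$. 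By \cref{equivalenceobservation}, it suffices to show that the $R$-linear extension $\Sigma^{|V|} R \to \Sigma^V R$ is a $G$-equivalence.

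The underlying map is multiplication by $\tilde u_V$, which is an equivalence since $\tilde u_V$ is invertible in $\pi_0^e R$. The main obstacle is upgrading this underlying equivalence to a $G$-equivariant one. This is precisely where cofreeness enters: an underlying equivalence between cofree $G$-spectra induces a $G$-equivariant isomorphism on $\pi_\ast^e$, hence an isomorphism of $E_2$-pages of the HFPSS, hence an equivalence after applying $F(EG_+, -)$, and cofreeness of source and target identifies $F(EG_+, -) \simeq_G \id$ on both. Thus the $R$-linear map in question is a $G$-equivalence, proving that $u$ is an $R$-orientation of $V$.
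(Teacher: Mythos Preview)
The paper does not supply its own proof of this proposition; it is quoted directly from \cite[Proposition~3.10]{bbhs} without argument, so there is nothing in the present paper to compare against. Your proof is correct and follows the expected line: the filtration-zero edge homomorphism of the HFPSS is precisely restriction $\pi_{|V|}^G\Sigma^V R \to (\pi_{|V|}^e\Sigma^V R)^G$, a permanent cycle in filtration zero cannot be a boundary and therefore lifts along the (surjective) edge map, and cofreeness of $\Sigma^{|V|}R$ and $\Sigma^V R$ upgrades the resulting underlying equivalence to a genuine $G$-equivalence via \cref{equivalenceobservation}. This is the standard argument and is in the spirit of the surrounding discussion in the paper (cf.\ \cref{cofreeRorientations}).
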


The Lubin-Tate spectra that we are interested in are cofree and the 
Galois condition is satisfied by \cref{galoiscor}. 
So we may use \cref{orientationprop} directly to study them. 
However, we are still missing a crucial ingredient. 
We wish make use of the techniques of HHR and the known slice spectral
sequence computations for norms of Real bordism and their quotients. 
It is therefore helpful to define certain classes at the level of the 
slice spectral sequence that play the role of pseudo-$R$-orientations.

Let us continue with $V$ a classically orientable $G$-representation. 
In this case, the restriction map gives an isomorphism 
\[res^G_e:\pi_{|V|}^G\Sigma^V H\underline{\Z}\xrightarrow{\cong}
\pi_{|V|}^e\Sigma^V H\underline{\Z}\cong\Z,\] where a choice of orientation
is equivalent to an isomorphism of the target with $\Z$. 

\begin{defn}
\textnormal{For a classically oriented $V$, we define the \textit{orientation class} of $V$, denoted
$u_V\in\pi^G_{|V|}\Sigma^V H\underline{\Z}$, to be the 
preimage of $1$ under the restriction isomorphism above. }
\end{defn}

\noindent It turns out that the orientation classes become invertible after cofree 
localization.

\begin{prop}\label{uvinvert}
The image of $u_V$ under the cofree localization map 
\[H\underline{\Z}\to H\underline{\Z}^h\] is a unit. 
\end{prop}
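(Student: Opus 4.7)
The plan is to show that multiplication by $u_V$ realizes a $V$-periodicity of $H\underline{\Z}^h$, after which the reverse direction of \cref{equivalenceobservation} will immediately identify the image of $u_V$ as a unit in $\pi^G_\star H\underline{\Z}^h$.

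First I would form the $H\underline{\Z}$-linear candidate multiplication map
$$\tilde u_V: \Sigma^{|V|} H\underline{\Z} \xrightarrow{u_V \smsh \eta} \Sigma^V H\underline{\Z} \smsh H\underline{\Z} \xrightarrow{1 \smsh \mu} \Sigma^V H\underline{\Z}$$
exactly as in the statement of \cref{equivalenceobservation}. The defining property of $u_V$ is that $\res^G_e u_V$ generates $\pi^e_{|V|} \Sigma^V H\underline{\Z} \cong \Z$, so under the chosen trivialization $i^*_e S^V \simeq S^{|V|}$ the underlying map of $\tilde u_V$ is just multiplication by $1 \in \pi_0 H\Z$. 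In particular $\tilde u_V$ is an underlying weak equivalence of $H\underline{\Z}$-modules.

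Next I would apply the cofree functor $(-)^h = F(EG_+,-)$. Because $EG$ is nonequivariantly contractible this functor preserves underlying weak equivalences, and a morphism between cofree $G$-spectra is a $G$-equivalence precisely when its underlying map is an equivalence. Therefore $\tilde u_V^h$ gives a $G$-equivalence of $H\underline{\Z}^h$-module $G$-spectra
$$\Sigma^{|V|} H\underline{\Z}^h \xrightarrow{\simeq_G} \Sigma^V H\underline{\Z}^h.$$
Since $\tilde u_V^h$ is literally the composite appearing in \cref{equivalenceobservation} for the ring $R = H\underline{\Z}^h$ with the image of $u_V$ playing the role of $\hat u$, this composite being an $H\underline{\Z}^h$-module equivalence forces the image of $u_V$ to be invertible in the $\mathrm{RO}(G)$-graded ring $\pi^G_\star H\underline{\Z}^h$.

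The only delicate point is the step converting an underlying equivalence into a genuine $G$-equivalence, which is precisely what the cofree functor buys us and is the reason the unlocalized class $u_V \in \pi^G_\star H\underline{\Z}$ itself need not be a unit. Everything else is formal: the algebraic content is simply that $u_V$ was built to restrict to a generator nonequivariantly, and cofree $G$-spectra are detected by their underlying homotopy type.
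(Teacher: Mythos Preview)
Your argument is correct and is a genuinely different route from the paper's. You exploit the formal fact that the cofree functor $F(EG_+,-)$ turns underlying equivalences into genuine $G$-equivalences, and since $u_V$ was defined precisely so that $\res^G_e u_V$ is a generator, multiplication by $u_V$ is an underlying equivalence of $H\underline{\Z}$-modules; cofree localization then upgrades this to a $G$-equivalence of $H\underline{\Z}^h$-modules, and a module self-equivalence of a rank-one free module is multiplication by a unit. The only quibble is that \cref{equivalenceobservation} as stated does not literally say the class $\hat u$ is invertible---it says the composite is an equivalence---but the missing step (an $R$-module equivalence $\Sigma^{|V|}R\to\Sigma^V R$ is multiplication by a unit) is standard and is carried out explicitly in the proof of the proposition immediately following \cref{equivalenceobservation}.

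The paper instead argues geometrically: it identifies $\pi^G_{|V|-V}H\underline{\Z}^h$ with the Borel homology group $H\Z_{|V|}(EG_+\smsh_G S^{-V})$, recognizes this as the homology of the Thom spectrum of the bundle $EG\times_G(-V)\to BG$, and observes that the image of $u_V$ is the Thom class of this bundle, with inverse given by the Thom class of $EG\times_G V$. Your approach is cleaner and more self-contained; the paper's has the advantage of making the connection with classical orientability of vector bundles over $BG$ explicit, which is the content of the remark that follows.
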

\begin{proof}
The homotopy groups $\pi^G_{|V|}\Sigma^V H\underline{\Z}^h$ 
are given by the Borel homology groups 
\[H\Z_{|V|}(EG_+\smsh_G S^{-V}).\] We can identify $EG_+\smsh_G S^V$ with 
the Thom spectrum of $EG \times_G -V\to BG$ and the image of $u_V$ is 
the Thom class of this vector bundle. The Thom class of $EG\times_G V\to BG$
is then an inverse to $u_V$. 
\end{proof}

\begin{rmk}
More generally, the argument above shows that if we view an ordinary spectrum $E$ as 
a cofree $G$-spectrum with trivial action, then the notion of $E$-orientability of a 
$G$-representation $V$ is equivalent to the usual notion of orientability for the 
corresponding vector bundle over the classifying space $BG$. 
\end{rmk}

Now let us get back to our commutative $G$-ring $R$. From \cite[Corollary~4.54]{hhr16}
we know that the $0$-slice of the sphere is 
\[P^0_0 S^0\simeq H\underline{\Z}.\] Thus for any classically oriented $V$
the unit map $S^0\to R$ gives rise to classes
\[u_V\in\pi_{|V|-V}^G P^0_0 R\] 
in filtration zero of the $E_2$-page of $\textrm{SliceSS}(R)$.
We will not notationally or terminologically distinguish the $u_V$ classes 
with their images in various rings $R$. 
The cofree localization map gives rise to a morphism of spectral sequences 
\[\textrm{SliceSS}(R)\to\textrm{SliceSS}(R^h) = \mathrm{HFPSS}(R).\]
The following is then an immediate corollary of \cref{uvinvert} and 
\cref{orientationprop}

\begin{prop}\label{uvorientability}
Let $R$ be a cofree $G$-ring spectrum with $R^{hG}\to R$ a faithful 
$G$-Galois extension. Let $V$ be a classically orientable $G$-representation. 
If the class $u_V$ in $\mathrm{HFPSS}(R)$ is a permanent cycle, then $V$ is $R$-orientable. 
\end{prop}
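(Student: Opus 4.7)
The plan is to apply \cref{orientationprop} directly by producing a pseudo-$R$-orientation of $V$ from the class $u_V$ and then exploiting the assumed permanent-cycle hypothesis. To that end, I would begin by carefully identifying where $u_V$ lives on the $E_2$-page of $\hfpss(R)$. The cofree localization map $R\to R^h$ induces a morphism of spectral sequences $\slicess(R)\to\hfpss(R)$ and sends the orientation class $u_V\in\pi^G_{|V|-V}P^0_0 R$ to a class in filtration zero of $\hfpss(R)$. Since $P^0_0 R$ is an Eilenberg--MacLane $G$-spectrum for the underlying $G$-module $\pi_0^e R$, the filtration-zero line of the $E_2$-page of $\hfpss(R)$ in the appropriate bidegree is canonically identified with
\[
H^0(G,\pi_{|V|}^e\Sigma^V R)\cong (\pi_{|V|}^e \Sigma^V R)^G,
\]
so $u_V$ determines a class in this invariant group.

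Next I would check that $u_V$ is in fact a pseudo-$R$-orientation. By construction, $u_V$ arises from the unit map $\sphere\to R$ by naturality of the slice filtration applied to the orientation class in $\pi^G_{|V|-V}P^0_0\sphere$ coming from $H\underline{\Z}$. That canonical $H\underline{\Z}$-class restricts under $\res^G_e$ to $1\in\pi_{|V|}^e\Sigma^V H\underline{\Z}\cong\Z$, and hence, after the identification $\pi_{|V|}^e\Sigma^V R\cong\pi_0^e R$, the restriction of $u_V$ is precisely the unit $1\in\pi_0^e R$, which is plainly invertible. Thus $u_V$ satisfies the definition of a pseudo-$R$-orientation.

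With the pseudo-$R$-orientation in hand and the assumption that $u_V$ is a permanent cycle in $\hfpss(R)$, the conclusion is an immediate application of \cref{orientationprop} (which is where the faithful Galois and cofreeness hypotheses are consumed, and where we also use \cref{uvinvert} to know that the image of $u_V$ in $H\underline{\Z}^h$ is a unit, so that the permanent-cycle assumption really does detect an element with the correct invertibility on the $E_\infty$-page).

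The only genuine obstacle is bookkeeping: one must pin down precisely which spot on the $E_2$-page the class $u_V$ occupies in the somewhat nonstandard indexing convention set up in the introduction, and confirm that the filtration-zero line is computed by the $G$-fixed points of $\pi_\star^e R$ rather than a shifted group cohomology. Once this identification is made explicit (via the fact that $0$-slices are Eilenberg--MacLane $G$-spectra and that the HFPSS is the slice spectral sequence of the cofree replacement), the remainder of the argument is formal and reduces to quoting \cref{orientationprop}.
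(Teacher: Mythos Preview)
Your proposal is correct and takes essentially the same approach as the paper, which treats the proposition as an immediate corollary of \cref{uvinvert} and \cref{orientationprop}; you have simply unpacked the details of why $u_V$ lands in $H^0(G,\pi_{|V|}^e\Sigma^V R)$ as a pseudo-$R$-orientation. One small redundancy: your direct check that $u_V$ restricts to the unit $1\in\pi_0^e R$ already secures the pseudo-$R$-orientation property, so the parenthetical appeal to \cref{uvinvert} for ``invertibility on the $E_\infty$-page'' is superfluous---once a pseudo-$R$-orientation is a permanent cycle, \cref{orientationprop} finishes the argument with no further input.
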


\begin{rmk}\label{eulerclasses}
We note that the inclusion $S^0\to S^V$ of the fixed points for any representation 
$V$ of $G$  gives rise to a class $a_V\in\pi_{-V}S^0$, and thus to a class in filtration $s = |V|$ of any 
of the above spectral sequences for any ring spectrum. These are called \emph{Euler classes}. By definition, 
these must be permanent cycles in the slice spectral sequence of the sphere, 
hence too in the slice spectral sequence for any ring spectrum. 
For more on these classes, see \cite{hhr17}.
\end{rmk}

\subsection{Orientation classes for Lubin-Tate spectra}\label{uvsLT}
The takeaway from the preceding section is that we have orientation classes $u_V$ in filtration zero 
of the slice spectral sequence for $\BPCn$, which map to orientation classes in the 
homotopy fixed point spectral sequence of $E^{\R}_h$ via the
$C_{2^n}$-orientation from \cref{realorientationmap}. 
The orientation classes in the spectral sequence for $\BPCn$ come from pushing 
forward those in that of $\MUCn$ along the quotinent map killing the generators sent to 
zero in the Quillen idempotent. We similarly define orientation classes in the slice spectral 
sequence of any quotient or localization of $\BPCn$. 

\begin{prop}
    \label{oneuv}
    Let $V$ be a classically orientable $C_{2^n}$-representation and let 
    $E_h$ be a good Lubin-Tate theory at height $h = 2^{n-1}m$. 
    If $u_V$ is a permanent cycle in 
    $\mathrm{HFPSS}(\ECnm)$ then $E_h$ is $(|V|-V)$-periodic: 
    \[\Sigma^{|V|-V}E_h\simeq_{C_{2^n}}E_h\]
\end{prop}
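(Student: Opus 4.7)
The plan is to transport the permanent cycle hypothesis across the $C_{2^n}$-orientation map from $\ECnm$ to $E_h$, and then invoke \cref{uvorientability}.

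First, recall from \cref{factorization} that the $C_{2^n}$-orientation of $E_h$ factors as a $C_{2^n}$-equivariant map
\[
\phi: \ECnm \to E_h.
\]
Since $\phi$ is a map of $C_{2^n}$-equivariant (homotopy) commutative rings, it induces a morphism of slice towers on the target and hence a morphism of homotopy fixed point spectral sequences $\hfpss(\ECnm) \to \hfpss(E_h)$. The orientation classes $u_V$ in each HFPSS are, by construction, the images of the canonical generator of $\pi^{C_{2^n}}_{|V|-V}P^0_0 S^0 \cong \pi^{C_{2^n}}_{|V|-V}H\underline{\Z}$ under the unit map of the respective ring spectrum. Because $\phi$ is a ring map, the $u_V$ class in $\hfpss(\ECnm)$ maps to the $u_V$ class in $\hfpss(E_h)$. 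Consequently, if $u_V$ is a permanent cycle in the source spectral sequence, its image $u_V$ in $\hfpss(E_h)$ is also a permanent cycle.

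Next, I would check that $E_h$ satisfies the hypotheses of \cref{uvorientability}. By our conventions, $E_h$, viewed as a spectrum with $C_{2^n}$-action promoted to a genuine $C_{2^n}$-spectrum via the cofree extension of universe, is cofree. Since $E_h$ is a good Lubin-Tate spectrum, \cref{galoiscor} implies that $L_{\mK(h)}\sphere \to E_h$ is a $\mK(h)$-local pro-$\estab_h$-Galois extension; restricting along the finite subgroup $C_{2^n} \leq \mathcal{O}^\times_h \leq \estab_h$ and taking fixed points shows that $E_h^{hC_{2^n}} \to E_h$ is a faithful $C_{2^n}$-Galois extension. Thus, \cref{uvorientability} applies and yields that $V$ is $E_h$-orientable.

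Finally, by the proposition preceding \cref{cofreeRorientations} (namely, that $R$ is $(|V|-V)$-periodic if and only if $V$ is $R$-orientable), $E_h$-orientability of $V$ produces a $C_{2^n}$-equivariant equivalence of $E_h$-modules
\[
\Sigma^{|V|-V}E_h \simeq_{C_{2^n}} E_h,
\]
which is the desired periodicity.

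The main technical point to verify carefully is the naturality of the $u_V$ classes along the $C_{2^n}$-orientation: one must confirm that $u_V$ is genuinely defined via the unit map through the zero slice and that this construction is preserved under ring maps between $C_{2^n}$-spectra whose zero slices receive a natural map from $H\underline{\Z}$. This is not difficult, but it is the only place in the argument that leaves the formal shelter of the previously established propositions.
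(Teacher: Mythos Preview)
There is a genuine gap in your argument. The factorization of \cref{factorization} does \emph{not} produce a map $\ECnm \to E_h$ for an arbitrary good Lubin-Tate spectrum $E_h$; it only produces a map $\ECnm \to E^{\R}_h$, where $E^{\R}_h$ is the Real bordism Lubin-Tate spectrum associated to $(\overline{\F}_2,\Gamma_h)$ (\cref{rblt}). The spectrum $E^{\R}_h$ is not residually finite, hence not good, and indeed by \cref{funnyprop} for $n>2$ the formal group law $\Gamma_h$ has all automorphisms over no finite field, so there is no good $E_h$ receiving the factorization directly. Your first displayed map $\phi:\ECnm\to E_h$ is therefore not supplied by \cref{factorization}, and the rest of the argument collapses.

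The paper closes this gap by a Galois descent argument. One first pushes $u_V$ to a permanent cycle in $\hfpss(E^{\R}_h)\cong \hfpss(E_{(\overline{\F}_2,\Gamma)})$, where $(k,\Gamma)$ is the pair underlying the given good $E_h$. One then considers the map $E_h\to E_{(\overline{\F}_2,\Gamma)}$, which goes the \emph{wrong way} to simply push forward a permanent cycle. Instead, one shows that the induced map of $C_{2^n}$-HFPSS's is injective on every page: on $E_2$ this is the identification $H^\ast(C_{2^n};\pi_\star E_{(\overline{\F}_2,\Gamma)}) \cong H^\ast(C_{2^n};\pi_\star E_h)\otimes_{W(k)}W(\overline{\F}_2)$ via Galois descent, and then one checks inductively that injectivity persists because the differentials are $W(\overline{\F}_2)$-linear and the image is cut out by $\Gal(\overline{\F}_2/k)$-invariance. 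This injectivity forces $u_V$ to be a permanent cycle in $\hfpss(E_h)$, after which \cref{uvorientability} applies as you indicated.
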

\begin{proof}
\cref{factorization} provides a morphism of spectral sequences 
\[C_{2^n}\textrm{-}\mathrm{HFPSS}(\ECnm)\to C_{2^n}\textrm{-}\mathrm{HFPSS}(E^{\R}_h).\]
Since permanent cycles map to permanent cycles, $u_V$ is a permanent cycle in the target spectral 
sequence.

Let $(k,\Gamma)$ be the pair associated to $E_h$. 
After base change to $\overline{\F}_2$, $\Gamma$ will be isomorphic 
to $\Gamma_h$, and so we may choose an equivalence
\[E^{\R}_h\xrightarrow{\sim} E_{(\overline{\F}_2,\Gamma)}\] 
which is $C_{2^n}$-equivariant with respect to an appropriate choice of embedding of $C_{2^n}$ in 
$\Aut_{\overline{\F}_2}(\Gamma)\leq \estab(\overline{\F}_2,\Gamma)$. We then have an isomorphism of spectral sequences 
\begin{equation*}
%\label{ssmapLTs}
    C_{2^n}\textrm{-}\mathrm{HFPSS}(E^{\R}_h)\xrightarrow{\cong}C_{2^n}\textrm{-}\mathrm{HFPSS}(E_{(\overline{\F}_2,\Gamma)}).
\end{equation*}
Thus the orienation class $u_V$ is a permanent cycle in the target spectral sequence.

Now, we also have a morphism $E_h\to E_{(\overline{\F}_2,\Gamma)}$ inducing a morphism of spectral sequences 
\begin{equation*}\tag{4.1}
    \label{ssmapLTs}
        C_{2^n}\textrm{-}\mathrm{HFPSS}(E_h)\xrightarrow{\cong}C_{2^n}\textrm{-}\mathrm{HFPSS}(E_{(\overline{\F}_2,\Gamma)}),
\end{equation*}
preserving orientation classes. Thus, if $u_V$ supports a differential in the source of \cref{ssmapLTs} then 
the tartget of that differential must map to zero.
By Galois descent, the $E_2$-page of the target of \cref{ssmapLTs} given by 
\begin{equation*}\tag{4.2}
\label{ssGalcl}
H^\ast(C_{2^n};\pi_\star E_{(\overline{\F}_2,\Gamma)})\cong H^\ast(C_{2^n};\pi_\star E_{(\overline{\F}_2,\Gamma)})^{\Gal}\ox_{W(k)}W(\overline{\F}_2),
\end{equation*}
where $\Gal = \Gal(\overline{\F}_2/k)$. Since $\Gamma$ has all of its automorphisms over $k$ by assumption, 
we can extend the $C_{2^n}$ to a subgroup $G = C_{2^n}\times\Gal\leq\estab(\overline{\F}_2,\Gamma)$. 
The lefthand tensor factor in \cref{ssGalcl} can be identified as the $E_2$-page of the spectral sequence
\[H^\ast(G;\pi_\star E_{(\overline{\F}_2,\Gamma)})\implies \pi_{\star}E_{(\overline{\F}_2,\Gamma)}^{hC_{2^n}\times\Gal}
\cong \pi_{\star}E_h^{hC_{2^n}},\] which is isomorphic to the homotopy fixed point spectral sequence 
for $E_h$ by identifying $E_h\simeq E_{(\overline{\F}_2,\Gamma)}^{h\Gal}$. One can then verify that the 
morphism from \cref{ssmapLTs} on the $E_2$-page, sends $x\mapsto x\otimes 1$ and is therefore injective, 
implying that $u_V$ must be a $d_2$-cycle in the source of \cref{ssmapLTs}. 
Furthermore, the differentials in the target of \cref{ssmapLTs} are $W(\overline{\F}_2)$-linear 
differentials extended from those in the source. .

Now suppose that \cref{ssmapLTs} is injective on the $E_r$ page and that $x$ maps to zero in 
the $E_{r+1}$-page. This means that the image of $x$ must be hit by a $d_r$-differential. 
By $W(\overline{\F}_2)$-linearity of the differentials, we have that the image of $x$ can be 
written as a finite $W(\overline{\F}_2)$-linear combination of differentials from the source of 
\cref{ssmapLTs}. However, the image of $x$ must also be $\Gal$-invariant, forcing the linear 
combination to be trivial. Thus, $x$ itself must be the target of a $d_r$-differential, implying that 
\cref{ssmapLTs} is injective on $E_{r+1}$. 
Thus $u_V$ will be a $d_r$ cycle for any $r\geq 0$ in the $C_{2^n}\text{-}\mathrm{HFPSS}(E_h)$.
\end{proof}

Thus, our strategy for determining the self-duality of higher real $K$-theories associated 
to good Lubin-Tate spectra, is to show that certain 
orientation classes are permanent cycles in the $C_{2^n}\text{-}\mathrm{HFPSS}(E_{C_{2^n}}(m))$ 
or some more initial spectral sequence to deduce periodicities via \cref{oneuv}.

%\begin{rmk}
%We end by remarking that it should also be possible to avoid going through the Real bordism models, 
%$E^\R_h$, all together by appropriately adjusting the choice of generators in $\pi_{\ast\rho_2}\BPCn$ 
%according to the choice of Lubin-Tate spectrum. The author thanks Mike Hill for pointing this out. 
%\end{rmk}

% !TEX root = master.tex

\subsection{A repository of periodicities for Lubin-Tate spectra}\label{computations}
We start with an important periodicity present in any Lubin-Tate spectrum which does not come 
from an orientation class. 

\begin{prop}\label{rhoperiodicity}
Let $E_h$ be a Lubin-Tate spectrum, and $G\leq\estab_h$ a finite subgroup containing $C_2$. 
Then $E_h$ is $\rho_G$-periodic. 
\end{prop}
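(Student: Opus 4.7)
The plan is to construct a unit in $\pi_{\rho_G}^G E_h$ and then invoke \cref{equivalenceobservation} to conclude that $\Sigma^{\rho_G} E_h \simeq_G E_h$.

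First, I would invoke the Hahn-Shi Real orientation theorem \cite{realorientation}, which provides a $C_2$-equivariant ring map $\mur \to E_h$. Pulling back the standard Real-orientation class produces an element $\bar{u} \in \pi_{\rho_2}^{C_2} E_h$ whose underlying image in $\pi_2 E_h$ is a unit. Since $E_h$ is strongly even as a $C_2$-spectrum, the restriction $\pi_{\rho_2}^{C_2} E_h \to \pi_2 E_h$ is an isomorphism, and hence $\bar{u}$ is itself a unit in the $\mathrm{RO}(C_2)$-graded ring $\pi_\star^{C_2} E_h$. This already gives the $\rho_2$-periodicity of $E_h$ as a $C_2$-spectrum.

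Next, I would use the identification $\rho_G \cong \mathrm{Ind}_{C_2}^G \rho_2$ of real $G$-representations, which follows from the standard isomorphism $\mathrm{Ind}_{C_2}^G \R[C_2] \cong \R[G]$ of regular representations. Correspondingly, there is a $G$-equivariant equivalence of representation spheres $S^{\rho_G} \simeq_G N_{C_2}^G S^{\rho_2}$. Treating $E_h$ as a genuine $G$-equivariant commutative ring spectrum, the multiplicative norm $N_{C_2}^G \colon \pi_{\rho_2}^{C_2} E_h \to \pi_{\rho_G}^G E_h$ is then defined and sends units to units. Applying it to $\bar{u}$ yields a unit $N_{C_2}^G(\bar{u}) \in \pi_{\rho_G}^G E_h$, whence \cref{equivalenceobservation} produces the desired $G$-equivariant equivalence $\Sigma^{\rho_G} E_h \simeq_G E_h$.

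The main technical obstacle is to justify that $E_h$ carries a genuine $G$-equivariant commutative ring structure compatible with the $G$-action inherited from $\estab_h$, so that the multiplicative norm is available on $\mathrm{RO}(G)$-graded homotopy. This is standard background, following from the Goerss-Hopkins-Miller theorem together with the equivariant enhancements developed in \cite{lubintatemodels} and related work; for good Lubin-Tate spectra the existence can alternatively be transferred from the Real bordism Lubin-Tate models via the $C_{2^n}$-orientation of \cref{realorientationmap} and a change-of-fields argument in the spirit of \cref{oneuv}.
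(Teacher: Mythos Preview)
Your proposal is correct and follows essentially the same route as the paper's proof: obtain a unit $\bar{u}\in\pi_{\rho_2}^{C_2}E_h$ from the Hahn--Shi Real orientation, then apply the multiplicative norm $N_{C_2}^G$ to produce a unit in $\pi_{\rho_G}^G E_h$, which gives the $\rho_G$-periodicity. Your version is more detailed (the strong-evenness justification for $\bar{u}$ being a unit, the identification $S^{\rho_G}\simeq N_{C_2}^G S^{\rho_2}$) and correctly flags the need for a genuine $G$-equivariant commutative ring structure on $E_h$, a point the paper's terse proof leaves implicit.
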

\begin{proof}
In \cite{realorientation}, it is shown that a unit $u\in\pi_2 E_h$ has a $C_2$-equivariant lift 
to a unit in $\pi_{\rho_2}^{C_2} E_h$. The norm of this lift gives a unit in $\pi_{\rho_G}^G E_h$ 
which induces a $\rho_G$-periodicity by multiplication. 
\end{proof}

The next periodicity follows from the arguments in 
Section 9 of \cite{hhr16}. We include a short proof for completeness.

\begin{prop}\label{sigpc}
The class $u_{2\sigma}^{2^m}$ is a permanent cycle in the 
$C_{2^n}$-slice spectral sequence of $N(\tgen_m)^{-1}\BPCn$ . 
\end{prop}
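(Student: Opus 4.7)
The plan is to apply the Hill--Hopkins--Ravenel slice differential theorem to identify all possible differentials that $u_{2\sigma}^{2^m}$ can support in $\slicess(\BPCn)$, and then to show that each of them is killed after inverting $N(\tgen_m)$.

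First, I would recall the fundamental slice differentials on the orientation classes $u_{2\sigma}^k$ in $\slicess(\BPCn)$. These are determined by HHR's slice differential theorem (and its extension to $C_{2^n}$ by subsequent work), whose main output is a family of formulas of the shape
\[
    d_{2^{i+1}-1}\!\left(u_{2\sigma}^{2^{i}-1}\right) \;=\; a_\sigma^{2^{i+1}-1}\, N(\tgen_i),
\]
valid for each $i\geq 1$ (up to units and higher filtration corrections, and with appropriate care about Leibniz contributions from normed generators). Together with the Leibniz rule, these formulas determine every differential on any monomial in the $u_{2\sigma}$'s.

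Second, I would pass to the localization $\slicess(N(\tgen_m)^{-1}\BPCn)$, which is obtained from $\slicess(\BPCn)$ by inverting the permanent cycle $N(\tgen_m)$ on each page. Writing $u_{2\sigma}^{2^m}=u_{2\sigma}\cdot u_{2\sigma}^{2^{m}-1}$ and applying the Leibniz rule, the leading contribution to $d_r(u_{2\sigma}^{2^m})$ is a monomial in $u_{2\sigma}$, $a_\sigma$ and $N(\tgen_m)$. Since $N(\tgen_m)$ has become a unit after localization, this leading contribution can be written as $N(\tgen_m)$ times a class whose slice filtration is strictly lower than the filtration of the target of $d_r$. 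Because $d_r$ strictly increases slice filtration by a fixed amount, this forces the leading term to vanish.

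The main obstacle is the careful accounting for all the subleading Leibniz contributions and the correction terms in the HHR differentials, and verifying that they likewise vanish in the localized SS. I would handle this by induction on the slice filtration of the potential targets: once the leading differential is trivialized, higher corrections must involve strictly more $a_\sigma$'s times permanent cycles whose behavior is controlled by the already-verified formulas, and an analogous filtration argument rules them out. Putting these steps together shows that $u_{2\sigma}^{2^m}$ is a permanent cycle in $\slicess(N(\tgen_m)^{-1}\BPCn)$.
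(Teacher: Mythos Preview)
Your argument has a genuine gap. The slice differential theorem tells you that the first nontrivial differential supported by $u_{2\sigma}^{2^{m}}$ lands on a class of the form $a_\sigma^{\,r}\,N(\tgen_{m+1})$ (up to units and lower-order terms), not on a multiple of $N(\tgen_m)$. Inverting $N(\tgen_m)$ does not by itself annihilate $N(\tgen_{m+1})$; one must argue that the target class has already been killed on an earlier page of the localized spectral sequence, and your ``filtration'' sentence does not accomplish this. Indeed, the claim that ``being a unit lets one write the target as $N(\tgen_m)$ times something of strictly lower filtration, hence zero'' is not valid: multiplying by a unit does not change whether a class is zero, and slice filtration is not decreased by multiplication by $N(\tgen_m)^{-1}$ in any sense that forces vanishing.

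What actually makes this work is the HHR \emph{periodicity theorem} (the $2$-typical case of Corollary~9.13 of \cite{hhr16}), which asserts directly that the product $N(\tgen_m)\,u_{2\sigma}^{2^m}$ is a permanent cycle in $\slicess(\BPCn)$. That result is proved by a geometric-fixed-points argument rather than by chasing Leibniz corrections, and it is exactly the input the paper uses: push $N(\tgen_m)\,u_{2\sigma}^{2^m}$ forward along the localization map to $\slicess(N(\tgen_m)^{-1}\BPCn)$, then divide by the now-invertible permanent cycle $N(\tgen_m)$. Your proposal essentially attempts to re-derive this periodicity statement from the slice differentials alone, but the inductive scheme you outline does not close without that extra input.
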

\begin{proof}
From the $2$-typical case of Corollary 9.13 in \cite{hhr16} we have that 
$N(\tgen_m)u_{2\sigma}^{2^m}$ is a permanent cycle in 
$C_4\textrm{-}\mathrm{SliceSS}(\BPCfour)$. This class must then map to a permanent
cycle under the map of spectral sequences induced by the localization 
\[C_4\textrm{-}\mathrm{SliceSS}(\BPCn)\to C_4\textrm{-}\mathrm{SliceSS}(N(\tgen_m)^{-1}
\BPCn).\] The result then follows from the fact that $N(\tgen_m)$ 
is an invertible permanent cycle in the target spectral sequence.
\end{proof}

\begin{cor}\label{sigp}
Let $E$ be a good Lubin-Tate spectrum at height $h=2^{n-1}m$. 
Then for $0\leq r \leq n$, and $\sigma_{2^r}$ the sign representation of $C_{2^r}$, 
the restriction $i^\ast_{C_{2^r}}E$ has a 
$(2^{2^{n-r}m+1}-2^{2^{n-r}m+1}\sigma_{2^r})$-periodicity:
\[\Sigma^{2^{2^{n-r}m+1}\sigma_{2^r}} i^\ast_{C_{2^r}}E_h \simeq 
\Sigma^{2^{2^{n-r}m+1}}i^\ast_{C_{2^r}}E_h.\]
\end{cor}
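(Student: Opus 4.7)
The plan is to apply the argument of Proposition \ref{oneuv} to each subgroup $C_{2^r}\leq C_{2^n}$ for $1\leq r\leq n$; the case $r=0$ is tautological since $\sigma_1$ is trivial and both suspensions in the statement agree. The key observation is that the height $h = 2^{n-1}m$ may be rewritten as $h = 2^{r-1}m'$ with $m' = 2^{n-r}m$, so that from the $C_{2^r}$-equivariant perspective, $i^\ast_{C_{2^r}}E_h$ fits into the framework of Section 3 with ``height parameter'' $m'$ in place of $m$ and group $C_{2^r}$ in place of $C_{2^n}$.

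First I would verify that the restricted $C_{2^r}$-action on $\Gamma_h$ (inherited from the $C_{2^n}$-action generated by (\ref{fglaction})) produces the same conjugacy class of $C_{2^r}$ inside $\Aut(\Gamma_h)$ as the action arising from the $m'$-analog of (\ref{fglaction}). By Goerss--Hopkins--Miller this yields a $C_{2^r}$-equivariant equivalence between $i^\ast_{C_{2^r}}E^{\R}_h$ and the $C_{2^r}$-equivariant Real bordism Lubin--Tate spectrum built directly from $(C_{2^r}, m')$. With this identification in hand, the proof of Theorem \ref{factorization} carries over to give a $C_{2^r}$-equivariant factorization of the orientation map through $E_{C_{2^r}}(m') = D_h^{-1}\mathrm{BP}^{((C_{2^r}))}\langle m'\rangle$.

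Next, Proposition \ref{sigpc} applied to the pair $(C_{2^r}, m')$ shows that $u_{2\sigma_{2^r}}^{2^{m'}}$ is a permanent cycle in the $C_{2^r}$-slice spectral sequence of $N(\tgen_{m'})^{-1}\mathrm{BP}^{((C_{2^r}))}$. Pushing forward along the quotient to $E_{C_{2^r}}(m')$ and then along the natural map $\slicess\to\hfpss$, this class remains a permanent cycle in $C_{2^r}\text{-}\hfpss(E_{C_{2^r}}(m'))$. The $C_{2^r}$-analog of Proposition \ref{oneuv} then produces the periodicity $\Sigma^{|V|-V} i^\ast_{C_{2^r}}E_h \simeq_{C_{2^r}} i^\ast_{C_{2^r}}E_h$ for $V = 2^{m'+1}\sigma_{2^r}$; unpacking gives $|V|-V = 2^{2^{n-r}m+1} - 2^{2^{n-r}m+1}\sigma_{2^r}$, which is precisely the desired equivalence.

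The main obstacle is the compatibility check in the second paragraph: namely, verifying that $i^\ast_{C_{2^r}}E^{\R}_h$ coincides, up to $C_{2^r}$-equivariant equivalence, with the construction of Definition \ref{rblt} applied directly with parameter $m'$. Once this is established the remainder of the argument is a mechanical transfer of the results developed in this section from $C_{2^n}$ to $C_{2^r}$.
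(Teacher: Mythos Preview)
Your approach is correct in outline, but it takes a detour that the paper avoids. The compatibility check you flag as the ``main obstacle''---that $i^\ast_{C_{2^r}}E^{\R}_h$ agrees $C_{2^r}$-equivariantly with the Real bordism Lubin--Tate spectrum built from the pair $(C_{2^r},m')$---is genuinely nontrivial (it amounts to comparing two a priori different $C_{2^r}$-subgroups of $\Aut(\Gamma_h)$ up to conjugacy), and the paper simply sidesteps it.

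The paper's argument is shorter: rather than rebuilding the entire Section~3 framework at the $C_{2^r}$-level, it uses the unit of the norm--restriction adjunction to produce a $C_{2^r}$-map $\mathrm{BP}^{((C_{2^r}))}\to i^\ast_{C_{2^r}}\BPCn$, and then composes with the restriction of the already-established $C_{2^n}$-orientation $\BPCn\to E^{\R}_h$. The point is that Proposition~\ref{localizationfactorization} is phrased precisely so that the element $D_h$ it inverts is divisible by $N(\tgen^{C_{2^r}}_{2^{n-r}m})$ for every $1\le r\le n$; hence the composite $\mathrm{BP}^{((C_{2^r}))}\to i^\ast_{C_{2^r}}E_h$ automatically factors through $N(\tgen_{2^{n-r}m})^{-1}\mathrm{BP}^{((C_{2^r}))}$, with no need to compare formal group law actions. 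From there, Proposition~\ref{sigpc} applied with $(C_{2^r},m')$ in place of $(C_{2^n},m)$ gives the permanent cycle, and the periodicity follows exactly as in your last paragraph. What the paper's route buys is that the only new ingredient beyond Propositions~\ref{sigpc} and~\ref{localizationfactorization} is the norm--restriction unit map, which is formal; your route would additionally require the conjugacy verification, and also relies on Theorem~\ref{factorization} (hence on Assumption~\ref{e2rho}), which the paper's proof does not need.
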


\begin{proof}
The unit of the norm-restriction adjuction gives a $C_{2^r}$-map 
\[\mathrm{BP}^{((C_{2^r}))}\to i^\ast_{C_{2^r}}\BPCn.\] By \cref{localizationfactorization},
the composite \[\mathrm{BP}^{((C_{2^r}))}\to i^\ast_{C_{2^r}}\BPCn\to i^\ast_{C_{2^r}}E_h\] 
factors through $N(\tgen_{2^{n-r}m})^{-1}\mathrm{BP}^{((C_{2^r}))}$. 
The result then follows directly from \cref{sigpc}. 
\end{proof}

\begin{defn}
We refer to the periodicity of \cref{sigp} as the \emph{$\sigma_r$-periodicity} of $E_h$. 
\end{defn}

From this result we can also deduce an integer periodicity for $E_h$.
This is due to Hill-Hopkins-Ravenel. 
We include a proof here for completeness, but make no claim of 
originality for this statement or its proof. Furthermore, we note that the proof 
is essentially the same as the proof of Theorem 2.15 in \cite{hishiwax} for the case 
of $h=4$ and $C_4$. Roughly, the idea is to take the $\sigma_r$-periodicites, norm them 
up to the appropriate level, and take a suitable linear combination of them together
with the $\rho$-periodicity of \cref{rhoperiodicity}.

\begin{thm}[Hill-Hopkins-Ravenel]\label{intperiodicity}
Let $h = 2^{n-1}m$ and let \[p_h(C_{2^n}) =2^{h+n+1}.\] 
The spectrum $E_h^{hC_{2^n}}$ is $p_h(C_{2^n})$-periodic.  
Furthermore, letting \[p_h(C_{2^{n-\ell}}) = \frac{p_h(C_{2^n})}{2^{\ell}}\quad \text{for } 0\leq \ell \leq n-1,\] 
we have that $E_h^{hC_{2^{n-\ell}}}$ is $p_h(C_{2^{n-\ell}})$-periodic.
\end{thm}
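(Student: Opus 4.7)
The plan is to assemble several $\mathrm{RO}(C_{2^n})$-graded periodicities of $E_h$ into a single integer periodicity of total dimension $p_h(C_{2^n})$; since a trivial-representation periodicity of a $G$-spectrum passes through the homotopy-fixed-point functor, this will yield the claimed integer periodicity of $E_h^{hC_{2^n}}$.

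First, I would use multiplicative norms to lift each $\sigma_r$-periodicity from \cref{sigp} to a periodicity of $E_h$ as a $C_{2^n}$-spectrum. For each $1 \leq r \leq n$, that corollary provides a unit $v_r \in \pi_{V_{2,r}}^{C_{2^r}}i^\ast_{C_{2^r}}E_h$ with $V_{2,r} = 2^{2^{n-r}m+1}(\sigma_{2^r} - 1)$. Since $E_h$ is a commutative $C_{2^n}$-ring spectrum, composing the multiplicative norm $N_{C_{2^r}}^{C_{2^n}}$ with the counit of the norm-restriction adjunction sends $v_r$ to a unit in $\pi_{\mathrm{Ind}_{C_{2^r}}^{C_{2^n}}V_{2,r}}^{C_{2^n}} E_h$, yielding an $\mathrm{Ind}\, V_{2,r}$-periodicity of $E_h$. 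A direct character computation identifies the induced representations up to the 2-completion equivalence of representation spheres in use throughout the paper: for $1 \leq r \leq n-1$,
\[
\mathrm{Ind}_{C_{2^r}}^{C_{2^n}} \sigma_{2^r} \equiv 2^{n-r-1}\lambda_{r-1},
\qquad
\mathrm{Ind}_{C_{2^r}}^{C_{2^n}} \mathbf{1} = 1 + \sigma + \sum_{i=r}^{n-2} 2^{n-2-i}\lambda_i,
\]
the latter being the pullback of $\rho_{C_{2^{n-r}}}$ along $C_{2^n} \twoheadrightarrow C_{2^{n-r}}$. The case $r = n$ is trivial: $\mathrm{Ind}\, V_{2,n} = V_{2,n}$.

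Combining these with the $\rho_{C_{2^n}}$-periodicity of \cref{rhoperiodicity}, I would seek a $\mathbb{Z}$-linear combination $a\rho_{C_{2^n}} + \sum_{r=1}^n c_r \mathrm{Ind}\, V_{2,r}$ whose coefficients of $\sigma$ and of each $\lambda_i$, $0 \leq i \leq n-2$, vanish. This gives a system of $n$ linear equations in $n+1$ unknowns $(a, c_1, \ldots, c_n)$, leaving a one-parameter family of solutions. Solving inductively starting from the $\lambda_0$-equation gives $c_{i+1}\cdot 2^{2^{n-i-1}m+1} = -2^i a$ for $i = 0, \ldots, n-2$, and the $\sigma$-equation then forces $c_n \cdot 2^{m+1} = -a\cdot 2^{n-1}$. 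The integrality of all $c_r$'s imposes the constraint $v_2(a) \geq 2^{n-r}m + 2 - r$ for $r = 1, \ldots, n-1$, which is tightest at $r = 1$ and gives $v_2(a) \geq h+1$. Taking $a = 2^{h+1}$ produces an integer periodicity of total dimension $a\cdot|\rho_{C_{2^n}}| = 2^{h+n+1} = p_h(C_{2^n})$, and taking $C_{2^n}$-homotopy fixed points completes the proof for $G = C_{2^n}$.

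The intermediate claim for $C_{2^{n-\ell}}$ then follows by running the same argument on $E_h$ viewed as a $C_{2^{n-\ell}}$-spectrum, using $\rho_{C_{2^{n-\ell}}}$-periodicity together with the $\sigma_r$-periodicities for $r \leq n-\ell$ inherited by restriction; the analogous inductive solution again requires $v_2(a) \geq h+1$, producing total dimension $2^{h+1}\cdot 2^{n-\ell} = p_h(C_{2^{n-\ell}})$. The main obstacle in the whole argument is the character-theoretic decomposition of $\mathrm{Ind}_{C_{2^r}}^{C_{2^n}}\sigma_{2^r}$—once this is in hand, the rest is a routine (if slightly delicate) exercise in equivariant linear algebra.
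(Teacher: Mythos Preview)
Your proposal is correct and follows essentially the same route as the paper's proof. Both arguments norm the $\sigma$-periodicities of \cref{sigp} up from each $C_{2^r}$ and combine the results with the $\rho$-periodicity of \cref{rhoperiodicity} to produce an integer periodicity. The only differences are cosmetic: the paper phrases the norming step in terms of the orientation classes $u_V$ and the HHR formula $N_{2^{n-r}}^{2^{n-\ell}}(u_{2\sigma_{2^{n-r}}}^{2^{2^r m}}) = u_{2^{r-\ell-1}\lambda_{n-r-1}}^{2^{2^r m+1}}/u_{2\sigma_{2^{n-\ell}}}^{2^{2^r m}}$, whereas you norm the periodicity units directly and unpack $\mathrm{Ind}_{C_{2^r}}^{C_{2^n}}(\sigma_{2^r}-1)$ by characters; and the paper writes down the explicit linear combination with coefficient $2^{h+1}$ on $\rho$, whereas you recover the same coefficient by solving the triangular system. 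Your character identifications and the resulting constraint $v_2(a)\geq h+1$ agree with the paper's explicit choice $a=2^{h+1}$.
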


\begin{proof}
Let $h = 2^{n-1}$. We prove the case of a general $\ell$ satisfying $0\leq\ell\leq n-1$. 
As a notationally reminder, we will be considering the sign representation of various subgroups 
of $C_{2^{n-\ell}}$ and so we write $\sigma_{2^r}$ for the sign representation of $C_{2^r}$. 
The only rotational representations appearing will be $C_{2^{n-\ell}}$-representations, so 
$\lambda_r$ denotes the $2$-dimensional representation given by rotation by $\frac{2\pi}{2^{n-\ell-r}}$

By \cref{localizationfactorization}, the map $\text{BP}^{((C_{2^{n-\ell}}))}\to E_h$ factors as in the diagram below
\begin{center}
\begin{tikzcd}
\text{BP}^{((C_{2^{n-\ell}}))}\ar[r]\ar[d]& i^\ast_{C_{2^{n-\ell}}}E_h\\
(N(\tgen^{C_2}_h)N(\bar{t}^{C_4}_{\frac{h}{2}})\cdots N(\tgen^{C_{2^{n-\ell}}}_{\frac{h}{2^{n-\ell-1}}}))^{-1}\text{BP}^{((C_{2^{n-\ell}}))}\ar[ur]
\end{tikzcd}.
\end{center}

Consider $r$ such that $\ell < r \leq n-1$. Since $N(\tgen^{C_{2^{n-r}}}_{2^r})$ is inverted, 
$u_{2\sigma_{2^{n-r}}}^{2^{2^r m}}$ is a permanent cycle in the $C_{2^{n-r}}$-HFPSS$(E_h)$
for $0\leq r\leq n-1$. 
Thus, the norm 
\[N_{2^{n-r}}^{2^{n-\ell}}(u_{2\sigma_{2^{n-r}}}^{2^{2^r m}}) = 
\frac{u_{2^{r-\ell-1}\lambda_{n-r-1}}^{2^{2^r m+1}}}{u_{2\sigma_{2^{n-\ell}}}^{2^{2^r m}}}\] is a permanent cycle in the 
$C_{2^{n-\ell}}$-HFPSS$(E_h)$.
It follows that $u_{2^{r-\ell-1}\lambda_{n-r-1}}^{2^{2^r m+1}}$
is a permanent cycle, leading to a $(2^{2^r m+r-\ell+1}-(2^{2^r m+r-\ell})\lambda_{n-r-1})$-periodicity.

Combining these periodicities with the $\rho_{2^{n-\ell}}$-periodicity of \cref{rhoperiodicity} yields
 a periodicity of 
\begin{equation*}
\begin{split}
2^{2^{n-1}m+1}\rho_{2^{n-\ell}} & 
+\sum_{r=\ell}^{n-1}2^{2^r m+\ldots+ 2^{n-2}m}(2^{2^r m +r-\ell+1}-2^{2^r m+r-\ell}\lambda_{n-r-1}) \\
&=2^{2^{n-1}m+1} + \sum_{r=\ell}^{n-1}2^{2^{n-1}m +r-\ell+1}\\
&= 2^{2^{n-1}m+n-\ell+1}. \qedhere
\end{split}
\end{equation*}

\end{proof}

\begin{rmk}
One can verify that at height $h=2$ this is the minimal periodicity. In \cite{picardHLS} it is also 
shown that this gives the minimal periodicity for $E_h^{hC_2}$. In all other cases it 
is unknown to the author whether this is true. 
\end{rmk}

For the next two periodicities we will make use of the Tate spectral sequence. 
In particular, we will need the following relationship between it and the homotopy fixed point 
spectral sequence. See, for example \cite[Theorem~3.6]{vanishinglines}.

\begin{thm} \label{tateisoregion}
For any $X\in\spectra^G$, the morphism 
\[\mathrm{HFPSS}(X)\to\mathrm{TateSS}(X)\] of $\mathrm{RO}(G)$-graded 
spectral sequences is an isomorphism on the $E_2$-page for classes in 
filtration $s > 0$ and a surjection for classes in filtration $s = 0$. 
Furthermore, there is a one-to-one correspondence between differentials 
with source in nonengative filtration.
\end{thm}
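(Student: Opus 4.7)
The plan is to realize both spectral sequences as arising from compatible functors applied to the slice tower of $X$, and to compare them with a third spectral sequence coming from the homotopy orbits. Applying each of the three functors $EG_+\smsh F(EG_+,-)$, $F(EG_+,-)$, and $\widetilde{EG}\smsh F(EG_+,-)$ to the slice tower produces three spectral sequences, which I will call $\hoss(X)$, $\hfpss(X)$, and $\tatess(X)$, converging to $\pi_\star^G X_{hG}$, $\pi_\star X^{hG}$, and $\pi_\star X^{tG}$ respectively. Because the cofiber sequence of functors induced by $EG_+\to S^0\to\widetilde{EG}$ is natural in the slice tower, it yields a compatible cofiber sequence on each slice layer, and hence a long exact sequence relating the three spectral sequences on their $E_1$- and $E_2$-pages in each $\mathrm{RO}(G)$-degree.

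Next, I would identify the $E_2$-page of $\hoss(X)$ with group homology. For a slice $Y=P^{|V|}_{|V|}X$, the Adams isomorphism gives $(EG_+\smsh F(EG_+,Y))^G\simeq F(EG_+,Y)_{hG}$, and a standard homological spectral sequence identifies the resulting $E_2$-term in filtration $s$ and degree $V$ as $H_{-s}(G;\pi_V^e Y)$, after reindexing so that homological degrees sit in non-positive filtration $s$. The key observation is that this $E_2$-page vanishes for $s\geq 1$. The long exact sequence
\[E_2^{s,V}(\hoss)\to E_2^{s,V}(\hfpss)\to E_2^{s,V}(\tatess)\to E_2^{s+1,V}(\hoss)\]
then immediately yields the claimed isomorphism on $E_2$-pages for $s>0$ and the surjection for $s=0$.

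Finally, I would propagate this comparison to every $E_r$-page (isomorphism in $s\geq 1$, surjection in $s=0$) by induction on $r$ via a five-lemma style argument on the long exact sequence after passing to $d_r$-homology. For the differential correspondence, observe that $d_r$ raises filtration by $r\geq 2$, so any differential with source in $s\geq 0$ has target in $s\geq 2$, i.e., in the isomorphism region. Differentials with source in $s\geq 1$ then correspond bijectively under the comparison via the iso on both source and target. For source in $s=0$, any two lifts of a $\tatess$ source to $\hfpss$ differ by a kernel element $k$, and $d_r(k)$ sits in the iso region but maps to zero in $\tatess$; by injectivity of the comparison there, $d_r(k)=0$. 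Hence kernel elements are $d_r$-cycles, making the induced correspondence on differentials well-defined and bijective. The principal subtlety is precisely this $s=0$ well-definedness argument, which I would make rigorous by a diagram chase on the long exact sequence of $E_r$-pages; the rest of the argument is a routine comparison of spectral sequences.
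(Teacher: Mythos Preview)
The paper does not prove this theorem; it cites \cite[Theorem~3.6]{vanishinglines}. Your approach via the homotopy orbit spectral sequence and the cofiber sequence $EG_+\to S^0\to\widetilde{EG}$ is the standard one and is essentially correct, but two steps need more justification than you give.

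First, your identification of the $\hoss$ $E_2$-term with $H_{-s}(G;\pi_{|V|}^e Y)$ for $Y=P^{|V|}_{|V|}X$ presupposes that $i_e^*Y$ has homotopy concentrated in the single degree $|V|$; otherwise the Atiyah--Hirzebruch spectral sequence for $Y_{hG}$ does not collapse to group homology in one internal degree. This concentration is in fact true---the slice cells $G_+\smsh S^k$ have slice dimension $k$, so $Y\geq n$ and $Y\leq n$ together force $\pi_k^e Y=0$ for $k\neq n$---but it must be invoked explicitly. The same fact also yields $E_2^{s,V}(\hfpss)=0$ for $s<0$, which you will want below.

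Second, the ``five-lemma style argument after passing to $d_r$-homology'' does not work as stated: the long exact sequence on $E_2$ coming from a cofiber sequence of towers does not persist as a long exact sequence on $E_r$ for $r>2$. The induction has to be carried out directly, using that $E_r(\hoss)$ vanishes for $s\geq 1$ and $E_r(\hfpss)$ vanishes for $s<0$ on every page, together with your (correct) observation that kernel elements at $s=0$ are $d_r$-cycles. Even then, the Tate spectral sequence can have differentials with source in $s<0$ landing in $s\geq 1$, and these have no $\hfpss$ analogue, so the comparison map need not remain an isomorphism on $E_r$ in positive filtration for large $r$. The theorem only asserts a correspondence of differentials with \emph{source} in $s\geq 0$, and the paper's application only needs that a filtration-zero permanent cycle in $\tatess$ lifts to one in $\hfpss$; your kernel argument delivers precisely this, but you should be explicit that you are not claiming a full isomorphism of $E_r$-pages.
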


\noindent The strategy is then to prove that the $u_V$ in question is a permanent cycle 
in the Tate spectral sequence. Since these classes are in filtration zero, the theorem then 
implies these will be permanent cycles in the HFPSS. What makes computing with 
the Tate spectral sequence more accessbible is the following fact.

\begin{prop}
The Euler class $a_\lambda$ is invertible in the $C_4$-Tate spectral sequence of any $C_4$-spectrum.
\end{prop}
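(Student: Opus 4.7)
The plan is to establish invertibility of $a_\lambda$ first at the level of the Tate construction $X^{tC_4}$ and then descend to the spectral sequence.

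To begin, I would exploit the cofiber sequence $S(\lambda)_+ \to S^0 \xrightarrow{a_\lambda} S^\lambda$ coming from the equivariant CW-decomposition of $S^\lambda$. Smashing with any $C_4$-spectrum $Y$, this shows that multiplication by $a_\lambda$ is a $C_4$-equivalence $Y \to \Sigma^\lambda Y$ if and only if $Y \smsh S(\lambda)_+ \simeq \ast$. Now $S(\lambda)$ is the unit circle in $\lambda$ with the rotation action, and since no nontrivial rotation fixes a point on the circle, $S(\lambda)$ is a free $C_4$-space. Hence $S(\lambda)_+$ admits a CW-decomposition using only cells of the form $C_{4+} \smsh S^n$. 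Smashing $Y$ with such a cell gives $C_{4+} \smsh (i^\ast_e Y) \smsh S^n$, which is contractible whenever $i^\ast_e Y \simeq \ast$. Thus $a_\lambda$ acts invertibly on any $C_4$-spectrum whose underlying spectrum is contractible.

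Next, I would apply this principle to $Y = X^{tC_4} = \widetilde{EC_4} \smsh F(EC_4{}_+, X)$. Because $EC_4$ is nonequivariantly contractible, the map $EC_4{}_+ \to S^0$ is an underlying equivalence, so its cofiber $\widetilde{EC_4}$ satisfies $i^\ast_e \widetilde{EC_4} \simeq \ast$. Consequently $i^\ast_e X^{tC_4} \simeq \ast$, and the previous step yields that $a_\lambda$ acts as a $C_4$-equivalence on the Tate construction.

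To descend to $\tatess(X)$, I would observe that multiplication by $a_\lambda$ defines a self-map of the filtered tower computing it, commuting with the slice tower up to a predictable bidegree shift. Combined with the fact that it induces an isomorphism on the abutment, this forces it to be an isomorphism on every page. A complementary check at $E_2$ is that the integer-graded $E_2$-page for $X$ with trivial action is a module over Tate cohomology $\hat H^\ast(C_4;\Z) \cong \Z/4[a^{\pm 1}]$ with $|a|=2$, and $a_\lambda$ is identified with the invertible generator $a$. The main obstacle I anticipate is bookkeeping: tracking the precise bidegree shift of multiplication by $a_\lambda$ and confirming that the geometric self-equivalence genuinely descends to an isomorphism of spectral sequences, which ultimately follows from naturality of the slice filtration under multiplication by $a_\lambda \in \pi_{-\lambda}^{C_4}\sphere$ together with the compatibility of $\slicess \to \hfpss \to \tatess$ recorded in the introduction.
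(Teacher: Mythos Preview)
Your core argument that $a_\lambda$ acts invertibly on any $C_4$-spectrum with contractible underlying spectrum is correct and is exactly the mechanism behind the paper's proof. However, there is a genuine gap in your descent step: knowing that multiplication by $a_\lambda$ is an equivalence on the abutment $X^{tC_4}$ does \emph{not} force it to be an isomorphism on every page of the spectral sequence. A self-map of a tower can be an equivalence on the limit while failing to be an isomorphism on associated graded pieces. Your appeal to the $E_2$-page as a module over $\hat H^\ast(C_4;\Z)$ also only addresses the case of trivial action and the integer-graded part, whereas the statement is for arbitrary $X$.

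The fix is already implicit in what you proved: apply your lemma not to the single spectrum $X^{tC_4}$ but to \emph{each term of the Tate tower} $\widetilde{EC_4}\smsh F(EC_{4+},P^n X)$. Every such term has contractible underlying spectrum (because $\widetilde{EC_4}$ does), so $a_\lambda$ acts invertibly on each term and hence on every page. This is precisely the paper's argument, though the paper packages it more efficiently: since $\lambda^H=0$ for every nontrivial $H\leq C_4$, one may take $S(\infty\lambda)$ as a model for $EC_4$, whence $\widetilde{EC_4}\simeq S^{\infty\lambda}\simeq\colim(S^0\xrightarrow{a_\lambda}S^\lambda\xrightarrow{a_\lambda}\cdots)$. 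Smashing the tower with $S^{\infty\lambda}$ is then literally $a_\lambda$-localization, so the entire Tate tower consists of $a_\lambda$-local spectra by construction. Your cofiber-sequence argument with $S(\lambda)_+$ is a valid alternative way to see the same thing, but the identification $\widetilde{EC_4}\simeq S^{\infty\lambda}$ makes the passage to the spectral sequence immediate without any separate descent step.
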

\begin{proof}
Since the fixed point set $\lambda^H$ is trivial for any nontrivial $H\leq C_4$, 
we can take $S(\infty \lambda)$ as a model for $EC_4$ and so $S^{\infty\lambda}$ is 
a model for $\widetilde{EC}_4$. Thus, for any $C_4$-spectrum $X$, the Tate tower 
\[\widetilde{EC}_4\smsh F((EC_4)_+, P^\bullet X)\simeq S^{\infty\lambda} \smsh F((EC_4)_+,P^\bullet X)
\simeq a_\lambda^{-1} F((EC_4)_+,P^\bullet X)\] consists of spectra on which $a_\lambda$ acts invertibly.
\end{proof}

\begin{prop}\label{height2pc}
The class $u_{2\sigma}u_{4\lambda}$ survives the 
$C_4$-HFPSS$(E_{C_4}(1))$. 
\end{prop}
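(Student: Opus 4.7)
The plan is to pass to the Tate spectral sequence, where $a_\lambda$ is invertible. Since $u_{2\sigma}u_{4\lambda}$ lies in filtration zero, \cref{tateisoregion} reduces the problem to showing that it is a permanent cycle in the $C_4$-$\tatess(E_{C_4}(1))$. This is convenient for two reasons: first, the $a_\lambda$-invertibility means any would-be differential target expressible as an $a_\lambda$-multiple can be rewritten and tested against earlier pages; second, via the maps $\slicess\to\hfpss\to\tatess$ combined with \cref{factorization}, differentials on $u_{2\sigma}$ and $u_\lambda$ are controlled by the known slice differentials in the Real-bordism setting of $D_h^{-1}\BPCn\langle 1\rangle$.

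Two facts should drastically constrain the possible differentials on our class. On the one hand, $u_{2\sigma}^2$ is a permanent cycle by \cref{sigpc}, so the Leibniz rule forces $2u_{2\sigma}\,d_r(u_{2\sigma})=0$ on every page, pinning $d_r(u_{2\sigma})$ down up to $2$-torsion. On the other hand, restriction to $C_2$ sends
\[
u_{2\sigma}u_{4\lambda}\longmapsto u_{8\sigma_2},
\]
which is a permanent cycle in the $C_2$-$\hfpss(i^\ast_{C_2}E_{C_4}(1))$ by the $r=1$ case of \cref{sigp}. Thus any nonzero differential on $u_{2\sigma}u_{4\lambda}$ must land in the kernel of restriction to the $C_2$-TateSS. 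Expanding
\[
d_r(u_{2\sigma}u_{4\lambda})=d_r(u_{2\sigma})\,u_{4\lambda}+u_{2\sigma}\,d_r(u_{4\lambda})
\]
by Leibniz, together with the explicit form of $d_r(u_\lambda)$ as a product of $a_\lambda$, $a_\sigma$, and a $\tgen$-class pulled back from the slice spectral sequence, reduces the question to checking that each resulting summand vanishes either by the restriction constraint or by $a_\lambda$-invertibility.

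The main obstacle will be the term $d_r(u_\lambda^4)=4u_\lambda^3\,d_r(u_\lambda)$, whose leading coefficient is $4$, which does not vanish in $\Z_{(2)}$. To dispense with it, the plan is to work page by page, bounding $r$ using the top nontrivial slice differential at height $h=2$ for $\BPCn\langle 1\rangle$. At each stage, the two constraints above allow us to rewrite $u_{2\sigma}\cdot 4u_\lambda^3\,d_r(u_\lambda)$ in terms of classes already known to vanish on the current Tate page: either $a_\lambda$-inversion identifies the target with a class that has already died on a lower page, or its $C_2$-restriction would be nontrivial, contradicting the permanent cycle status recorded above. Carrying out this bookkeeping case by case is the key technical step; once complete, $u_{2\sigma}u_{4\lambda}$ survives the Tate spectral sequence, and hence, by \cref{tateisoregion}, survives the $C_4$-HFPSS as well.
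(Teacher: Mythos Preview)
Your overall framework---reduce to the Tate spectral sequence via \cref{tateisoregion} and exploit the invertibility of $a_\lambda$---matches the paper's. But the way you propose to finish is both incomplete and more fragile than you suggest. The two constraints you isolate do not do the work you want: the Leibniz relation $2u_{2\sigma}\,d_r(u_{2\sigma})=0$ is vacuous whenever the target is an $a_\sigma$-multiple (as it always is here, since $2a_\sigma=0$), and the restriction-to-$C_2$ constraint only says the target lies in $\ker(\res)$, i.e.\ is an $a_\lambda$-multiple---again automatic for the differentials in play. So neither constraint rules anything out. Your Leibniz expansion of $d_r(u_{4\lambda})$ is also subtler than written: once $u_\lambda$ dies on an early page it is no longer available, and one must instead track which power $u_{2^k\lambda}$ survives to which page before applying Leibniz there. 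Your ``page-by-page bookkeeping'' would in effect require reproducing a large portion of the \cite{hishiwax} computation, and you explicitly leave this undone.

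The paper's argument sidesteps all of this with a single division. From \cite{hishiwax} one imports the permanent cycle
\[
\overline{\kappa}=N(\tgen_1)^6 u_{4\lambda}u_{6\sigma}a_{2\lambda}
\]
in the slice spectral sequence of $\bpcfourone$, hence in $\hfpss(E_{C_4}(1))$ and $\tatess(E_{C_4}(1))$. In the Tate spectral sequence $N(\tgen_1)$, $u_{4\sigma}$ (by \cref{sigpc}), and $a_{2\lambda}$ are all invertible permanent cycles, so
\[
u_{2\sigma}u_{4\lambda}=\overline{\kappa}\,N(\tgen_1)^{-6}u_{4\sigma}^{-1}a_{2\lambda}^{-1}
\]
is a permanent cycle there, and \cref{tateisoregion} finishes. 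The moral: rather than trying to exclude every possible differential on $u_{2\sigma}u_{4\lambda}$, find one explicit permanent cycle in the literature that differs from it by Tate-invertible factors.
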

\begin{proof}
    From Corollary 3.20 in \cite{hishiwax}, we have that 
    $\overline{\kappa} = N(\tgen_1)^6 u_{4\lambda}u_{6\sigma}a_{2\lambda}$ 
    is a permanent cycle in $C_4\text{-SliceSS}(\bpcfourone)$ and so also 
    in $C_4\text{-HFPSS}(\bpcfourone)$. From the composition 
    \begin{equation*}
    \text{HFPSS}(\bpcfourone)\to \mathrm{HFPSS}(E_{C_4}(1))\to \mathrm{TateSS}(E_{C_4}(1))
    \end{equation*}
    of morphisms of spectral sequences, we have that $\overline{\kappa}$ 
    is also a permanent cycle in the middle and rightmost spectral sequences. 
    Now by \cref{sigpc}, we know that $u_{4\sigma}$ is a permanent cycle 
    in the middle and rightmost spectral sequences. 
    It follows that 
    \[u_{2\sigma}u_{4\lambda}=
    \overline{\kappa}N(\tgen_1)^{-6}u_{4\sigma}^{-1}a_{2\lambda}^{-1}\] 
    is also a permanent cycle in the rightmost spectral sequence. 
    By \cref{tateisoregion} we have that 
    $u_{2\sigma}u_{4\lambda}$ is also a permanent cycle in the middle 
    spectral sequence.
\end{proof}

\begin{rmk}
The reader may compare the argument to that in the proof of \cite[Proposition~5.25]{bbhs}, 
where the authors work directly with the $C_4$-homotopy fixed point spectral sequence of the
Lubin-Tate spectrum, $E^{\R}_2$. 

One can also verify that the arguments of \cite{hhr17} show that $u_{2\sigma}u_{4\lambda}$ 
is a permanent cycle in the $\slicess(\BPCfour\langle 1\rangle)$. Thus, a more direct proof 
of \cref{height2pc} is to verify this and use the morphism 
\[\slicess(\BPCfour\langle 1\rangle)\to\hfpss(E_{C_{4}}(1)).\]
We opted to use the proof above as we found it easier to verify from the literature. 
\end{rmk}

\begin{rmk}\label{liftingargh2}
The reader uncomfortable with our use of \cref{e2rho} may prefer to argue as follows. 
In \cite{hishiwax} it is shown that $u_{2\lambda}$ and 
$u_{2\sigma}u_{4\lambda}$ support the differentials 
\[d_5 u_{2\lambda} = N(\tgen_1)u_\lambda a_{2\lambda}a_\sigma \quad\text{and}\quad 
d_{13}(u_{2\sigma}u_{4\lambda}) = N(\tgen_2)u_{4\sigma}u_\lambda a_{6\lambda}a_\sigma\] in the 
$\slicess(\BPCfour\langle 2\rangle)$. 
By explicit computation, one can show that the same differentials occur in the 
$\slicess(\BPCfour)$. Passing to $N(\tgen_1)^{-1}\BPCfour$, the first differential 
on $u_{2\lambda}$ will imply that $u_\lambda a_{2\lambda}a_\sigma = 0$ after $E_5$. The 
target of the $d_{13}$ must be killed on the $E_5$-page, so $u_{2\sigma}u_{4\lambda}$ is a 
$d_{13}$-cycle. The vanishing line from \cite{vanishinglines} then completes the proof.  
\end{rmk}

\begin{cor}\label{height2p}
Let $E_2$ be a good Lubin-Tate spectrum at height $h=2$. 
Then for $C_4\leq\mathcal{O}^\times_2$, $E_2$ has a $(10- 2\sigma - 4\lambda)$-periodicity:
\[\Sigma^{2\sigma+4\lambda}E_2\simeq_{C_4} \Sigma^{10}E_2.\]
\end{cor}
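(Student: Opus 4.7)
The plan is to apply \cref{oneuv} directly, with the virtual representation $V = 2\sigma + 4\lambda$, whose underlying dimension is $|V| = 2 + 8 = 10$. Since $h = 2 = 2^{n-1}m$ forces $n = 2$ and $m = 1$, the relevant spectrum $\ECnm$ here is $E_{C_4}(1) = D_2^{-1}\BPCfour\langle 1\rangle$, which is precisely the spectrum appearing in \cref{height2pc}.

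First, I would note that $V = 2\sigma + 4\lambda$ is classically orientable: the two copies of $\sigma$ contribute a determinant of $(-1)^2 = 1$, and each copy of $\lambda$ acts by rotation and hence has determinant $1$. Thus the orientation class $u_V = u_{2\sigma}u_{4\lambda} \in \pi_{|V|-V}^{C_4}P^0_0 E_{C_4}(1)$ is defined in the zero-line of the $C_4$-slice and homotopy fixed point spectral sequences for $E_{C_4}(1)$.

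Next I would invoke \cref{height2pc}, which asserts exactly that $u_{2\sigma}u_{4\lambda}$ is a permanent cycle in the $C_4$-HFPSS of $E_{C_4}(1)$. With this in hand, \cref{oneuv} (whose hypotheses require $E_2$ to be a good Lubin-Tate spectrum at height $h = 2^{n-1}m$, which is satisfied by assumption) applies with $V = 2\sigma + 4\lambda$ and yields the $C_4$-equivariant equivalence
\[
\Sigma^{|V|-V}E_2 \simeq_{C_4} E_2,
\]
which rearranges to $\Sigma^{2\sigma+4\lambda}E_2 \simeq_{C_4} \Sigma^{10}E_2$.

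Since the heavy lifting, namely producing the permanent cycle, has already been done in \cref{height2pc} by pulling back $\overline{\kappa}$ from $\BPCfour\langle 1\rangle$ and dividing by invertible permanent cycles in the Tate spectral sequence, there is no real obstacle to overcome here: the corollary is a direct specialization of the general framework. The only thing to be careful about is bookkeeping, namely matching the subscripts so that $V$ lies in the image of the map of spectral sequences induced by the factorization of the $C_4$-orientation through $E_{C_4}(1)$; this is automatic since $u_{2\sigma}$ and $u_{4\lambda}$ are both already defined in $\mathrm{SliceSS}(\BPCfour)$ and survive the necessary pushforward.
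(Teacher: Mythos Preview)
Your proof is correct and matches the paper's intended argument: the corollary is stated without proof precisely because it follows immediately from \cref{height2pc} via \cref{oneuv}, exactly as you spell out. Your additional bookkeeping (checking classical orientability, identifying $n=2$, $m=1$, and $|V|=10$) simply makes explicit what the paper leaves to the reader.
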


\begin{prop}\label{height4pc}
The class $u_{4\sigma}u_{16\lambda}$ survives the
$C_4\text{-}\hfpss(E_{C_4}(2))$.
\end{prop}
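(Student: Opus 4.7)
The proof plan is to follow the strategy of \cref{height2pc} and exhibit $u_{4\sigma}u_{16\lambda}$ as a product of known permanent cycles in the Tate spectral sequence, then transfer back to the $\hfpss$. The essential tools are \cref{factorization}, which provides the morphism of spectral sequences
\[\slicess(\mathrm{BP}^{((C_4))}\langle 2\rangle)\to\hfpss(E_{C_4}(2))\to\tatess(E_{C_4}(2)),\]
and \cref{tateisoregion}, which allows us to lift filtration-zero permanent cycles from $\tatess$ back to $\hfpss$.

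First, I would assemble the invertible permanent cycles in $\tatess(E_{C_4}(2))$. By \cref{sigpc} applied with $m=2$ (together with \cref{sigp}), the class $u_{8\sigma}=u_{2\sigma}^{4}$ is an invertible permanent cycle. By \cref{localizationfactorization}, the norm $N(\tgen_2)$ is an invertible permanent cycle in $E_{C_4}(2)$. Passing from $\hfpss$ to $\tatess$, the Euler class $a_\lambda$ additionally becomes invertible. These three classes are the building blocks.

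Next, I would identify a height-$4$ analog of the detection class $\bar\kappa$ used in \cref{height2pc}. Concretely, I would seek a permanent cycle in $\slicess(\mathrm{BP}^{((C_4))}\langle 2\rangle)$ of the form
\[P=N(\tgen_2)^{a}\cdot u_{16\lambda}\cdot u_{(4+8k)\sigma}\cdot a_\lambda^{c},\]
coming from an HHR-style detection statement in the slice spectral sequence analyses of quotients of $\mathrm{BP}^{((C_4))}$ carried out in \cite{hhr17,hishiwax,bbhs}. Once such a $P$ is in hand, the identity
\[u_{4\sigma}u_{16\lambda}=P\cdot N(\tgen_2)^{-a}\cdot u_{8\sigma}^{-k}\cdot a_\lambda^{-c}\]
in $\tatess(E_{C_4}(2))$ expresses $u_{4\sigma}u_{16\lambda}$ as a product of permanent cycles, so it is itself a permanent cycle. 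Since it sits in filtration zero, \cref{tateisoregion} concludes that it is a permanent cycle in $\hfpss(E_{C_4}(2))$.

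The main obstacle is pinning down the explicit detection class $P$: at height $2$, we had the very classical class $\bar\kappa$ detecting an element of $\pi_{20}\sphere$ in $\slicess(\mathrm{BP}^{((C_4))}\langle 1\rangle)$; at height $4$, the analogous role is played by a less well-publicized class that must be extracted carefully from the literature. An alternative route, in the spirit of \cref{liftingargh2}, is to avoid producing $P$ altogether: one enumerates the possible targets of differentials on $u_{4\sigma}u_{16\lambda}$ in $\slicess(\mathrm{BP}^{((C_4))}\langle 2\rangle)$ using the known HHR-type differentials on $u_{?\lambda}$ classes. Each such target has the shape $N(\tgen_i)\cdot(\text{other factors})$ with $i\leq 2$; in the localized, quotiented theory these targets are either killed by the images of differentials on lower $u_{?\lambda}$ classes (once $N(\tgen_2)$ is inverted) or eliminated by the vanishing-line results of \cite{vanishinglines}. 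This second route has the advantage of not depending on \cref{e2rho}.
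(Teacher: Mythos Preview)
Your strategy is exactly the paper's: pass to $\tatess(E_{C_4}(2))$, write $u_{4\sigma}u_{16\lambda}$ as a product of a permanent cycle $P$ with inverses of $N(\tgen_2)$, $u_{8\sigma}$, and $a_\lambda$, then invoke \cref{tateisoregion}. The one thing you leave open is the source of $P$, and here the paper's choice is slightly different from what you suggest. Rather than hunting for a $\bar\kappa$-style detection class, the paper observes that the \emph{target} of any differential is automatically a permanent cycle. From \cite{hishiwax} one has, in $\slicess(\bpcfourtwo)$,
\[d_{61}(u_{32\lambda}u_{4\sigma}a_\sigma)=N(\tgen_2)^5 u_{16\lambda}u_{20\sigma}a_{31\lambda}=:y,\]
so $y$ is a permanent cycle there and hence in $\hfpss(\bpcfourtwo)$, $\hfpss(E_{C_4}(2))$, and $\tatess(E_{C_4}(2))$. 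This is precisely of your form $P$ with $a=5$, $k=2$, $c=31$, and the identity
\[u_{4\sigma}u_{16\lambda}=N(\tgen_2)^{-5}u_{16\sigma}^{-1}a_{31\lambda}^{-1}y\]
finishes the argument. Your alternative route via lifting differentials to $\BPCfour$ is also what the paper sketches in the remark immediately following the proof.
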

\begin{proof}
    From the Hill-Shi-Wang-Xu \cite{hishiwax} differential 
    \[d_{61}(u_{32\lambda}u_{4\sigma}a_\sigma) = N(\tgen_2)^5 u_{16\lambda}u_{20\sigma}a_{31\lambda} =: y\] 
    in the $\text{SliceSS}(\bpcfourtwo)$ we have that 
    $y$ is a permanent cycle in this spectral sequence and so also in 
    $\text{HFPSS}(\bpcfourtwo)$. From the composition 
    \begin{equation*}
    \text{HFPSS}(\bpcfourtwo)\to 
    \text{HFPSS}(E_{C_4}(2))  \to\text{TateSS}(E_{C_4}(2))
    \end{equation*}
    we have that $y$ is also a permanent cycle in 
    both the middle and rightmost spectral sequences. From \cref{sigpc} 
    we have that $u_{8\sigma}$ is a permanent cycle in the middle and rightmost spectral  
    sequences as well. It follows that 
    \[u_{4\sigma}u_{16\lambda} = N(\tgen_2)^{-5}u_{16\sigma}^{-1}a_{31\lambda}^{-1}y\] is a permanent cycle in the rightmost spectral sequence. By \cref{tateisoregion} 
    we have that $u_{4\sigma}u_{16\lambda}$ is also a permanent 
    cycle in the middle spectral sequence. 
    
\end{proof}

\begin{rmk}\label{liftingargh4}
As in \cref{liftingargh2}, the uneasy reader could check that the $d_{61}$ used in the 
theorem above lifts to a differential in $\BPCfour$. The same argument will hold after 
passing to the homotopy fixed point and tate fixed point spectral sequences for 
$N(\tgen_2)^{-1}\BPCfour$. 
\end{rmk}

\begin{cor}\label{height4p}
Let $E_4$ be a good Lubin-Tate spectrum at height $h=4$. 
Then for $C_4\leq\mathcal{O}^\times_4$, $E_4$ has a $(36- 4\sigma - 16\lambda)$-periodicity:
\[\Sigma^{4\sigma+16\lambda}E_4\simeq_{C_4} \Sigma^{36}E_4.\]
\end{cor}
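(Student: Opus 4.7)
The plan is to deduce this corollary directly by feeding the permanent cycle from \cref{height4pc} into the general mechanism of \cref{oneuv}. Since $h = 4 = 2^{n-1}m$ with $n = m = 2$, the spectrum $\ECnm$ in the statement of \cref{oneuv} is exactly $E_{C_4}(2)$, so the two results fit together without any reindexing.

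First I would note that the orientation class $u_V$ for $V = 4\sigma + 16\lambda$ agrees, in filtration zero of the slice (hence also of the homotopy fixed point) spectral sequence, with the product $u_{4\sigma} u_{16\lambda}$. This multiplicativity is immediate from the construction of the $u_V$ classes: they come from the unit map $S^0 \to R$ combined with the identification $S^{V+W} \simeq S^V \smsh S^W$, which forces $u_{V+W} = u_V u_W$ on the level of $P^0_0$. The representation $V = 2(2\sigma) + 16\lambda$ is classically orientable since $2\sigma$ and $\lambda$ each are (the two-dimensional rotation representation $\lambda$ has determinant $1$, and $2\sigma$ has determinant $(-1)^2 = 1$).

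With these preliminaries in place, \cref{height4pc} says precisely that $u_V$ is a permanent cycle in $C_4\text{-}\hfpss(E_{C_4}(2))$, so \cref{oneuv} applies and produces a $C_4$-equivariant equivalence
\[
\Sigma^{|V|} E_4 \simeq_{C_4} \Sigma^{V} E_4.
\]
Computing $|V| = 4 + 32 = 36$ and rearranging gives the claimed $\Sigma^{4\sigma + 16\lambda} E_4 \simeq_{C_4} \Sigma^{36} E_4$.

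I do not expect any real obstacle: the nontrivial work lives in \cref{height4pc}, where the Hill-Shi-Wang-Xu slice differential on $u_{32\lambda}u_{4\sigma}a_\sigma$ in $\slicess(\bpcfourtwo)$ is transported through the Tate isomorphism region to conclude that $u_{4\sigma}u_{16\lambda}$ is a permanent cycle, and in \cref{oneuv}, which handles the descent from the $C_{2^n}$-HFPSS of $\ECnm$ to the Lubin-Tate spectrum $E_h$. The present corollary is just the assembly of these two ingredients.
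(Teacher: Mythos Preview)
Your proposal is correct and follows exactly the (implicit) approach of the paper: the corollary is stated without proof there, immediately after \cref{height4pc}, because it is meant to be a direct application of \cref{oneuv} with $n=m=2$ and $V=4\sigma+16\lambda$. Your unpacking of the multiplicativity $u_{V+W}=u_Vu_W$ and the classical orientability of $V$ makes explicit what the paper leaves to the reader.
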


\begin{prop}\label{uW1pc}
Let $E_4$ be a good Lubin-Tate spectrum at height $h=4$. 
The class $u_{12\sigma_8}u_{12\lambda_1}u_{32\lambda_0}$ is a permanent cycle in the 
$C_8$-homotopy fixed point spectral sequence for $E_4$. Thus, the $C_8$-spectrum $E_4$ has 
a $(100-12\sigma_8-12\lambda_1-32\lambda_0)$-periodicity. 
\end{prop}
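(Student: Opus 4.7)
The plan is to factor the class as $u_{12\sigma_8}\cdot u_{12\lambda_1}u_{32\lambda_0}$, handle the first factor via the $\sigma$-periodicity from \cref{sigp}, and treat the remaining product using the Tate-spectral-sequence trick used in the proofs of \cref{height2pc,height4pc}.

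First, I would invoke \cref{sigp} with $n=3$, $m=1$, and $r=3$ to conclude that $u_{4\sigma_8}$ is a permanent cycle in the $C_8$-slice spectral sequence of $N(\tgen_1)^{-1}\mathrm{BP}^{((C_8))}$, and hence, via \cref{factorization}, in $\hfpss(E_4)$. Therefore $u_{12\sigma_8} = u_{4\sigma_8}^{3}$ is a permanent cycle in $\hfpss(E_4)$, and the problem reduces to showing that $u_{12\lambda_1}u_{32\lambda_0}$ is a permanent cycle there.

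By \cref{tateisoregion}, since $u_{12\lambda_1}u_{32\lambda_0}$ sits in filtration zero, it suffices to show that it survives the $C_8$-Tate spectral sequence of $E_4$. I would follow the template of \cref{height4pc}: produce a Hill--Shi--Wang--Xu style slice differential in $\slicess(\mathrm{BP}^{((C_8))}\langle 1\rangle)$ whose target has the form
\[z\ =\ N(\tgen_1)^{\,j}\, u_{12\lambda_1}u_{32\lambda_0}\cdot u_{4\sigma_8}^{\,k}\cdot a_{\lambda_0}^{\,\ell},\]
for appropriate non-negative integers $j,k,\ell$. The class $z$ is automatically a permanent cycle, and pushing it forward along $\mathrm{BP}^{((C_8))}\langle 1\rangle\to E_{C_8}(1)\to E_4$ supplied by \cref{factorization} yields a permanent cycle in $\hfpss(E_4)$, hence in the Tate spectral sequence. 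In the $C_8$-Tate spectral sequence for $E_4$, the Euler class $a_{\lambda_0}$ is invertible because $\lambda_0$ is a faithful $C_8$-representation, $N(\tgen_1)$ is already invertible in $E_{C_8}(1)$, and $u_{4\sigma_8}$ is the permanent cycle established above. Dividing through then exhibits $u_{12\lambda_1}u_{32\lambda_0}$ as a permanent cycle in the Tate spectral sequence; by \cref{tateisoregion} it is also a permanent cycle in $\hfpss(E_4)$. The asserted $(100-12\sigma_8-12\lambda_1-32\lambda_0)$-periodicity then follows from \cref{oneuv}.

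The hardest step will be locating and verifying the requisite HSWX-type slice differential producing such a $z$. The exponents $j,k,\ell$ and the source class are forced by slice-bidegree constraints, and one expects the differential to arise by iterating the basic Hill--Hopkins--Ravenel slice differentials on $u_{2^i\lambda_0}$ and $u_{2^i\lambda_1}$, as adapted to $C_8$-actions in \cite{hishiwax}. In the same spirit as \cref{liftingargh2,liftingargh4}, a reader uneasy with \cref{e2rho} can alternatively track the relevant differential directly in $\slicess(\BPCn)$ and verify that any earlier candidate differential on the target class is forced to vanish by degree considerations.
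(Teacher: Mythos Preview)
Your template—manufacture a permanent cycle in the Tate spectral sequence from the target of a slice differential, then divide by invertible classes—is exactly what the paper uses at the $C_4$ level in \cref{height2pc,height4pc}, but for this $C_8$ statement the paper takes a different and much shorter route. Instead of searching for a $C_8$-level slice differential, it \emph{norms up} the $C_4$ permanent cycle already in hand. From \cref{height4pc} and \cref{sigpc} the product $u_{12\sigma_4}u_{16\lambda}=(u_{4\sigma_4}u_{16\lambda})\cdot u_{8\sigma_4}$ is a permanent cycle in the $C_4$-HFPSS of $E_4$; its norm
\[N_{C_4}^{C_8}\bigl(u_{12\sigma_4}\,u_{16\lambda}\bigr)\;=\;\frac{u_{12\lambda_1}\,u_{32\lambda_0}}{u_{44\sigma_8}}\]
is then a permanent cycle in the $C_8$-HFPSS, and multiplying by the permanent cycle $u_{56\sigma_8}=(u_{4\sigma_8})^{14}$ from \cref{sigpc} yields $u_{12\sigma_8}u_{12\lambda_1}u_{32\lambda_0}$. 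No Tate argument and no new slice computation is required at the $C_8$ level.

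Your proposal, by contrast, leaves its decisive step entirely open. You posit a slice differential in $\slicess(\mathrm{BP}^{((C_8))}\langle 1\rangle)$ hitting a class of the shape $N(\tgen_1)^{j}\,u_{12\lambda_1}u_{32\lambda_0}\,u_{4\sigma_8}^{k}\,a_{\lambda_0}^{\ell}$, but you neither pin down $j,k,\ell$ nor verify that any such differential occurs. The reference \cite{hishiwax} you invoke treats only the $C_4$ slice spectral sequence of $\bpcfourtwo$; it contains no $C_8$ slice differentials, and the $C_8$ slice spectral sequence of $\mathrm{BP}^{((C_8))}\langle 1\rangle$ is not worked out in the literature. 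The paper sidesteps this gap completely by reducing the $C_8$ statement to the $C_4$ input via the norm, and that reduction is the idea your argument is missing.
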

\begin{proof}
    From \cref{height4pc} we have that $u_{4\sigma_4} u_{16\lambda}$ is a permanent cycle in the
    $C_4$-HFPSS$(E_4)$. From \cref{sigpc} we have $u_{8\sigma_4}$ is a permanent cycle there as well so that 
    the product $u_{12\sigma_4}u_{16\lambda}$ is a permanent cycle. It follows that the norm 
    \[N(u_{12\sigma_4} u_{16\lambda}) 
    = \frac{u_{12\lambda_1} u_{32\lambda_0}}{u_{44\sigma_8}}\] 
    is a permanent cycle in the $C_8$-HFPSS$(E)$.
    By \cref{sigpc}, $u_{4\sigma_8}$ is a permanent cycle as well. Thus the product
    \[u_{56\sigma_8}N(u_{12\sigma_4} u_{16\lambda}) = u_{12\sigma_8}u_{12\lambda_1}u_{32\lambda_0}\] 
    is a permanent cycle.
\end{proof}

\subsection{The integer shifts}
Finally, we employ the repository of periodicities in the preceding 
section to prove the self-duality of some higher real $K$-theories.

\begin{thm}\label{C4shifts}
Let $E$ be a good Lubin-Tate spectrum at height $h$. Then 
\[DE\simeq_{C_4}\begin{cases} \Sigma^{12}E & h=2\\
\Sigma^{-h^2}E & h=4,8\end{cases}.\] 
\end{thm}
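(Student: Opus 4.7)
The plan is to combine \cref{thec4rep}, which identifies
\[DE \simeq_{C_4} \Sigma^{-\frac{h^2}{2}(1+\sigma_4)}E,\]
with the $\mathrm{RO}(C_4)$-periodicities of $E$ assembled in \cref{computations}. Passing from the virtual representation $-\tfrac{h^2}{2}(1+\sigma_4)$ to an integer suspension amounts to exhibiting a $C_4$-equivalence $\Sigma^{\frac{h^2}{2}\sigma_4}E \simeq_{C_4} \Sigma^{N}E$ for a suitable integer $N$, after which the desired shift $s_h$ reads off directly. Equivalently, I need $\tfrac{h^2}{2}(1+\sigma_4) + s_h$ to lie in the kernel of the $J$-homomorphism $\mathrm{RO}(C_4) \to \mathrm{Pic}(E^{hC_4})$.

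For $h=4$ and $h=8$, this is immediate from the $\sigma$-periodicity. Writing $h = 2^{n-1}$ with $n \in \{3,4\}$ and applying \cref{sigp} with $m = 1$ and $r = 2$ yields a $(\tfrac{h^2}{2} - \tfrac{h^2}{2}\sigma_4)$-periodicity of $E$, so that $\Sigma^{\frac{h^2}{2}\sigma_4}E \simeq_{C_4} \Sigma^{\frac{h^2}{2}}E$. Substituting into the formula above produces
\[DE \simeq_{C_4} \Sigma^{-\frac{h^2}{2}}\Sigma^{-\frac{h^2}{2}\sigma_4}E \simeq_{C_4} \Sigma^{-h^2}E.\]

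The case $h=2$ is the only one that requires nontrivially mixing periodicities. Here the $\sigma$-periodicity available from \cref{sigp} at $n=2$, $m=1$, $r=2$ is $(4 - 4\sigma_4)$, which cannot cancel the $2\sigma_4$-contribution in $V_2(C_4) = 2 + 2\sigma_4$ with integer cost. The fix is to combine the $\rho_{C_4}$-periodicity of \cref{rhoperiodicity} with the $(10 - 2\sigma_4 - 4\lambda)$-periodicity of \cref{height2p}. Since $\rho_{C_4} = 1 + \sigma_4 + \lambda$, the elementary $\mathrm{RO}(C_4)$-identity
\[4\rho_{C_4} + (10 - 2\sigma_4 - 4\lambda) = 14 + 2\sigma_4\]
upgrades $E$ to a $(14 + 2\sigma_4)$-periodic $C_4$-spectrum, i.e.\ $\Sigma^{2\sigma_4}E \simeq_{C_4} \Sigma^{-14}E$. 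Feeding this into $DE \simeq_{C_4} \Sigma^{-2 - 2\sigma_4}E$ yields $DE \simeq_{C_4} \Sigma^{-2}\Sigma^{-14}E = \Sigma^{12}E$, as claimed.

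Once the correct building blocks are in place the argument is essentially a short piece of $\mathrm{RO}(C_4)$-arithmetic, and there is no real obstacle at this stage. All technical content has been packaged into upstream results: \cref{thec4rep} (resting on Clausen's linearization theorem and \cref{therepthm}) and the permanent-cycle results of \cref{sigp} and \cref{height2p}, the latter importing Hill-Shi-Wang-Xu input through \cref{factorization} and the Tate-to-HFPSS comparison of \cref{tateisoregion}. The only observation specific to \cref{C4shifts} is that for $h=2$ the combination $4\rho_{C_4} + (10 - 2\sigma_4 - 4\lambda)$ is precisely what eliminates the $\lambda$ term from the height-$2$ periodicity while leaving a $2\sigma_4$ residue, so both non-integer periodicities are genuinely needed.
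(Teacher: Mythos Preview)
Your approach is essentially identical to the paper's: use \cref{thec4rep} to reduce to a periodicity question, then for $h=4,8$ invoke the $\sigma$-periodicity from \cref{sigp}, and for $h=2$ combine \cref{height2p} with four copies of the $\rho_4$-periodicity to produce exactly the identity $10-2\sigma-4\lambda = 12 + (2+2\sigma) - 4\rho_4$ the paper writes down. One slip to fix: in your final substitution for $h=2$ you need $\Sigma^{-2\sigma_4}E \simeq \Sigma^{14}E$ (not $\Sigma^{-14}E$), so the last line should read $DE \simeq \Sigma^{-2}\Sigma^{14}E = \Sigma^{12}E$; as written, $-2 + (-14) = -16 \neq 12$.
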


\begin{proof}
We start with the $h=2$ case. By \cref{height2pc}, $J(10-2\sigma-4\lambda) = 0$. 
We can rewrite this representation as 
\[10-2\sigma-4\lambda = 12 + (2 + 2\sigma) - 4\rho_4.\] 
\cref{rhoperiodicity} implies $J(\rho_4) = 0$. By \cref{thec4rep}, $D(E^{hC_4})\simeq 
J(-2-2\sigma)$. Thus $DE\simeq_{C_4}\Sigma^{12}E$ in the case $h=2$.

Now for $h=4$. Applying \cref{sigp}, with $n=m=r=2$ gives the $\sigma_4$-periodicity 
$J(8 - 8\sigma) = 0$. By
\cref{thec4rep}, the dual is given by 
\[J(-8-8\sigma) = -16.\] Thus $DE\simeq_{C_4}\Sigma^{-16}E$ in the case $h=4$.
The $h=8$ result follows similarly from applying \cref{sigp} with $n=r=2, m=4$.  
\end{proof}

\begin{cor}\label{C4shiftshfpcor}
    Let $E$ be a good Lubin-Tate spectrum at height $h$ and $\estab_h$ its extended stabilizer group. 
    If $G\leq\estab_h$ is a finite subgroup such that $G\cap\mathcal{O}^\times_h\leq C_4$.
    Then \[DE^{hG}\simeq\begin{cases} \Sigma^{12}E^{hG} & h=2\\
    \Sigma^{-h^2}E^{hG} & h=4,8\end{cases}.\] 
\end{cor}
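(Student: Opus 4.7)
The plan is to deduce this corollary from \cref{C4shifts} by a two-stage descent argument. First, I would invoke the remark at the end of \cref{dualityresults}: for any finite subgroup $G \leq \estab_h$, Greenlees--Sadofsky Tate vanishing yields a canonical equivalence
\[
D(E^{hG}) \simeq (DE)^{hG}.
\]
So it suffices to identify $(DE)^{hG}$ with $\Sigma^s E^{hG}$, where $s$ is $12$ or $-h^2$ as appropriate. Next, set $H = G \cap \mathcal{O}^\times_h$. Since $\mathcal{O}^\times_h$ is normal in $\estab_h \cong \mathcal{O}^\times_h \rtimes \Gal(k/\F_2)$ (we use here that $E$ has all automorphisms), $H$ is a normal subgroup of $G$ with quotient $G/H$ a subgroup of $\Gal(k/\F_2)$. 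The iterated homotopy fixed points formula gives
\[
(DE)^{hG} \simeq \bigl((DE)^{hH}\bigr)^{h(G/H)}.
\]

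Restricting the $C_4$-equivariant equivalence of \cref{C4shifts} along $H \leq C_4$ produces an $H$-equivariant equivalence $DE \simeq_H \Sigma^s E$, and hence $(DE)^{hH} \simeq \Sigma^s E^{hH}$. The remaining task is to upgrade this to a $(G/H)$-equivariant equivalence, so that passing to $(G/H)$-fixed points yields $(DE)^{hG} \simeq \Sigma^s E^{hG}$, as required.

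This equivariance upgrade is the step I expect to be the main obstacle, and here is how I would handle it. By inspection of the proof of \cref{C4shifts}, the equivalence $DE \simeq_{C_4} \Sigma^s E$ is realized as multiplication by an explicit invertible class in $\pi_\star^{C_4} E$, namely a product of the dualizing-representation class coming from \cref{thec4rep}, the $\rho_4$-periodicity class of \cref{rhoperiodicity}, and the $\sigma$- or $\sigma$-and-$\lambda$-periodicity classes of \cref{sigp,height2pc,height4pc}. Each of these classes is either an orientation class $u_V$ pulled back from the unit map $\sphere \to E$, an Euler class $a_V$ (also pulled back from $\sphere$), or the image in $E$ of a Galois-invariant permanent cycle in the slice spectral sequence of $E_{C_{2^n}}(m)$ via the $C_{2^n}$-orientation of \cref{factorization}. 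All three types of class are manifestly fixed by any Galois action that commutes with the underlying $C_4$-action, in particular by the action of $G/H$. Therefore the $H$-equivariant equivalence lifts to a $G$-equivariant one.

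Assembling the pieces, we obtain
\[
D(E^{hG}) \simeq (DE)^{hG} \simeq \bigl((DE)^{hH}\bigr)^{h(G/H)} \simeq \bigl(\Sigma^s E^{hH}\bigr)^{h(G/H)} \simeq \Sigma^s E^{hG},
\]
which is the statement of the corollary. The only technically delicate point is verifying Galois invariance of the unit classes above; this is essentially automatic once one is careful to track how each class is constructed, but it is the crux of the descent and deserves explicit comment in the written proof.
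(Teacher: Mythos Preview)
Your overall decomposition matches the paper's: both reduce to $G_0 = G\cap\mathcal{O}^\times_h$ via iterated fixed points and then handle the residual $\widetilde{\Gal}=G/G_0$-action. The difference is at the descent step. You propose to verify directly that the explicit unit realizing $DE\simeq_H\Sigma^sE$ is Galois-invariant, but this is not as ``manifest'' as you claim. In the $h=2$ case the equivalence of \cref{C4shifts} uses the $\rho_4$-periodicity of \cref{rhoperiodicity}, which is multiplication by the norm of a unit $\bar u\in\pi^{C_2}_{\rho_2}E$ lifting a generator of $\pi_2 E$. The Galois group acts nontrivially on $W(k)$ and hence on any such generator, so there is no reason for this class to be Galois-fixed. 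The orientation and Euler classes are indeed pulled back from the sphere and hence invariant, but that alone does not suffice; you would need an additional argument to produce a Galois-equivariant representative of the $\rho$-periodicity, and this is the real content of the step.

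The paper avoids this entirely by invoking \cite[Lemma~1.37]{bobkovagoerss}, which gives a $\widetilde{\Gal}$-equivariant equivalence $E^{hG_0}\simeq\widetilde{\Gal}_+\smsh E^{hG}$. This means $E^{hG_0}$ is, as a $\widetilde{\Gal}$-equivariant $E^{hG}$-algebra, a product of $|\widetilde{\Gal}|$ copies of $E^{hG}$; consequently the forgetful map from $\widetilde{\Gal}$-equivariant invertible $E^{hG_0}$-modules to non-equivariant ones is injective (it is the diagonal into a product). Thus the non-equivariant equivalence $DE^{hG_0}\simeq\Sigma^s E^{hG_0}$ obtained by restricting \cref{C4shifts} automatically upgrades to a $\widetilde{\Gal}$-equivariant one, with no need to inspect the particular unit. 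This is both shorter and more robust than tracking Galois-invariance class by class.
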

\begin{proof}
Let $\widetilde{\Gal}$ be the image of $G$ under $\estab_h\to\Gal(k/\F_2)$, where 
$k = E_0/\mathfrak{m}$. The kernel of $G\to\widetilde{\Gal}$ is then $G_0 = G\cap\mathcal{O}^\times_h$. 
Then from \cite[Lemma~1.37]{bobkovagoerss}, we have 
\[E^{hG_0}\simeq \widetilde{\Gal}_+\smsh E^{hG}.\] 
If $DE^{hC_4}\simeq\Sigma^s E^{hC_4}$, then by restriction $DE^{hG_0}\simeq\Sigma^s E^{hG_0}$. 
Thus 
\begin{equation*}
\begin{split}
DE^{hG} &\simeq (DE^{hG_0})^{h\widetilde{\Gal}}\\
&\simeq (\Sigma^s E^{hG_0})^{h\widetilde{\Gal}}\\
&\simeq (\Gal_+\smsh\Sigma^s E^{hG})^{h\widetilde{\Gal}}\simeq \Sigma^s E^{hG}.
\end{split}
\end{equation*}
\end{proof}

\begin{rmk}
We note that the result above for heights $h = 4,8$ was obtained only from knowing the 
$\sigma$-periodicity. One might wonder whether this strategy could be generalized 
to higher heights.  Unfortunately, this is not the case.
At height $h = 2^{n-1}$, the element $N(\bar{t}_{2^{n-2}})$ is inverted
leading to a $(2^{2^{n-2}+1} - 2^{2^{n-2}+1}\sigma)$-periodicity, while
the dualizing $C_4$-representation is given in this case by 
\[V_{2^{n-1}}(C_4) = 2^{2n-3}+2^{2n-3}\sigma.\] It appears to be a lucky coincidence that 
for $n = 3,4$, the number of copies of $\sigma$ in the $\sigma$-periodicity is equal to 
the number of copies of $\sigma$ in the dualizing representation, as this will not be the 
case for any $n>4$.
It should be noted that this does not imply that the dual in this case
is not an integer shift. In fact, one can see this at the height $h=2$.
The $\sigma$-periodicity there is $4-4\sigma$, and the dualizing 
representation is $2+2\sigma$, yet $DE_2^{hC_4}\simeq \Sigma^{12}E_2^{hC_4}$.
\end{rmk}

Following the proof of the equivalence $DE_2^{hC_4}\simeq \Sigma^{12}E_2^{hC_4}$ in the preceding theorem, 
we can find a $C_8$-equivariant integer shift.
The key in that proof was finding an $\mathrm{RO}(G)$-periodicity of 
dimension zero of the form
\begin{equation*}\tag{4.3}\label{form}
\textrm{(shift)}+\textrm{(dualizing representation)} - \textrm{(copies of regular representation)}
\end{equation*}
This is precisely our approach in the proof of \cref{C8shift} below.

\begin{thm}\label{C8shift}
Let $E_4$ be a good Lubin-Tate spectrum at height $h=4$. Then 
\[DE_4\simeq_{C_8} \Sigma^{112}E_4.\]

\begin{proof}
Suppose $DE_4^{hC_8}\simeq \Sigma^{s_D}E_4^{hC_8}$ for some integer 
$s_D$. From \cref{C4shifts},
$D(E_4^{hC_4})\simeq\Sigma^{-16}E_4^{hC_4}$ and from 
\cref{intperiodicity}, $E_4^{hC_4}$ is $128$-periodic. We then have 
\[s_D\equiv -16 \mod 128.\] 
Consider a virtual representation of the form in \cref{form}.
\[W_\ell = s_D + V_4 - \ell\rho_8.\] In order for $\textrm{dim}W_\ell = 0$, we
need $s_D = 8\ell - 16$.
The first such representation to consider at $\ell = 16$ is
\[W_1 = 112 + 4(1+\sigma+\lambda_1) - 16\rho_8, \]
where, recall the dualizing $C_8$-representation from \cref{thedualizingrep} is 
\[V_4 = 4(1+\sigma+\lambda_1),\] and that \[\rho_8 = 1 + \sigma_8 + \lambda_1 + 2\lambda_0.\]
There is a corresponding orientation class 
\begin{equation*}\label{C8pc}
u(W_1) = u_{12\sigma_8} u_{12\lambda_1} u_{32\lambda_0}, 
\end{equation*}
such that if $u(W_1)$ survives the $C_8$-HFPSS$(E_4)$, then 
$J(W_1) = 0$, implying that $J(-V_4) = 112$. 
The class $u(W_1)$ is precisely the class from \cref{uW1pc}, so we are done.
\end{proof}
\end{thm}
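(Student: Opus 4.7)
The plan is to reduce everything to producing a single $\mathrm{RO}(C_8)$-periodicity of the correct dimension and dualizing shape, and then invoke the already-established permanent cycle. By Theorem~\ref{thedualizingrep} (or its corollary for $n=3,\,m=1$), the dualizing $C_8$-representation at height $4$ is
\[V_4 \;=\; 4(1+\sigma+\lambda_1),\]
so the general duality theorem gives $DE_4\simeq_{C_8}\Sigma^{-V_4}E_4$. To prove the theorem, it therefore suffices to exhibit an $\mathrm{RO}(C_8)$-periodicity of the form $W = 112 + V_4 - k\rho_8$ for some $k\in\Z$, because Proposition~\ref{rhoperiodicity} kills the $\rho_8$ term, leaving $\Sigma^{-V_4}E_4\simeq_{C_8}\Sigma^{112}E_4$.

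First I would narrow down the candidate integer shift. Restricting any $C_8$-equivariant self-duality $DE_4\simeq_{C_8}\Sigma^{s_D}E_4$ to $C_4$ must be compatible with Theorem~\ref{C4shifts}, which gives $DE_4\simeq_{C_4}\Sigma^{-16}E_4$. Combined with the $128$-periodicity of $E_4^{hC_4}$ from Theorem~\ref{intperiodicity} (the case $n=3$, $\ell=1$), this forces $s_D\equiv -16\pmod{128}$; the smallest positive candidate is $s_D = 112$.

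Next I would try to realize $s_D=112$ by a dimension-zero virtual representation of the form $W = 112 + V_4 - k\rho_8$. Setting $|W|=0$ gives $k=16$, so
\[W \;=\; 112 + 4(1+\sigma+\lambda_1) - 16(1+\sigma+\lambda_1+2\lambda_0) \;=\; 112 - 12\sigma_8 - 12\lambda_1 - 32\lambda_0.\]
The associated orientation class is then exactly
\[u(W) \;=\; u_{12\sigma_8}\,u_{12\lambda_1}\,u_{32\lambda_0}.\]

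The main obstacle, and the only step that is not formal, is to show that $u(W)$ is a permanent cycle in the $C_8$-$\hfpss(E_4)$. This is precisely Proposition~\ref{uW1pc}, whose proof routes through the $C_4$-result of Proposition~\ref{height4pc} via the norm $N_{C_4}^{C_8}$ and the $\sigma_{C_8}$-periodicity of Corollary~\ref{sigp}. Granting that permanent cycle, the analogue of Proposition~\ref{oneuv} for $C_8$ (applied to the good Lubin-Tate spectrum $E_4$ via the factorization through $E_{C_8}(2)$ in Theorem~\ref{factorization}) yields $J(W)=0$, and combining with Proposition~\ref{rhoperiodicity} gives $J(112+V_4)=0$. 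Thus $\Sigma^{112}E_4 \simeq_{C_8}\Sigma^{-V_4}E_4\simeq_{C_8}DE_4$, completing the argument.
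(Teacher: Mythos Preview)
Your proposal is correct and follows essentially the same route as the paper: identify $V_4=4(1+\sigma+\lambda_1)$, use the $C_4$ result together with the $128$-periodicity to pin down the candidate shift $112$, form the dimension-zero virtual representation $W=112+V_4-16\rho_8$, and then cite Proposition~\ref{uW1pc} for the survival of $u_{12\sigma_8}u_{12\lambda_1}u_{32\lambda_0}$. One small arithmetic slip: the trivial summand of $W$ is $112+4-16=100$, not $112$, so $W=100-12\sigma_8-12\lambda_1-32\lambda_0$ (compare the periodicity stated in Proposition~\ref{uW1pc}); this does not affect the orientation class or the conclusion.
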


\begin{cor}\label{C8shiftshfpcor}
Let $E_4$ be a good Lubin-Tate spectrum at height $h=4$ with extended stabilizer group 
$\estab_4$. Suppose $G\leq\estab_4$ is such that $G\cap\mathcal{O}^\times_4\leq C_8$. Then 
\[DE_4^{hG}\simeq \Sigma^{112}E_4^{hG}.\]
\end{cor}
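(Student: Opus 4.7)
The plan is to follow the proof of the preceding \cref{C4shiftshfpcor} verbatim, simply replacing the role of \cref{C4shifts} (which gives the $C_4$-equivariant duality) with \cref{C8shift} (which gives the $C_8$-equivariant duality). The structural input is that we have already upgraded the statement about Lubin-Tate spectra to a $C_{2^n}$-equivariant statement, and the Galois action factors through a finite quotient that acts essentially by a free part once we restrict to $G_0 = G \cap \mathcal{O}^\times_4$.

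More concretely, let $k = E_0/\mathfrak{m}$ and let $\widetilde{\Gal}$ denote the image of $G$ under the natural surjection $\estab_4 \to \Gal(k/\F_2)$. The kernel of $G \to \widetilde{\Gal}$ is precisely $G_0 = G \cap \mathcal{O}^\times_4$, which by hypothesis is a subgroup of $C_8$. By \cite[Lemma~1.37]{bobkovagoerss}, there is an equivalence
\[
E_4^{hG_0} \simeq \widetilde{\Gal}_+ \smsh E_4^{hG}.
\]
Meanwhile, \cref{C8shift} gives a $C_8$-equivariant equivalence $DE_4 \simeq_{C_8} \Sigma^{112} E_4$; restricting along $G_0 \leq C_8$ produces a $G_0$-equivariant equivalence $DE_4 \simeq_{G_0} \Sigma^{112} E_4$, whence $DE_4^{hG_0} \simeq \Sigma^{112} E_4^{hG_0}$.

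Taking homotopy $\widetilde{\Gal}$-fixed points of both sides and combining with the displayed equivalence then yields
\[
DE_4^{hG} \simeq \bigl(DE_4^{hG_0}\bigr)^{h\widetilde{\Gal}} \simeq \bigl(\Sigma^{112} E_4^{hG_0}\bigr)^{h\widetilde{\Gal}} \simeq \bigl(\widetilde{\Gal}_+ \smsh \Sigma^{112} E_4^{hG}\bigr)^{h\widetilde{\Gal}} \simeq \Sigma^{112} E_4^{hG},
\]
where the first equivalence uses Greenlees-Sadofsky Tate vanishing to identify $DE_4^{hG}$ with $(DE_4^{hG_0})^{h\widetilde{\Gal}}$ (i.e., that $D(-)$ and $(-)^{hH}$ interact well in the $\mK(h)$-local setting for finite $H$), and the last uses that $(\widetilde{\Gal}_+ \smsh X)^{h\widetilde{\Gal}} \simeq X$ for the free action. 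There is essentially no obstacle here — every ingredient is already in place — so the proof is purely formal and reduces to checking that the chain of equivalences is compatible with the shift, exactly as in the $C_4$ case.
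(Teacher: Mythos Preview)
Your proposal is correct and follows exactly the approach the paper takes: the paper's proof simply says ``This follows as in the proof of \cref{C4shiftshfpcor},'' and you have reproduced that argument verbatim with \cref{C8shift} in place of \cref{C4shifts}. The chain of equivalences, the use of \cite[Lemma~1.37]{bobkovagoerss}, and the restriction along $G_0\leq C_8$ are all identical to the paper's intended argument.
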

\begin{proof}
This follows as in the proof of \cref{C4shiftshfpcor}. 
\end{proof}

\subsection{Conclusions}

We end with some remarks on extending the results in this paper, particularly 
for the group $C_4$. 
The main observation is that if 
\[\tag{4.4}\label{shifteq}DE_h^{hC_{2^n}}\simeq\Sigma^{s_h(C_{2^n})}E_h^{hC_{2^n}},\] for some $s_h(C_{2^n})\in\Z$, 
then, by restriction, the same dualizing shift must hold for $E_h^{hC_{2^r}}$, for any $1\leq r \leq n$. 
Since at all heights, $DE_h^{hC_2}\simeq \Sigma^{-h^2}E_h^{hC_2}$ \cite{dualizingspheres}, 
we must have \[\tag{4.5}\label{shiftcondition}s_h(C_{2^n})\equiv -h^2 \mod p_h(C_2),\] where $p_h(C_2)=2^{h+2}$ is the 
integer periodicity from \cref{intperiodicity}. Furthermore, the second part of \cref{intperiodicity} implies
that \cref{shiftcondition}
strongly limits the possible values of $s_h$. For example, if \cref{shifteq} holds, then 
\[s_h(C_4) = -h^2 \text{ or} -h^2 + p_h(C_2).\] 
%Assuming a strong converse to \cref{oneuv} holds, 
%the slice differentials theorem of HHR \cite{hhr16} would imply that the answer 
%cannot be the former. 

%\begin{quest}\label{endq1}
%Suppose $u_{k\sigma_{2^n}}$ supports a differential in the slice spectral
%sequence of  $E_{C_{2^n}}(m)$ and let $h=2^{n-1}m$. 
%Does this imply that $E_h$ is not $(k-k\sigma_{2^n})$-periodic? 
%\end{quest}

%Given the proof of \cref{intperiodicity}, the following question seems related and interesting 
%in and of itself. 

%\begin{quest}\label{endq2}
%Is $p_h(C_{2^n})$ the smallest periodcity of $E_h^{hC_{2^n}}$?
%\end{quest}

It is reasonable to expect $E_{C_{2^n}}(m)$ to be $C_{2^n}$-equivariantly equivalent to $E_h$ after 
a suitable completion. Indeed, in \cite[Proposition~7.6]{lubintatemodels}, the authors show that 
after $\mK(h)$-localizing, the map underlying \cref{factorization} is a finite Galois extension. 
This, together with the slice differentials theorem of \cite{hhr16}, suggests that $s_h(C_4)\neq -h^2$.   

\begin{quest}\label{endq3}
Let $E_h$ be a Lubin-Tate spectrum at an even height $h$ viewed as a $C_4$-spectrum.
For which $h$ is $E_h$ $(V_h(C_4) + 2^{h+2})$-periodic?
\end{quest}

\appendix

% !TEX root = master.tex

\section{The dual of Lubin-Tate spectra nonequivariantly}\label{nonequivariatnappendix}

In this appendix we review the argument from \cite{StricklandGHduality} establishing the 
self-duality of Morava $E$-theory nonequivariantly.
Along the way we will prove a well-known result going back to Devinatz-Hopkins 
\cite{DevinatzHopkins99} establishing a useful spectral sequence for studying $\mK(h)$-local duality.
Our contribution is an exposition encompassing a general class of $\mK(h)$-local 
pro-Galois extensions, as well as the description of the homotopy groups of the dual of a Lubin-Tate 
spectrum associated to an algebraically closed field in \cref{hmtpyClosedEthy}, which to the 
knowledge of the author does not appear elsewhere in print. 
This appendix grew out of an attempt to use $E_h^{\R}$ as in \cref{rblt} as our model of 
$E$-theory throughout the paper. However, we conclude from \cref{hmtpyClosedEthy} that
the duality results in the main text do not apply to such Lubin-Tate theories directly. 

We begin with some generalities on $\mK(h)$-local pro-Galois extensions.
Let $A\to B$ be such a Galois extension with profinite Galois group $\cG = 
\lim_i \cG_i$ and let $A\to B_i$ be finite $\cG_i$-Galois extensions so that 
\[B\simeq L_{\mK(h)}\colim_i B_i.\] We work in $\spectra_{\mK(h)}$ where colimits and 
smash products must be $\mK(h)$-localized. The latter of these will be left notationally implicit. 
We begin by recalling some useful definitions and notation. 

\begin{sdefn}
    Let $B\in\spectra_{\mK(h)}$ and $S = \lim_i S_i$ a profinite set with $S_i$ finite. 
    The \emph{$\mK(h)$-local continous mapping spectrum} from $S$ to $B$ is defined as 
    \[F_c(S_+,B) = L_{\mK(h)}\colim_i F( (S_i)_+,B).\]
\end{sdefn}
    
\begin{sdefn}
    Let $A = \lim_i A_i$ and $S = \lim_i S_i$ be sets given as inverse limits. 
    The \emph{continuous mapping set} from $S$ to $A$ is defined as 
    \[\map_c(S_+,A) = \lim_i\colim_j \map( (S_j)_+,A_i).\]
\end{sdefn}

\begin{slem}\label{thegaloislemma}
    We have the following equivalences: 
    \begin{enumerate}[(a)]
    \item $B^{\smsh_A s} \simeq F_c(\cG^{s-1}_+,B)$ for any $s \geq 1$, and
    \item $F_A(B_i,B)\simeq B [\cG_i]$. 
    \end{enumerate}
    \noindent Furthermore, the equivalence in (b) can be taken to be equivariant 
    with respect to the $\cG$-action on the target on the left and the diagonal action on the right.
\end{slem}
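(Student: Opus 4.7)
The plan is to bootstrap both statements from the analogous facts for finite Galois extensions by passing through $\mK(h)$-local filtered colimits. The key input from finite Galois theory is that, for each layer $A \to B_i$, there is a natural equivalence $B_i \smsh_A B_i \simeq F(\cG_{i+}, B_i)$ of $B_i$-modules with $\cG_i$-action, which is essentially \cite[Proposition~6.4.7]{rognesgalois}. Iterating the two-factor case in one smash factor at a time, one gets $B_i^{\smsh_A s} \simeq F(\cG_i^{s-1}_+, B_i)$ for all $s\geq 1$.

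For part (a), I would first note that $- \smsh_A X$ is a left adjoint on $\mK(h)$-local $A$-modules in each variable and so preserves $\mK(h)$-local filtered colimits. Writing $B \simeq L_{\mK(h)}\colim_i B_i$ and distributing across all $s$ factors,
\[B^{\smsh_A s} \simeq L_{\mK(h)}\colim_{(i_1,\ldots,i_s)} B_{i_1}\smsh_A\cdots\smsh_A B_{i_s}.\]
A cofinality argument restricts this to the diagonal to yield $L_{\mK(h)}\colim_i B_i^{\smsh_A s} \simeq L_{\mK(h)}\colim_i F(\cG_i^{s-1}_+, B_i)$. On the other side, since $\cG_j^{s-1}$ is finite, $F(\cG_j^{s-1}_+, -)$ commutes with $\mK(h)$-local filtered colimits, so
\[F_c(\cG^{s-1}_+,B) = L_{\mK(h)}\colim_j F(\cG_j^{s-1}_+, B) \simeq L_{\mK(h)}\colim_{(i,j)} F(\cG_j^{s-1}_+, B_i),\]
and cofinality again identifies this with $L_{\mK(h)}\colim_i F(\cG_i^{s-1}_+, B_i)$. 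To close, I would check that the transition maps $B_i^{\smsh_A s} \to B_{i'}^{\smsh_A s}$ correspond under the finite-Galois equivalence to the obvious maps $F(\cG_i^{s-1}_+, B_i) \to F(\cG_{i'}^{s-1}_+, B_{i'})$ induced by $\cG_{i'}\twoheadrightarrow\cG_i$ and $B_i\to B_{i'}$.

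For part (b), the self-duality of a finite Galois extension gives $D_A B_i \simeq B_i$ \cite[Proposition~6.4.7]{rognesgalois}, hence
\[F_A(B_i, B) \simeq D_A B_i \smsh_A B \simeq B_i \smsh_A B.\]
Writing $B\simeq L_{\mK(h)}\colim_{j} B_j$ and applying a mixed version of the argument for (a), for $j\geq i$ we can identify $B_i\smsh_A B_j \simeq F(\cG_{i+}, B_j)$ using that $B_j$ is still a finite Galois extension of $A$ factoring through $B_i$. Commuting $F(\cG_{i+},-)$ through the $\mK(h)$-local filtered colimit then yields $B_i\smsh_A B \simeq F(\cG_{i+}, B) = B[\cG_i]$. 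For the equivariance, I would track the $\cG$-action through each equivalence: the self-duality isomorphism $D_A B_i \simeq B_i$ intertwines the natural action on the dual with the regular $\cG_i$-action on $B_i$ (viewed as a $\cG$-action via $\cG \twoheadrightarrow \cG_i$), and combining this with the $\cG$-action on the final $B$ produces precisely the diagonal action on $F(\cG_{i+}, B)$.

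The main obstacle will be the equivariance in (b): one must track how three separate actions (from the self-duality identification, from the smash product formula, and from the colimit interchange) combine, with care for the fact that the ``source'' $\cG$-action on $F(\cG_{i+}, B)$ factors through the quotient $\cG\twoheadrightarrow\cG_i$ while the ``target'' action comes directly from the Galois action on $B$. The colimit manipulations in (a) are routine but require being careful about the direction of the indexing, since we are passing between filtered colimits of finite-source mapping spectra and filtered colimits of smash-product spectra whose structure maps go in compatible but opposite-looking ways.
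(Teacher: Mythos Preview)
Your argument is correct, but the paper takes a somewhat different route in both parts. For (a), rather than reducing to the finite layers $B_i^{\smsh_A s}\simeq F(\cG_i^{s-1}_+,B_i)$ and then assembling via colimits and cofinality, the paper works directly with $B$: the base case $s=2$ is precisely the pro-Galois identification $B\smsh_A B\simeq F_c(\cG_+,B)$ from \cite[Equation~8.1.2]{rognesgalois}, and the inductive step just smashes one more copy of $B$ and unwinds the definition of $F_c$. This sidesteps the transition-map verification you flag as a potential obstacle. For (b), the paper is again more direct: it invokes \cite[Lemma~6.1.2(c)]{rognesgalois}, writing down the equivalence explicitly as the adjoint of the composite
\[(B_i\smsh(\cG_i)_+)\smsh_A B\to B_i\smsh_A B\to B\smsh_A B\to B,\]
from which the equivariance claim is immediate. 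Your detour through the self-duality $D_A B_i\simeq B_i$ and the computation of $B_i\smsh_A B$ works, but it requires tracking several actions through several equivalences (as you note), whereas the explicit adjoint map makes the $\cG$-equivariance transparent. In short: your approach buys a uniform ``reduce to finite, then colimit'' philosophy; the paper's buys brevity and avoids the bookkeeping you identify as the main obstacle.
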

\begin{proof}[Proof of~(a)]
    
    The case $s = 1$ is trivial and $s = 2$ is Equation 8.1.2 of \cite{rognesgalois}. 
    We then proceed by induction. Suppose $B^{\ox s-1}\simeq F_c(\cG^{s-2},B)$. 
    Then 
    \begin{equation*}
    \begin{split}
    B^{\smsh_A s} &\simeq F_c(\cG^{s-2},B)\smsh_A B\\
    &\simeq L_{\mK(h)}( \colim_i F(\cG_i^{s-2},B)\smsh_A B)\\
    &\simeq L_{\mK(h)}( \colim_i F(\cG_i^{s-2},B\smsh_A B))\\
    &\simeq L_{\mK(h)}(\colim_i F(\cG_i^{s-2},F_c(\cG,B)) )\\
    &\simeq L_{\mK(h)}(\colim_i\colim_j F(\cG_i^{s-2}\times \cG_j,B))\\
    &\simeq F_c(\cG^{s-1},B). \qedhere
    \end{split}
    \end{equation*}
\end{proof}
\begin{proof}[Proof of~(b)]
    This follows directly from \cite[Lemma~6.1.2(c)]{rognesgalois} where an explicit equivalence 
    is given as the adjoint of the composite map 
    \[(B_i\smsh(\cG_i)_+)\smsh_A B\to B_i\smsh_A B\to B\smsh_A B\to B. \qedhere\]
\end{proof} 

\begin{rmk}
We choose generalized Moore spectra $\{M_I\}$ as in \cite[Section~4]{hoveystrickland} so that 
$L_{\mK(h)} X = \lim_I L_h X\smsh M_I$, where $L_h(-)$ denotes localization at height $h$ Morava $E$-theory.
\end{rmk}

\begin{sdefn}
    We say $B$ \emph{satisfies ML} if for all $t\in\Z$, the inverse system of abelian groups,
    $\{\pi_t (B\smsh M_I)\}_I$, satisfies the Mittag-Leffler condition. 
\end{sdefn}

\begin{sthm}\label{thessthm}
    Let $A\to B$ be a $\mK(h)$-local pro-$\cG$-Galois extension which admits descent in the sense of 
    \cite[Definition~3.18]{mathewgalois} and suppose that $B$ satisfies ML. 
    There are conditionally convergent spectral 
    sequences of the form
    
    \begin{equation}\label{regss}
        E_2^{s,t}\cong H^s_{c}(\cG;B_t)\implies \pi_{t-s}A
    \end{equation}
    \noindent and 
    \begin{equation}\label{dualss}
    E_2^{s,t}\cong H^s_{c}(\cG;B_t\llbracket\cG\rrbracket)\implies \pi_{t-s}D_A B.
    \end{equation}
\end{sthm}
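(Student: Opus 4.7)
The plan is to construct both spectral sequences from the cosimplicial object furnished by the descent equivalence $A \simeq \Tot(B^{\smsh_A \bullet + 1})$, which is available by the descent hypothesis. Running the homotopy spectral sequence of this totalization gives a conditionally convergent spectral sequence with $E_1^{s,t} = \pi_t B^{\smsh_A s+1}$ converging to $\pi_{t-s} A$. For the dual, apply $F_A(B,-)$ termwise and use that this functor commutes with $\Tot$ to obtain $D_A B \simeq \Tot(F_A(B, B^{\smsh_A \bullet+1}))$, hence a conditionally convergent spectral sequence with $E_1^{s,t} = \pi_t F_A(B, B^{\smsh_A s+1})$ converging to $\pi_{t-s} D_A B$. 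The remaining task is to identify the $E_2$-pages.

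For the first spectral sequence, \cref{thegaloislemma}(a) gives $B^{\smsh_A s+1} \simeq F_c(\cG^s_+, B)$. For the second, combine \cref{thegaloislemma}(a) with the Hom--tensor adjunction $F_A(X, F_c(S_+, Y)) \simeq F_c(S_+, F_A(X, Y))$ to write $F_A(B, B^{\smsh_A s+1}) \simeq F_c(\cG^s_+, F_A(B, B))$, and then identify the coefficient spectrum by
\[
F_A(B, B) \simeq \lim_i F_A(B_i, B) \simeq \lim_i B[\cG_i] =: B\llbracket \cG \rrbracket,
\]
using \cref{thegaloislemma}(b) together with the presentation $B \simeq L_{\mK(h)}\colim_i B_i$. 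On $\pi_t$, this last limit yields $B_t \llbracket \cG \rrbracket$ with the diagonal action of $\cG$ (Galois on $B_t$, left-regular on $\cG_i$) as guaranteed by the equivariance in \cref{thegaloislemma}(b).

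Next I would pass homotopy groups through $F_c$. For any spectrum $X$, the ML hypothesis for the tower $\{\pi_t(X \smsh M_I)\}_I$ together with the Milnor sequence for $L_{\mK(h)} X = \lim_I L_h X \smsh M_I$ yields an isomorphism $\pi_t F_c(\cG^s_+, X) \cong \map_c(\cG^s, \pi_t X)$; indeed, the colimit over $j$ of the finite products $\prod_{\cG_j^s} \pi_t(X \smsh M_I)$ is exactly $\map_c(\cG^s, \pi_t X)$ after taking the Mittag-Leffler limit in $I$. Applying this to $X = B$ and to $X = B\llbracket\cG\rrbracket$ identifies the $E_1$-cosimplicial abelian groups with the standard inhomogeneous continuous cochain complexes for $\cG$ acting on $B_t$ and on $B_t\llbracket\cG\rrbracket$, respectively. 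Checking that the cosimplicial differential matches the usual coboundary then produces $E_2^{s,t} \cong H^s_c(\cG; B_t)$ and $E_2^{s,t} \cong H^s_c(\cG; B_t\llbracket\cG\rrbracket)$.

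The main obstacle is the bookkeeping required to pass homotopy groups through the iterated limits, colimits, and $\mK(h)$-localizations. The ML hypothesis is imposed only on $B$, so one must verify that it propagates to $F_c(\cG^s_+, B)$ and to $B\llbracket\cG\rrbracket$ (and then to $F_c(\cG^s_+, B\llbracket\cG\rrbracket)$); this reduces to showing the relevant inverse systems remain Mittag-Leffler under continuous products indexed by $\cG^s$ and under the Iwasawa-type limit defining $B\llbracket\cG\rrbracket$. A secondary subtlety is tracking the continuous $\cG$-actions on the coefficient groups; the equivariance clause in \cref{thegaloislemma}(b) is exactly what ensures that the module appearing in the dual spectral sequence is the twisted $B_t\llbracket\cG\rrbracket$ rather than the untwisted version, and this is what allows the second spectral sequence to genuinely compute $\pi_*D_A B$ rather than $\pi_* A$.
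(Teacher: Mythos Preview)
Your overall strategy matches the paper's, but the claimed ``Hom--tensor adjunction'' $F_A(X,F_c(S_+,Y))\simeq F_c(S_+,F_A(X,Y))$ is not a formal identity and is where the real work hides. Recall that $F_c(S_+,-)$ is by definition a $\mK(h)$-local \emph{filtered colimit}, and $F_A(B,-)$ has no reason to commute with such colimits since $B$ is not compact. The paper does not establish that spectrum-level interchange. Instead it expands the \emph{source} variable first: writing $B\simeq L_{\mK(h)}\colim_i B_i$ and using that each $B_i$ is dualizable (finite Galois over $A$), one gets
\[
F_A(B,F_c(\cG^{s-1}_+,B))\;\simeq\;\lim_i\,\lim_I\,\colim_j\,F\bigl((\cG_j^{s-1})_+,B[\cG_i]\bigr)\smsh M_I,
\]
with the limit over $i$ sitting outermost. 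The paper then computes $\pi_t$ of this iterated expression directly, never asserting your factorization through $F_c(\cG^s_+,B\llbracket\cG\rrbracket)$.

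The ML bookkeeping you defer to the end is therefore needed earlier than you indicate, and it is more specific than ``ML propagates.'' Two ingredients are used: Emmanouil's theorem that the Mittag--Leffler condition is stable under base change (to handle the $I$-limit), and an explicit surjectivity argument for the transition maps in the $i$-system (the map induced by $\cG_{i'}\to\cG_i$ on $\map(\cG_j^{s-1},(\pi_t B\smsh M_I)[\cG_{i}])$ is surjective, and the kernels again form an ML system so surjectivity survives the $I$-limit). This is exactly what justifies the interchange of $\lim_i$ with $\lim_I\colim_j$ that your adjunction tacitly assumes. Your last paragraph correctly flags the obstacle but locates it only at the stage of passing $\pi_t$ through $F_c$; in fact it already governs whether your step $F_A(B,F_c(\cG^s_+,B))\simeq F_c(\cG^s_+,F_A(B,B))$ is true.
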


\begin{proof}
    Since $A\to B$ admits descent, by 
    \cite[Proposition~3.22]{mathewgalois} we have an equivalence 
    \[A \simeq \text{Tot} B^{\smsh_A\bullet + 1}\] between $A$ and the totalization of the 
    $\mK(h)$-local Amitsur complex of $A\to B$. 
    
    Since $A\to B$ is Galois, by part (a) of \cref{thegaloislemma} we have an equivalence 
    \[B^{\smsh_A s}\simeq F_c(\cG^{s-1}, B) = L_{\mK(h)}\colim_i F(\cG_i^{s-1},B). \]
    Applying $\pi_t(-)$ to the tower 
    \[\cdots\to\text{Tot}_2 B^{\smsh_A\bullet +1}\to\text{Tot}_1 B^{\smsh_A\bullet +1}\to 
    \text{Tot}_0 B^{\smsh_A\bullet +1}\]
    then yields a spectral sequence of the form 
    \[\pi_tF_c(\cG, B)\implies \pi_tA.\] Using the assumption at $B$ is ML and that the
    Mittag-Leffler condition is stable under base-change \cite{emmanouil} we can identify the 
    $E_2$-page as in \cref{regss}.
    
    Now applying $\pi_t F_A(B,-)$ to the Tot-tower yields a spectral sequence of the form 
    \begin{equation}\label{ssform1}
    \pi_t F_A(B, F_c(\cG^{s-1},B))\implies \pi_{t-s} D_A B.
    \end{equation}
    We wish to identify this initial page as a group of continous cochains on $\cG$. 
    To this end, note that since $A\to B$ is a pro-$\cG$-Galois extension, 
    \[B \simeq L_{\mK(h)}\colim_{res,i} B_i,\] and each $A\to B_i$ is a finite Galois 
    extension (of Galois group $\cG_i$). It follows that  
    \begin{equation*}
    \begin{split}
    F_A(B,F_c(\cG^{s-1},B)) &\simeq F_A(L_{\mK(h)}\colim_{res,i} B_i,F_c(\cG^{s-1},B))\\
    &\simeq \lim_{D(res),i}F_A(B_i,F_c(\cG^{s-1},B)) \\
    &\simeq \lim_{D(res),i}L_{\mK(h)}\colim_j F_A(B_i,F(\cG_j^{s-1},B)) \\
    &\simeq \lim_{D(res),i}L_{\mK(h)}\colim_j F(\cG_j^{s-1},F_A(B_i,B)) \\
    & = \lim_{D(res),i}\lim_I \colim_j F(\cG_j^{s-1},B[\cG_i])\smsh M_I .
    \end{split}
    \end{equation*}
    
    \noindent We note that in the second equivalence above we use the fact that 
    for $\mK(h)$-local $Y$, $F(L_{\mK(h)}X, Y)\simeq F(X,Y)$.
    Now by \cite{emmanouil} since $\{\pi_t B\smsh M_I\}_I$ is ML, so is the base-changed system 
    \[\{\colim_j\map(\cG_j^{s-1},\Z[\cG_i])\ox_{\Z}\pi_t B\smsh M_I\}_I. \]
    Thus \[\pi_t\lim_I\colim_j F(\cG_j^{s-1},B[\cG_i])\smsh M_I\cong \lim_I\colim_j\map(\cG_j^{s-1},
    (\pi_t B\smsh M_I)[\cG_i]).\]
    Then note that for fixed $j,I$, and $i < i'$, the map 
    \[\map(\cG_j^{s-1},\pi_t(B\smsh M_I)[\cG_{i'}])\to\map(\cG_j^{s-1},\pi_t(B\smsh M_I)[\cG_i])\] is 
    just the map induced by the quotient $\cG_{i'}\to\cG_i$ and so 
    is surjective. The resulting map on colimits is surjective. It remains to check that the resulting map 
    after taking $\lim_I$ must also be surjective. This holds because the kernel of the map above 
    associated to $i< i'$ is given by 
    \[\map(\cG_j^{s-1},\pi_t(B\smsh M_I)[U_i/U_{i'}]).\] The resulting inverse system indexed on $I$ 
    is thus ML so that the $\lim^1_I$-term vanishes and the map on limit is surejctive.  
    This implies that the system 
    \[\{\lim_I\colim_j\map(\cG_j^{s-1},(\pi_t B\smsh M_I)[\cG_i])\}_i\] is ML and so 
    upon applying $\pi_t$, all $\lim^1$-terms vanish so that 
    \[\pi_t F_A(B,F_c(\cG^{s-1},B))\cong\map^c(\cG^{s-1},B^t\llbracket\cG\rrbracket)
    = \lim_{i,I}\colim_j\map(\cG_j^{s-1},(\pi_t B\smsh M_I)[\cG_i]).\]
    One then readily identifies the $d^1$-differential as the usual one so that 
    the spectral sequence \cref{ssform1} can be written as in \cref{dualss}.  
\end{proof}

\begin{srmk}
    We remark that the first spectral sequence in the theorem above was constructed
    in \cite[Main Theorem~B]{Li2023}. 
\end{srmk}
    
\begin{srmk}\label{modulermk}
    We emphasize that the coefficient $\cG$-module appearing in the $E_2$-page of the spectral 
    sequence constructed above is the topological $\cG$-module given by
    \[B_t\llbracket\cG\rrbracket = \lim_{i,I}\pi_t(B\smsh M_I)[\cG_i]\] with the 
    inverse limit topology and the diagonal $\cG$-action. Now fix a prime $p$. If each $\pi_t(B\ox M_I)$ is 
    a finite discrete $p$-torsion $\cG$-module, then we can identify this module as 
    the completed tensor product \[B_t\llbracket\cG\rrbracket \cong B_t\hat{\ox}_{\Z_p}\Z_p\llbracket\cG\rrbracket.\]
    In this case, via the usual shearing isomorphism, we can take the $\cG$-action on $B_t$ to be trivial and 
    that on $\Z_p\llbracket\cG\rrbracket$ the left multiplication. Then $B_t\llbracket\cG\rrbracket$ is 
    an induced module.
\end{srmk}

\begin{sdefn}
    We say $B$ is \emph{$\mK(h)$-locally profinite at $p$} if 
    for all $t, I$, $\pi_t(B\smsh M_I)$ is a finite discrete $p$-torsion $\cG$-module. 
\end{sdefn}

\begin{sthm}\label{thenonequivariantdualthm}
    Suppose $A\to B$ is a $\mK(h)$-local pro-$\cG$-Galois extension that admits descent, 
    and that $B$ is $\mK(h)$-locally profinite. Suppose additionally that
    $\cG$ is a virtually orientable Poincar\'e duality group at $p$ with $\text{vcd}_p\cG = d$, 
    then there is an equivalence of spectra \[D_A B\simeq \Sigma^{-d}B.\]
\end{sthm}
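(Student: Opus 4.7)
The plan is to reduce to the case where $\cG$ is itself orientable and then to collapse the spectral sequence of \cref{thessthm} using Poincar\'e duality. For the reduction, I would choose an open normal subgroup $\cH \trianglelefteq \cG$ of finite index which is orientable Poincar\'e duality at $p$ of dimension $d$. Then $A \to B^{h\cH}$ is a finite $\cG/\cH$-Galois extension, so $D_A B^{h\cH} \simeq B^{h\cH}$ by \cite[Proposition~6.4.7]{rognesgalois}, and the standard change-of-rings adjunction for the tower $A \to B^{h\cH} \to B$ gives
$$D_A B \simeq F_{B^{h\cH}}(B, D_A B^{h\cH}) \simeq F_{B^{h\cH}}(B, B^{h\cH}) = D_{B^{h\cH}} B.$$
Since $B^{h\cH} \to B$ is a pro-$\cH$-Galois extension that inherits the descent and $\mK(h)$-locally profinite hypotheses from $A \to B$, it suffices to prove the theorem for this orientable extension.

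Next, applying \cref{thessthm} and \cref{modulermk} to $B^{h\cH} \to B$ produces a spectral sequence
$$E_2^{s,t} \cong H^s_c(\cH; B_t \hat\otimes_{\Z_p} \Z_p\llbracket\cH\rrbracket) \implies \pi_{t-s} D_{B^{h\cH}} B,$$
where, after shearing, $\cH$ acts trivially on $B_t$ and by left multiplication on $\Z_p\llbracket\cH\rrbracket$. Orientable Poincar\'e duality at $p$ of dimension $d$ for $\cH$ identifies
$$H^s_c(\cH; \Z_p\llbracket\cH\rrbracket) \cong \begin{cases} \Z_p & s = d, \\ 0 & s \neq d.\end{cases}$$
Writing $B_t = \lim_I \pi_t(B \smsh M_I)$ as a limit of finite discrete $p$-torsion $\Z_p$-modules (using the profiniteness hypothesis and Mittag-Leffler for $B$), a completed K\"unneth computation then yields $E_2^{d,t} \cong B_t$ and $E_2^{s,t} = 0$ for $s \neq d$. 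The spectral sequence collapses with no room for extensions, giving isomorphisms $\pi_n D_{B^{h\cH}} B \cong B_{n+d}$ for all $n \in \Z$.

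To upgrade this homotopy-level calculation to a spectrum-level equivalence, the key observation is that $D_{B^{h\cH}} B = F_{B^{h\cH}}(B, B^{h\cH})$ is naturally a $B$-module via the $B$-action on the source copy of $B$. The computation above shows $\pi_\ast D_{B^{h\cH}} B$ is free of rank one over $B_\ast$ with generator in degree $d$, so choosing a generator corresponding to $1 \in B_0$ produces a $B$-module map $\Sigma^{-d} B \to D_{B^{h\cH}} B$ inducing an isomorphism on homotopy groups, hence a $\mK(h)$-local equivalence.

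The main obstacle is the completed K\"unneth formula used in the middle step: one must carefully commute continuous $\cH$-cohomology past the inverse limit presentation of $B_t$ and verify that the resulting Mittag-Leffler conditions eliminate all $\lim^1$ terms. The techniques developed in the proof of \cref{thessthm} itself, which handled analogous Mittag-Leffler issues to identify the $E_2$ page of the dualizing spectral sequence, should apply here with only minor modification.
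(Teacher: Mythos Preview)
Your proposal is correct and follows essentially the same architecture as the paper: reduce to an open normal orientable subgroup $\cH$, use the finite-Galois self-duality $D_A B^{h\cH}\simeq B^{h\cH}$ together with the change-of-rings adjunction to reduce to computing $D_{B^{h\cH}}B$, collapse the spectral sequence of \cref{thessthm} to a single line, and then promote the resulting isomorphism on homotopy to a $B$-module equivalence by choosing a generator.

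The one place where you diverge from the paper is in the identification of the $E_2$-page. You compute $H^s_c(\cH;\Z_p\llbracket\cH\rrbracket)$ first and then invoke a completed K\"unneth formula to pass to coefficients $B_t\hat\otimes_{\Z_p}\Z_p\llbracket\cH\rrbracket$; as you note, this requires care with Mittag--Leffler conditions. The paper avoids this entirely: it applies Poincar\'e duality directly to the full coefficient module, passing from $H^s_c(\cH;B_t\llbracket\cH\rrbracket)$ to $H_{d-s}(\cH;B_t\hat\otimes_{\Z_p}\Z_p\llbracket\cH\rrbracket)$, and then observes that (after shearing) this is an induced module, so Shapiro's lemma gives $H_{d-s}(1;B_t)$ on the nose. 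This sidesteps the K\"unneth issue completely and is the cleaner route; you may wish to replace your middle step with it.
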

    
\begin{proof} 
    Suppose $\cH\leq_o\cG$ is an open normal subgroup that is an orientable Poincar\'e duality 
    group of dimension $d$. Then $A\to B$ can be thought of as the composition 
    \[A\to B^{h\cH}\to B\] where the first map is a finite Galois extension and the second is a 
    pro-$\cH$-Galois extension. We begin by dualizing the latter. 
    
    First we note that arguing as in the proof of \cite[Proposition~3.2.6]{Li2023}
    one can show that $B^{h\cH}\to B$ admits descent.
    Consider the spectral sequence of \cref{thessthm}
    \[H^s_{c}(\cH;B_t\llbracket\cH\rrbracket)\implies \pi_{t-s}D_{B^{h\cH}} B. \]
    From the assumption that $\cH$ is a Poincare 
    duality group at $p$ we can rewrite the $E_2$-page as
    \begin{equation*}
    \begin{split}
    H^s(\cH;B_t\llbracket\cH\rrbracket) 
    &\cong H_{d-s}(\cH;B_t\llbracket\cH\rrbracket)\\
    &\cong H_{d-s}(\cH;B_t\hat{\ox}_{\Z_p}\Z_p\llbracket\cH\rrbracket)\\
    &\cong H_{d-s}(1;B_t) \\
    &\cong \begin{cases} B_t & s = d\\ 0 & \text{else}\end{cases}.
    \end{split}
    \end{equation*} The first isomorphism  above is Poincar\'e duality (see, for example, 
    \cite[Proposition~4.5.1]{SymondsWeigel2000}).The second isomorphism just follows from the 
    definition of the completed tensor product (see \cref{modulermk}), and the third isomorphism is Shapiro's Lemma. 
    
    Choosing a unit $u\in\pi_{-d}D_{B^{h\cH}} B\cong\pi_0 B$ and using the $B$-module structure gives 
    an equivalence of spectra \[D_{B^{h\cH}}B\simeq \Sigma^{-d}B.\] 
    
    Now since $A\to B^{h\cH}$ is a finite Galois extension, we have 
    \[D_A B^{h\cH}\simeq B^{h\cH}. \] Putting all this together using an adjunction, we have 
    \[D_A B \simeq F_{B^{h\cH}}(B, D_A B^{h\cH})\simeq D_{B^{h\cH}}B\simeq \Sigma^{-d}B. \qedhere\]
\end{proof}

\begin{rmk}
We remark that one should also be able to deduce this theorem from a general 
version of Clausen's \cref{LinearizationHypothesis}. 
\end{rmk}

\begin{sex}\label{finitefieldLTtheories}
    Fix a $p$-typical formal group law $\Gamma$ of height $h$ defined over $\F_p$. Let 
    $k$ be an algebraic extension of $\F_p$ and let
    $\estab = \estab(k,\Gamma)$ and $E = E_{(k,\Gamma)}$ be 
    the associated (extended) Morava stabilizer group, and Lubin-Tate spectrum. Note that 
    $E$ has coefficients abstractly isomorphic to 
    \[E_\ast \cong W(k)\llbracket u_1,...,u_{h-1}\rrbracket[u^\pm],\] where $W(k)$ denotes the 
    ring of Witt vectors of $k$, $|u_i| = 0$, and $|u| = -2$. 
    
    Consider the $\mK(h)$-local pro-$\estab$-Galois extension 
    \[L_{\mK(h)}S\to E.\] Mathew in \cite[Proposition~10.10]{mathewgalois} shows that this extension 
    admits descent. Since $E_tM_I\cong E_t/I$ and the morphisms in this system are 
    the quotient maps, we have that $E$ is $\mK(h)$-locally ML and so there is a spectral sequence 
    \[H^s(\estab;E_t\llbracket\estab\rrbracket)\implies\pi_{t-s}DE.\]
    
    Furthermore, if we assume that $k$ is a finite extension of $\F_p$ then 
    $E_t/I$ is in fact finite so that $E$ is $K$-locally profinite in this case. 
    Lastly, if we assume that $\Gamma$ has all of its automorphisms over $k$. Then 
    $\estab$ is known to be virtually an orientable Poincar\'e duality group of dimension 
    $h^2$. Thus by \cref{thenonequivariantdualthm} we have that under these assumptions 
    \[DE\simeq \Sigma^{-h^2}E.\] This is originally a result of Hopkins, and the proof here is 
    essentially the same as Strickland's in
    \cite[Proposition~16]{StricklandGHduality}.
  
\end{sex}

When $k$ is not a finite extension, we need to work a little harder. Let $\Gal = \Gal(\overline{\F}_p/\F_{p^h})$
and for $m$ a multiple of $h$, $\Gal^m = \Gal(\F_{p^m}/\F_{p^h})$. 
By the normal basis theorem for Galois extensions of local fields 
\cite[Theorem~6.1]{ringGal}, we can identify $W(\F_{p^m})\cong\map(\Gal^m,W(\F_{p^h}))$. 
Letting $m$ range through multiples of $h$, we then have
\[W(\overline{\F}_p)=(\colim_m W(\F_{p^m}))_p^\wedge\cong
\map_c(\Gal,W(\F_{p^h})).\] 
One can then work out that the continuous $W(\F_{p^h})$-dual is given by 
\[D^c_hW(\overline{\F}_p) = \map_c(W(\overline{\F}_p),W(\F_{p^n}))
\cong W(\F_{p^h})\llbracket\Gal\rrbracket.\]
In the theorem below, we compute the homotopy groups of the dual of the algebraically closed Morava $E$-theory, 
$E(\overline{\F}_p)$.
Given the homotopy groups, we conclude that the duality results in the main text do not directluy apply to 
this spectrum. Those results would imply that $E(\overline{\F}_p)$ is self-dual up to a $-h^2$ suspension shift, 
and although we do observe this shift in the homotopy groups, 
it is accompanied by a levelwise continuous dualization of the coefficients.

%Here's a proof that this is the dual. 
%\begin{equation*}
%\begin{split}
%D^c_{W(\F_{p^h})}W(\overline{\F}_p) &= \lim_{\ell}\colim_k \Hom_{W(\F_{p^h})}(W(\overline{\F}_p)/p^k,W(\F_{p^h})/p^\ell)\\
%&= \lim_\ell\colim_k\lim_m\Hom_{W(\F_{p^h})}(W(\F_{p^h})/p^k[\Gal^m],W(\F_{p^h})/p^\ell)\quad\text{(continuity of Hom)}\\
%&\cong \lim_\ell\colim_k\lim_m\Hom_{W(\F_{p^h})}(W(\F_{p^h})/p^k[\Gal^m],
%W(\F_{p^h})/p^k\otimes_{W(\F_{p^h})}W(\F_{p^h})/p^\ell)\quad\text{(taking $\ell < k$)}\\
%&\cong\lim_\ell\colim_k\lim_m W(\F_{p^h})/p^k[\Gal^m]\otimes_{W(\F_{p^h})}W(\F_{p^h})/p^\ell\quad\text{(self duality of finite Galois)}\\
%&\cong\lim_{\ell}\colim_k\lim_m (W(\F_{p^h})/p^k\otimes_{W(\F_{p^h})}W(\F_{p^h})/p^\ell)\otimes_\Z \Z[\Gal^m]\\
%&\cong \lim_\ell\lim_m W(\F_{p^h})/p^\ell[\Gal^m]\quad\text{(again taking $\ell < k$)}.
%\end{split}
%\end{equation*}

\begin{sthm}\label{hmtpyClosedEthy}
    Let $\Gamma$ be the height $h$ Honda formal group law, and for $k$ an algebraic extension of 
    $\F_{p^h}$, let $E(k)$ be the Lubin-Tate spectrum associated to the pair $(k,\Gamma)$. 
    Then the homotopy groups of $DE(\overline{\F}_p)$ are given by 
    \[\pi_\ast DE(\overline{\F}_p)\cong E(\F_{p^h})_{\ast+h^2}\hat{\ox}_{W(\F_{p^h})}D^c_h W(\overline{\F}_p).\]
\end{sthm}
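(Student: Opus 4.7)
The plan is to factor the pro-$\estab(\overline{\F}_p,\Gamma)$-Galois extension $L_{\mK(h)}\sphere \to E(\overline{\F}_p)$ through the intermediate Lubin-Tate spectrum $E(\F_{p^h})$ associated to the finite-residue-field pair $(\F_{p^h},\Gamma)$. The second map $E(\F_{p^h}) \to E(\overline{\F}_p)$ is a pro-$\Gal$-Galois extension for $\Gal = \Gal(\overline{\F}_p/\F_{p^h})$, with intermediate finite Galois extensions $E(\F_{p^h}) \to E(\F_{p^m})$ for $m$ ranging over multiples of $h$. Since $\F_{p^h}$ is finite and $\Gamma$ has all of its automorphisms there (the Honda formal group), \cref{finitefieldLTtheories} applies to give the self-duality $D_{L_{\mK(h)}\sphere}E(\F_{p^h}) \simeq \Sigma^{-h^2}E(\F_{p^h})$. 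Combining this with the standard adjunction $F_A(C,A) \simeq F_B(C, F_A(B,A))$ for the composable Galois extension $L_{\mK(h)}\sphere \to E(\F_{p^h}) \to E(\overline{\F}_p)$ yields
\[
DE(\overline{\F}_p) \;\simeq\; \Sigma^{-h^2}\,F_{E(\F_{p^h})}\bigl(E(\overline{\F}_p),\,E(\F_{p^h})\bigr).
\]

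Next, I would identify $E(\overline{\F}_p) \simeq L_{\mK(h)}\colim_m E(\F_{p^m})$, turning the function spectrum on the right into an inverse limit
\[
F_{E(\F_{p^h})}\bigl(E(\overline{\F}_p),\,E(\F_{p^h})\bigr) \;\simeq\; \lim_m F_{E(\F_{p^h})}\bigl(E(\F_{p^m}),\,E(\F_{p^h})\bigr).
\]
Each finite subextension $E(\F_{p^h}) \to E(\F_{p^m})$ is $\Gal(\F_{p^m}/\F_{p^h})$-Galois, hence self-dual by \cite[Proposition~6.4.7]{rognesgalois}, giving $F_{E(\F_{p^h})}(E(\F_{p^m}),E(\F_{p^h})) \simeq E(\F_{p^m})$. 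Under this identification the transition maps become the transfers, which on homotopy groups are $E(\F_{p^h})_\ast$-linearly induced from the trace maps $\trans_{W(\F_{p^{m'}})/W(\F_{p^m})}$ on Witt vectors; these are surjective, so the inverse system satisfies Mittag-Leffler and $\lim^1$ vanishes, yielding
\[
\pi_\ast DE(\overline{\F}_p) \;\cong\; \lim_{m,\,\trans} E(\F_{p^m})_{\ast+h^2}.
\]

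Finally, I would identify this limit with the claimed completed tensor product. The trace-form self-duality $\map_{W(\F_{p^h})}(W(\F_{p^m}),W(\F_{p^h})) \cong W(\F_{p^m})$ together with the description $W(\overline{\F}_p) \cong (\colim_m W(\F_{p^m}))^\wedge_p$ gives
\[
\lim_{m,\,\trans} W(\F_{p^m}) \;\cong\; \map_c(W(\overline{\F}_p),W(\F_{p^h})) \;=\; D^c_h W(\overline{\F}_p).
\]
Writing $E(\F_{p^m})_\ast \cong W(\F_{p^m}) \otimes_{W(\F_{p^h})} E(\F_{p^h})_\ast$ and combining with the limit above finishes the computation. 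The main technical obstacle is the last step: one must justify that the limit commutes with base change to the infinite-rank $W(\F_{p^h})$-algebra $E(\F_{p^h})_\ast$. I would handle this by reducing modulo powers of the maximal ideal $\mathfrak{m} = (p,u_1,\ldots,u_{h-1})$, where each quotient $E(\F_{p^m})_\ast/\mathfrak{m}^N$ is finitely presented over $W(\F_{p^h})/p^N$, interchanging limits at the finite level, and then reassembling by $\mathfrak{m}$-adic completion to produce the stated completed tensor product $E(\F_{p^h})_{\ast+h^2}\hat{\ox}_{W(\F_{p^h})} D^c_h W(\overline{\F}_p)$.
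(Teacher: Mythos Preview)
Your argument is correct and shares the same global architecture as the paper's: factor through $E(\F_{p^h})$, use the adjunction to reduce $DE(\overline{\F}_p)$ to $\Sigma^{-h^2}D_{E(\F_{p^h})}E(\overline{\F}_p)$, and then compute the relative dual. The difference lies in this last computation. You realize $D_{E(\F_{p^h})}E(\overline{\F}_p)$ as the inverse limit $\lim_{\trans,m} E(\F_{p^m})$---in effect applying \cref{dualtransferformula} directly to the pro-$\Gal$-extension $E(\F_{p^h})\to E(\overline{\F}_p)$---then pass to homotopy groups via a Mittag--Leffler argument based on surjectivity of the trace, and finally identify the result using trace-form self-duality of $W(\F_{p^m})$ over $W(\F_{p^h})$. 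The paper instead runs the descent spectral sequence of \cref{thessthm}: using that the extension is pro-trivial (so $E(\F_{p^m})\simeq F(\Gal^m_+,E(\F_{p^h}))$, the spectrum-level normal basis theorem), it recognizes $E(\overline{\F}_p)_\ast\llbracket\Gal\rrbracket$ as a coinduced $\Gal$-module, and Shapiro's lemma collapses the spectral sequence onto the single line $E(\F_{p^h})_\ast\llbracket\Gal\rrbracket$. The payoff of the paper's route is that the answer appears directly in the form $E(\F_{p^h})_\ast\llbracket\Gal\rrbracket \cong E(\F_{p^h})_\ast \hat{\ox}_{W(\F_{p^h})} D^c_h W(\overline{\F}_p)$, so the limit--tensor interchange you flag as the main technical obstacle is absorbed into the spectral-sequence formalism and never has to be confronted explicitly. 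Your route, by contrast, is more elementary in that it avoids the spectral sequence entirely, at the cost of that final $\mathfrak{m}$-adic completion step.
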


\begin{proof}
    The proof is based on the fact that $E(\F_{p^h})\to E(\overline{\F}_p)$ is a $\mK(h)$-local 
    pro-$\Gal$-Galois extension which is pro-trivial. This means in particular that 
    we have a $\Gal$-equivariant equivalence \cite[Lemma~1.37]{bobkovagoerss}
    \[E(\F_{p^m})\simeq F(\Gal^m_+, E(\F_{p^m})^{h\Gal^m})\simeq F(\Gal^m_+, E(\F_{p^h})).\] 
    We then have
    \begin{equation*}
    \begin{split}
        E(\overline{\F}_p) &\simeq L_{\mK(h)}\colim_m F(\Gal^m_+, E(\F_{p^h}))\\
        &\simeq \lim_I\colim_m F(\Gal^m_+,E(\F_{p^h})\smsh M_I).
    \end{split}
    \end{equation*}
    Now the inverse system $\{\colim_m\pi_tF(\Gal^m_+,E(\F_{p^h})\smsh M_I)\}_I$ consists of 
    surjective maps so we have \[E(\overline{\F}_p)_\ast\cong\map_{c}(\Gal,E(\F_{p^h})_\ast).\]
    
    Consider the spectral sequence 
    \[H^s(\Gal; E(\overline{\F}_p)_t\llbracket\Gal\rrbracket)\implies \pi_{t-s}D_{E(\F_{p^h})}E(\overline{\F}_p)\]
    of \cref{thessthm}. We can then identify the $\Gal$-module appearing in the $E_2$-page as
    \[E(\overline{\F}_p)_\ast\llbracket\Gal\rrbracket \cong\map_{c}(\Gal_+,E(\F_{p^h})_\ast\llbracket\Gal\rrbracket),\]
    where $\Gal$ acts by conjugation and the action on $E(\F_{p^h})_\ast\llbracket\Gal\rrbracket$ is 
    through the action on $\Gal$. We can shear this action onto the source making this a coinduced module 
    so that 
    \[H^s(\Gal;E(\overline{\F}_p)_t\llbracket\Gal\rrbracket)\cong H^s(1;E(\F_{p^h})_t\llbracket\Gal\rrbracket)
    \cong\begin{cases} E(\F_{p^h})_t\llbracket\Gal\rrbracket & s = 0 \\ 0 & \text{else} \end{cases}.\]
    %shearing detail
    %To be completely explicit, if $n | m$ we have a quotient map $\phi^m_n:\Gal^m\to\Gal^n$ and we can define 
    %a shearing isomorphism 
    %\[\map_c(\Gal^m_+,E(\F_{p^h})[\Gal^n])\to \map_c(\Gal^m_+,E(\F_{p^h})[\Gal^n])\]
    %\[f\mapsto sh(f),\] where $sh(f)(g) = \phi^m_n(g^{-1})\cdot f(g)$. This is an isomorphism 
    %of $\Gal^m$-modules where the source is given the diagonal action $f\mapsto \phi^m_n(h)fh^{-1}$ and the target 
    %the action $f\mapsto fh^{-1}$. Our map above is the limit-colimit of this isomorphism hence is also an isomoprhism 
    %as desired.
    Thus, we have 
    \[\pi_\ast D_{E(\F_{p^h})}E(\overline{\F}_p)\cong E(\F_{p^h})_\ast\llbracket\Gal\rrbracket\cong 
    E(\F_{p^h})_\ast\hat{\ox}_{W(\F_{p^h})}D_h^c W(\overline{\F}_p).\] 
    Using an adjuction, we have 
    \[DE(\overline{\F}_p)\simeq F_{E(\F_{p^h})}(E(\overline{\F}_p),DE(\F_{p^h}))
    \simeq \Sigma^{-h^2}D_{E(\F_{p^h})}E(\overline{\F}_p).\]
    The result follows.
\end{proof}

\bibliographystyle{amsalpha}
\bibliography{mduals}
%\printbibliography
\end{document}